\newcommand{\blind}{1}
\newcommand{\R}{\mathbb{R}}
\newcommand{\N}{\mathbb{N}}
\newcommand{\Z}{\mathbb{Z}}
\newcommand{\E}{\mathbb{E}}
\newcommand{\red}{\color{red}}
\newcommand{\Filt}{\mathcal{F}}
\newcommand{\Tdm}{\mathscr{T}}
\newcommand{\plim}{\xrightarrow{p}}
\newcommand{\dlim}{\xrightarrow{d}}
\def\intp[#1]{\lfloor #1 \rfloor}
\newcommand{\mnu}{v}  
\newcommand{\eor}{%
	\relax\ifmmode
	\eqno{%
		\triangleleft}
	\else
	\begingroup
	\hfill $\triangleleft$
	\endgroup
	\fi}
\newcommand{\add}[1]{\textcolor{red}{#1}}
\newcommand{\blue}[1]{#1} 
\newcommand{\erase}[1]{} 
\definecolor{purple}{cmyk}{0.1,0.9,0,0.4}
\theoremstyle{definition}
\newtheorem{theorem}{Theorem}
\newtheorem{lemma}{Lemma}
\newtheorem{proposition}{Proposition}
\newtheorem{assumption}{Assumption}
\newtheorem{remark}{Remark}
\newtheorem{example}{Example}
\author[1]{Fumiya Akashi}
\author[2]{Konstantinos Fokianos}
\author[3]{Junichi Hirukawa}
\affil[1]{University of Tokyo}
\affil[2]{University of Cyprus}
\affil[3]{Niigata  University}
	\renewcommand\AB@affilsepx{: \protect\Affilfont}
	\affil[ ]{Email}
	\renewcommand\AB@affilsepx{, \protect\Affilfont}
	\affil[1]{akashi@e.u-tokyo.ac.jp}
	\affil[2]{fokianos@ucy.ac.cy}
	\affil[3]{hirukawa@math.sc.niigata-u.ac.jp}
\title{Inference for Non-Stationary Heavy Tailed Time Series}
\date{
	Submitted: 7th July 2024}
\begin{document}
\maketitle
	
	\begin{abstract}
		\noindent
		We consider the problem of inference for linear autoregressive  heavy tailed  non-stationary
		time series. Using the framework of  time-varying infinite order moving average processes  we show that there   exists a suitable local approximation to a non-stationary heavy tailed process 
		by a stationary process with heavy tails. This enables us to introduce a local approximation-based estimator which estimates consistently time-varying parameters of the model at hand.  In addition,
		we  study a  robust self-weighting approach that enables proving  the asymptotic normality of the estimator without necessarily assuming finite variance for the underlying process. A multiplier bootstrap methodology is suggested for obtaining a consistent estimator of the variance for this new estimator. In addition, we give an approximate expression for its bias.  
		Empirical evidence favoring this approach is provided.

		\bigskip
		
		\noindent 
		\textbf{Keywords}: non-stationarity, infinite variance processes, local regression, least absolute deviation estimator, bias.   
		
		\pagebreak
	\end{abstract}

\section{Introduction}
\label{Sec:Intro}

Statistical theory for  time series analysis is based on the crucial assumption of stationarity which supports
development of appropriate asymptotic theory. It is well understood now that the assumption of stationarity  is often unrealistic  and is not  applicable to real data problems originating   from  finance, neuroscience or environmental studies, to mention a few. Non-stationarity is  observed in several  ways. For example, a simple time series plot  might  reveal  significant fluctuations  due to periodicity and/or external factors that influence  the observations. 
From a theoretical perspective, lack of stationarity poses important issues associated  with model identification and prediction. Furthermore,  asymptotic analysis is unsupported because any future information of the process does not necessarily provide  knowledge  about its  current state.  
These  problems persist  in applications. 
Especially after fitting an autoregressive model to data, the variance of  residuals  might be infinite. 


The goal of this work is to develop properties and inference for non-stationary processes with heavy-tailed innovations. An appropriate time series framework for this development is given by the notion of local stationarity, as introduced by 	\citet{dahlhaus1996kullback}. This framework has inspired several works  which have advanced theory and methodology for non-stationary time series but under the assumption of finite variance for the innovation sequence. To the best of our knowledge, the  important case of locally stationary processes 
with an  infinite variance innovation sequence has been neglected in the literature.   In this contribution, we fill 
this gap by introducing an infinite order time-varying moving average (MA) process with heavy-tailed innovations  whose coefficients change smoothly  over time.  


\subsection{Related work}
There exists a vast literature on models for heavy-tailed time series; see  {\cite{Embrechtsetal(1997)}}, \cite{Resnick(2007)} and   \cite{KulikandSoulier(2020)} for comprehensive reviews.  In the context of AR models with heavy-tailed innovations, \cite{davis1985limit, Davis1986a}  established the limiting distribution of the 
least squares estimator (LSE). Using point processes techniques, these authors proved  that the
LSE  converges weakly to a ratio of two stable random variables with the
rate $n^{1/\alpha}L(n)$, where $L(\cdot)$ is a slowly varying function, $n$ is the sample size and $\alpha \in (0,2)$ is the tail index of the innovation sequence.  Moreover, \cite{Davisetal(1992)}  developed asymptotic theory for 
$M$  and least absolute deviation estimators (LADE)  by proving that they converge weakly to the minimum of a stochastic process. The complicated form of the limiting distribution motivated \cite{Ling2005} to introduce 
a self-weighted LADE  and prove its asymptotic normality; see also \cite{Panetal(2007)} who study the  self-weighted LADE to the case of infinite variance autoregressive moving average (ARMA) models and show that the resulting estimators are asymptotically normally distributed.  In fact, various weighting schemes have been proposed over the years {to bypass issues associated with heavy tailed  data. All this methodology provides  appropriate choices  which} give consistent estimators  for the regression parameters and their respective asymptotic covariance matrix.  Some further recent related  work on  ARMA models 
with unspecified  heavy-tailed heteroscedastic noise is given by \cite{ZhuandLing(2015)}. 
Additional work on self-weighting has been reported by \citet{Akashi(2017), Akashietal(2018)}, among others.
This idea will be employed in the framework of locally stationary  processes which is a paradigm of non-stationary processes.

The multitude of data structures observed in applications makes the generalization of stationarity to non-stationarity implausible. 
The  first  attempt to deal with this problem was given by   \cite{priestley1965evolutionary} who 
considered processes  whose characteristics are changing slowly over time and developed the theory of ``evolutionary spectra''; for more see \cite{priestley1988non}.
However, such an approach does not support   asymptotic theory, which is needed for studying  estimation 
in this context. Other early approaches include the works by 	\cite{granger1964spectral} and 
\cite{tjostheim1976spectral}. 
To apply  asymptotic results  to  non-stationary processes, Dahlhaus, in a series of contributions, introduced an appropriate theoretical framework, based on the concept of local stationarity (see, for example,
\citet{dahlhaus1996kullback, Dahlhaus(1997),Dahlhaus(2000)}). The  definition of local stationarity relies  on the existence  of a time-varying spectral representation (\cite{dahlhaus1996kullback}).  Such representation though does not hold in the case we study. Basically,
local stationarity means that a process, over short periods, behaves ``approximately'' as a stationary process; see 
\cite{dahlhaus2012locally} who  gives an excellent and detailed overview of  locally stationary processes.

Locally stationary models
have been mainly studied  within a parametric context where {by utilizing smooth time-varying models}; see \citet{DahhausandRao(2006)} who analyze a class of ARCH (AR conditionally heteroscedastic) models with time-varying  coefficients.  Besides developing appropriate stability conditions, these authors have proposed kernel-based quasi-maximum likelihood estimation methods for non-parametric estimation of time-varying parameters.  Related works include that of  \cite{Frylewiczetal(2008)} who study
a kernel-based normalized LSE, \cite{HafnerandLinton(2010)} who provide estimation theory for a
multivariate GARCH model with a time-varying unconditional variance and \citet{Truquet(2017)} who 
develops semiparametric inference for time-varying ARCH models. 
Nonlinear autoregressive models with time-varying coefficient have been studied in detail by \cite{Vogt(2012)}, see also
\citet{Truquet(2019)} for locally stationary Markov models and \citet{Bardetetal(2022)} for infinite-memory time series models. 
A diffusion process with a time-dependent drift and diffusion function is investigated
in \cite{KooandLinton(2012)} and  \cite{DelemontandLaVecchia(2019)} study M--estimation  method for locally stationary diffusions observed at discrete time-points.

Additional references for locally stationary time series include the works by
\cite{neumann1997wavelet},
\cite{nason2000wavelet},
\cite{ombao2002slex},
\cite{sakiyama2004discriminant}, 
\cite{davis2006structural}, \cite{ZhouandWu(2009)}, \cite{Detteetal(2011)},  \cite{Zhou(2014)}, \cite{Giraudetal(2015)},
\cite{ZhangandWu(2015)},  \cite{WuandZhou(2017)}, \cite{MoysiadisandFokianos(2017)},
\cite{Detteetal(2019)}
among others.  The paper by   \cite{Dahlhausetal(2019)}
provides  general theory for  locally stationary processes by studying in detail   the stationary approximation and its derivative; see also  \cite{DahlhausandGiraitis(1998)} and 
\cite{RichterandDahlhaus(2019)} for works related  the problem of bandwidth selection.

\subsection{Contribution and outline}

Section \ref{Sec:tv-MA processes} introduces time-varying MA($\infty$) models with heavy-tailed innovations and states the necessary assumptions for local approximation of such processes. 
One of the main results of this section is given by Proposition \ref{Prop: stationary approx} which provides a local approximation of a time-varying process by a stationary process.
An additional approximation is proved  which is instrumental  for implementing the weighting scheme for estimation. Finally, Proposition \ref{Prop:equivalenve AR to MA} shows the existence of tvARMA models with heavy tailed innovations.  Section \ref{Sec:LocalPolynomialEstimation} focuses on tvAR($p$) models and develops estimation of the regression coefficients. Using the self-weighted scheme--which is a key idea in this study--and local
	polynomial  regression, Theorem \ref{thm:1} shows that the proposed method of estimation provides an asymptotically normally distributed estimator. The proofs of these results are based on Lemmas \ref{lem:3} and \ref{lem:4} which eventually imply that the proposed estimator 
	converges weakly to the minimum of a stochastic process. In addition, we propose a multiplier bootstrap methodology 
	which  provides a consistent estimator of the asymptotic variance of the estimator obtained by Theorem \ref{thm:1}; see Theorem \ref{thm:BS}. 
 Section \ref{Sec:Bias} is devoted explicitly to the calculation of the bias for the proposed estimation method. 
	The expansion of the objective function involves highly complicated terms so Proposition \ref{remarkA1} shows that we can approximate it by another objective function which provides a basis to quantify the bias of the proposed estimator. An exact  statement is given by Theorem \ref{theoremA1}. 
The paper concludes with a limited simulation study and a data analysis example;  see Sections \ref{sec:Simulations} and \ref{sec:Data analysis}. Finally, the 
{Appendix}  contains the proofs of results proved throughout the paper.

\paragraph{Notation} Throughout this paper, $0_k$, $O_{k\times l}$ and $I_k$ denote, respectively, a $k$-dimensional zero vector, a $k\times l$-dimensional zero matrix and a $k$-dimensional identity matrix.
The set of all integers, positive integers and real numbers are denoted by $\Z$, $\N$ and $\R$, respectively. The notation 
$\N_{0}$ denotes the set of non-negative integers. 
For any sequence of random vectors $\{A(t): t\in \Z\}$, $A(t)\plim A$ and $A(t)\dlim A$ denote the convergence to a random (or constant) vector $A$ in probability and law, respectively.
The transpose and the conjugate transpose of any matrix $M$ are denoted by $M^\top$ and $M^*$, respectively.
For any vector $x$, its Euclidean norm is denoted by $\|x\|$ and for any matrix $M$, we define $\| M\| = \sqrt{\mathrm{tr}(M^\top M)}$.
{$\intp[x]$ denotes the integer part of a real number $x$.} The notation $\mathrm{sign}(x)$ represents the sign function.

\section{ Local stationary  approximation of time-varying MA($\infty$) models with heavy-tailed innovations}
\label{Sec:tv-MA processes}

We define a  linear time-varying model with heavy-tailed innovations. We  study its local  stationary approximation, in the sense of
\citet{Dahlhaus(1997), Dahlhaus(2000)}, by showing that there exists a class of stationary linear MA($\infty$), in some neighborhood of a fixed time point which is, in some sense,  close to
the non-stationary process.
This new   result,   does not guarantee  thorough  study of  non-parametric estimation methods when applied to  data with heavy-tailed innovations. Therefore
a novel  approximation is additionally proved.  
Though some assumptions used next  are in the spirit of previous contributions, the methodology we employ is novel and deals with heavy-tailed processes. 
In what follows, \blue{we call a function $L:(0,\infty)\to(0,\infty)$ slowly varying if 
$\lim_{s\to\infty}L(sx)/L(s) = 1$ for all $x>0$.} we assume that	
\blue{
\begin{assumption}
	\label{ass:1}
	\begin{enumerate}[label = (\roman*)]
		\item $\{\epsilon_t:t\in\Z\}$ is a sequence of i.i.d.\ random variables;
		\item there exists $\alpha\in(0,2)$ and a slowly varying function $L(x)$ such that $P(|\epsilon_t|>x) = x^{-\alpha}L(x)$; 
		\item the median of 
		$\epsilon_{t}$  is zero.
	\end{enumerate}
\end{assumption}
}
It is well known that, if  $\{c_j: j\in\N\}$ is  a sequence  of real numbers satisfying
$\sum^{\infty}_{j=0} |c_j|^\delta < \infty$,   for some $\delta\in(0,\min\{1,\alpha\})$, then
\blue{Assumption \ref{ass:1} (i) and (ii)} implies that  the series
$\blue{A_t}:=\sum^{\infty}_{j=0}c_j|\epsilon_{t-j}|$ exists a.e.\ (see  \cite{cline1983estimation}, \cite{Davis1986a}). This fact will be used several times. 
\blue{For example, we bound higher-order terms of the expansion of some statistics by employing  $\sum^{n}_{t=p+1}A_t$.
Calculation of the stochastic order of $\sum^{n}_{t=p+1}A_t$ relies on  Theorem 4.1 of \cite{davis1985limit}. Note also that the condition (1.2) of \cite{davis1985limit} is automatically satisfied for $Z_t = |\epsilon_t|$. 
On the other hand, Assumption \ref{ass:1} (iii) 
provides  asymptotic unbiasedness of the estimator.
}

\noindent 
Following \citet{Dahlhaus(1997), Dahlhaus(2000)}, define a non-stationary linear process by  
\begin{align}
	Y_{t,T} = \sum_{j=0}^{\infty} \psi_{j,t,T} \epsilon_{t-j},
	\label{def: Def nonstationary process}
\end{align}
where $\{\epsilon_t:t\in\Z\}$ is a process satisfying  Assumption \ref{ass:1}. We call the  sequence of stochastic process $\{Y_{t,T} : t=1,\ldots,T\}$
defined by \eqref{def: Def nonstationary process},  a time-varying MA($\infty$) process with heavy-tailed innovations.
Suitable assumptions for  the time-varying functions $(\psi_{j, t,T})$
will be given below to ensure that \eqref{def: Def nonstationary process} is well-defined. Initially, we  show that \eqref{def: Def nonstationary process}
is approximated by  the stationary process
\begin{align}
	Y_{t}(u) = \sum_{j=0}^{\infty} \psi_{j}(u) \epsilon_{t-j},
	\label{def: Def stationary process}
\end{align}
for \blue{each} $u \in (0,1)$ such that $| t/T-u | < T^{-1}$ and $\psi_{j}(\cdot)$ are  suitable functions defined on  $[0,1]$ and taking values in $\mathbb{R}$.
This implies that the processes $Y_{t,T}$ and $Y_{t}(u)$ behave similarly  provided that $t/T$ is close to $u$ and the approximation should
depend on $T$ and $| t/T-u |$.  This construction allows rescaling
the time-varying parameters  $\psi_{j,t,T}$ to the unit interval $[0,1]$ assuming that there exist functions $\psi_{j}(\cdot): [0.1]\rightarrow \mathbb{R}$
such that  $\psi_{j,t,T} \approx \psi_{j}(t/T)$, in some sense. 
The reasons for the rescaling are described in detail, e.g. in
\cite[Sec.2]{dahlhaus2012locally}. 
To prove the validity of the local  approximation we assume the following conditions.

\begin{assumption}\label{ass:2} 
	There exists a sequence $\{l_j: j\in\N_{0}\}$ such that
	$\sum^{\infty}_{j=0}(l_j^{-1})^\delta<\infty$ for some $\delta\in(0,\min\{1,\alpha\})$,
	where $\alpha$ has been defined in Assumption \ref{ass:1}.
	Moreover, $\{\psi_{j,t,T}: j\in\N_{0}, t=1,...,T\}$ and \blue{$\{\psi_{j}(u): j\in\N_{0}\}$}
	satisfy the followings conditions:
	(i) $\sup_{t=1,...,T} |\psi_{j,t,T}| \leq  \blue{C_0}/l_j$, \blue{$|\psi_{j}(u)| \leq  \blue{C_0}/l_j$};
	(ii) $\sup_{t=1,...,T} \left|\psi_{j,t,T} - \psi_{j}(t/T)\right|\leq \blue{C_0}/(T l_j)$; 
	$|\psi_j(u)-\psi_j(v)|\leq \blue{C_0}|u-v|/l_j$,
	where $\blue{C_0}$ is a positive constant which does not depend of $T$.
\end{assumption}

To develop estimation, we further study 
a non-stationary  \textit{self-weighting} process defined by    $w_{t-1,T}:=g(X_{t-1,T})$,
where $g: \R^p\to\R^+$ is a positive, measurable function of $X_{t-1,T}$ defined as
$X_{t -1,T} = (Y_{t -1,T},...,Y_{t-p,T})^\top$. Analogously, let us  denote by
$w_{t-1}(u):=g(X_{t-1}(u))$,  where we  set 
$X_{t -1}(u) = (Y_{t -1}(u),...,Y_{t -p}(u))^\top$ its corresponding stationary approximation, assuming that it exists.
Given  a suitable weight function $g(\cdot)$, we implement non-parametric estimation of a  regression model by
downweighting  unusual   observations; see also \citet{Ling2005}.   
We assume that the function $g(\cdot)$ satisfies the following assumption:

\begin{assumption}\mbox{}\label{ass:3} 
	$g: \R^p\to\R^+$ is a positive and measurable function that satisfies 
	$M := \sup_{x\in\R^p}  \left[g(x)\blue{\left(1+ \|x\|^3\right)} +  \|g'(x)\| \left(\|x\| + \|x\|^2\right)\right]<\infty$.
\end{assumption}
\blue{
We give  some examples of  weight functions $g(\cdot)$ satisfying Assumption \ref{ass:3}. 
	In what follows, suppose that $x \in \R^p$.}
	\begin{example}
		\label{eg:1}
		Define
		\begin{align}
			g(x) := \frac{1}{(1+c\|x\|^2)^{3/2}},\label{eq:w1}
		\end{align}
		where $c$ is some  positive constant.
		This weight function was proposed by \cite{Ling2005} and it holds that \blue{$g(x) \leq 1$} for all $x$.
		As $\|x\|\to\infty$, we obtain  the  approximation $g(x) \sim (\sqrt{c}\|x\|)^{-3}$.
		Then, \blue{$g(x)\|x\|^3$} is bounded by a finite constant uniformly in $x\in\R^p$.
		It is also   easy to see that  \blue{$\|g'(x)\|(\|x\|+\|x\|^2)$} is bounded 
		by a constant uniformly in $x\in\R^p$. Hence   Assumption \ref{ass:3} is satisfied. 
	\end{example}
	\begin{example}\label{eg:2}
		A natural example of weight function is given by the indicator  function $\mathbb{I}(\|x\|<c)$  where $c$ is  some positive constant.
		Clearly, this weight function does not satisfy Assumption \ref{ass:3}
		because it is not differentiable at the point $x$ with  $\|x\|=c$. 
		Redefine 
		the indicator functions by employing their  smooth versions. Let
		\begin{align}
			g(x) := J(c-\|x\|),\label{eq:w3}
		\end{align}
		where
		\begin{align}
			&J(u) = \begin{cases}
				0 & (u\leq -1)\\
				-0.25u^3 + 0.75u + 0.5 & (-1<u\leq 1)\\
				1 & (1<u)
			\end{cases}\notag
		\end{align}
		and $c$ is a finite constant.
		Obviously, this weight function satisfies Assumption \ref{ass:3}. Several other examples are given by \citet{Ling2005} and \citet{Panetal(2007)}; see Section \ref{sec:Simulations} for an additional example provided by the weight function \eqref{eq:weightsPenetal}.
\end{example}

The main result of this section is the following proposition whose proof is postponed to the appendix.
\begin{proposition}
	\label{Prop: stationary approx}
	\begin{enumerate}
		\item  Suppose that Assumptions \ref{ass:1} and \ref{ass:2} hold true \blue{for $u=u_0\in(0,1)$}.
		Assume that $\{Y_{t,T}: t=1,\ldots, T \}$ is a time-varying MA($\infty$) process defined by \eqref{def: Def nonstationary process}
		and let $ \{Y_{t}(u): t=1,\ldots,T\}$ be defined as in \eqref{def: Def stationary process}. Then both processes are well defined
		and if \blue{$t$ satisfies $| t/T-u_0 | < T^{-1}$}, then
		\begin{align*}
			| Y_{t,T}- Y_{t}\blue{(u_0)}| \leq  \blue{C_0}\left(\left| \frac{t}{T}-\blue{u_0}\right|+\frac{1}{T} \right)  \sum_{j=0}^{\infty} \frac{1}{l_{j}} |\epsilon_{t-j}|.
		\end{align*}
		\item In addition, suppose that Assumption \ref{ass:3} holds.
		Then, if $\{w_{t-1,T}\}$ and $\{w_{t-1}(u_0)\}$ are defined
		as above, then for \blue{$t$ satisfying $| t/T-u_0 | < T^{-1}$},
		\begin{align*}
			\|w_{t-1,T}X_{t-1,T} - w_{t-1}\blue{(u_0)} X_{t-1}\blue{(u_0)}\| \leq \left(
			\left|\frac{t}{T}-\blue{u_0}\right| + \frac{1}{T}
			\right){ 2\blue{C_0}M}\sum^{p}_{i=1}\sum^{\infty}_{j=0} \frac{1}{l_{j}}|\epsilon_{t-i-j}|.
		\end{align*}
	\end{enumerate}
	
\end{proposition}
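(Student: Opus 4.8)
The plan is to prove the second statement by reducing the weighted difference to the unweighted vector difference $X_{t-1,T}-X_{t-1}(u_0)$, to which the argument of part (1) applies coordinatewise. The reduction rests on showing that the map $\Phi:\R^p\to\R^p$, $\Phi(x):=g(x)\,x$, is globally Lipschitz with constant $2M$ under Assumption \ref{ass:3}. Granting this, writing $a:=X_{t-1,T}$ and $b:=X_{t-1}(u_0)$ gives $\|w_{t-1,T}X_{t-1,T}-w_{t-1}(u_0)X_{t-1}(u_0)\|=\|\Phi(a)-\Phi(b)\|\le 2M\,\|a-b\|$, and it remains to bound $\|a-b\|$ pathwise.

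First I would establish the Lipschitz bound. The naive splitting $\Phi(a)-\Phi(b)=g(a)(a-b)+(g(a)-g(b))\,b$ is unsuitable: the mean value theorem applied to the scalar $g(a)-g(b)$ produces $g'(\xi)$ at an intermediate point $\xi$, whereas the factor $\|b\|$ is evaluated at $b$, and Assumption \ref{ass:3} only controls the product $\|g'(x)\|\,\|x\|$ at a common point $x$. I would therefore treat $\Phi$ as a whole and use the integral form of the mean value theorem, $\Phi(a)-\Phi(b)=\int_0^1 D\Phi\big(b+s(a-b)\big)(a-b)\,ds$, with Jacobian $D\Phi(x)=g(x)\,I_p+x\,g'(x)^\top$. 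Bounding the spectral norm of the rank-one update gives $\|D\Phi(x)\|_{\mathrm{op}}\le g(x)+\|x\|\,\|g'(x)\|$, and Assumption \ref{ass:3} yields $g(x)\le M$ and $\|x\|\,\|g'(x)\|\le M$ for every $x$, hence $\|D\Phi(x)\|_{\mathrm{op}}\le 2M$ uniformly. Using the operator norm here, rather than the Frobenius norm of the paper, is what keeps the constant dimension-free.

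Next I would bound $\|a-b\|$. Since the Euclidean norm is dominated by the $\ell^1$ norm of the coordinates, $\|X_{t-1,T}-X_{t-1}(u_0)\|\le\sum_{i=1}^p|Y_{t-i,T}-Y_{t-i}(u_0)|$. Re-running the computation of part (1) at each time index $t-i$ — legitimate because the coefficient bounds of Assumption \ref{ass:2} hold uniformly over all time indices — gives, via $|\psi_{j,t-i,T}-\psi_j(u_0)|\le|\psi_{j,t-i,T}-\psi_j((t-i)/T)|+|\psi_j((t-i)/T)-\psi_j(u_0)|\le C_0 l_j^{-1}(1/T+|(t-i)/T-u_0|)$, the per-lag estimate $|Y_{t-i,T}-Y_{t-i}(u_0)|\le C_0\big(|(t-i)/T-u_0|+1/T\big)\sum_{j=0}^\infty l_j^{-1}|\epsilon_{t-i-j}|$. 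In the regime $|t/T-u_0|<T^{-1}$ the lagged offsets obey $|(t-i)/T-u_0|\le|t/T-u_0|+i/T$, so for the fixed finite order $p$ all lags sit within $O(1/T)$ of $u_0$ and collapse into the common factor $|t/T-u_0|+1/T$. Summing over $i$ produces the double sum $\sum_{i=1}^p\sum_{j=0}^\infty l_j^{-1}|\epsilon_{t-i-j}|$, and multiplying by the Lipschitz constant $2M$ delivers the asserted bound with constant $2C_0M$.

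The main obstacle is the Lipschitz step, and specifically the behaviour of $g'$ near the origin: $\|g'(x)\|$ may diverge as $\|x\|\to0$, so it cannot be bounded by a constant. Assumption \ref{ass:3} is tailored precisely to this, forcing $\|g'(x)\|\,\|x\|$ to remain bounded both as $\|x\|\to0$ and as $\|x\|\to\infty$, and the integral mean value argument exploits this by keeping the derivative and the point at which it is evaluated aligned. The remaining ingredient — that both processes are almost surely well defined, so the inequalities hold pathwise — follows from the summability $\sum_j (l_j^{-1})^\delta<\infty$ of Assumption \ref{ass:2} together with the a.e.\ existence of $\sum_j c_j|\epsilon_{t-j}|$ recalled after Assumption \ref{ass:1}, exactly as in part (1).
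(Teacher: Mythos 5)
Your proposal is correct and follows essentially the same route as the paper: a mean-value/Taylor bound on the map $x\mapsto g(x)x$ (the paper's $h$, your $\Phi$) reduces the weighted difference to $\|X_{t-1,T}-X_{t-1}(u_0)\|$, which is then bounded coordinatewise by the argument of part~(1). If anything, your integral mean-value step with the operator-norm bound $\|D\Phi(x)\|_{\mathrm{op}}\le g(x)+\|x\|\,\|g'(x)\|\le 2M$ is more careful than the paper's, which loosely writes $\sup_{x}\|h'(x)\|=M$ while still quoting the constant $2C_0M$ in the statement; both treatments share the same mild looseness in absorbing the lag offsets $i/T$ into the common factor $|t/T-u_0|+1/T$.
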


The last result of this Section proves  the  link between  the  non-stationary model \eqref{def: Def nonstationary process} and  time-varying ARMA (tvARMA)
process; see   \citet[Prop. 2.4]{DahlhausandPolonik(2009)} and \citet{Giraudetal(2015)}. Following the discussion of  \citet{DahlhausandPolonik(2009)}, 
tvARMA models can be represented in terms
of the non-stationary process \eqref{def: Def nonstationary process} but they cannot be expressed in the form of
``$Y_{t, T} = \sum_{j=0}^{\infty} \psi_{j}(t/T) \epsilon_{t-j}$''.
\blue{For example, consider tvAR(1) model $Y_{t,T} = \beta_1(t/T)Y_{t-1,T} + \epsilon_t$ with $\sup_{u\in[0,1]}|\beta_1(u)|<1$. By following \cite{DahlhausandPolonik(2009)} and \citet[Proposition 13.3.1]{BrockwellandDavis(1991)}
	with Assumptions \ref{ass:1} and \ref{ass:2}, 
	we have a representation
	\begin{align}
		Y_{t,T}=\sum^{\infty}_{l=0}\left(\prod^{l-1}_{k=0}\beta_1\left(\frac{t-k}{T}\right)\right)\epsilon_{t-l},\label{eq:X_tT}
	\end{align}
	and the coefficient $\psi_{l,t,T} = \prod^{l-1}_{k=0}\beta_1((t-k)/T)$ can not represented in the form of $\psi_j(t/T)$.
	In other words, $Y_{t,T}$ is not exactly represented as the right hand side of \eqref{def: Def stationary process} in general, but is approximated by a stationary process $Y_t(u)$ in a certain sense.}
To prove the result we employ again Assumption \ref{def: Def nonstationary process}.
\begin{proposition}\label{prop:2}
	\blue{Suppose that $\{\epsilon_t: t\in\Z\}$ satisfies Assumption \ref{def: Def nonstationary process}. }
	Consider the tvARMA model defined by
	\begin{align*}
		\blue{Y_{t,T} = \sum_{j=1}^{p} \beta_{j}\left(\frac{t}{T}\right) Y_{t-j, T} + \sum_{k=0}^{q} \gamma_{k}\left(\frac{t}{T}\right)\epsilon_{t-k}},
	\end{align*}
	where 
	$\gamma_{0}(u)=1$ and for $u <0$, $\beta_{j}(u)=\beta_{j}(0)$ for $j=1,2\ldots, p$ and
	$\gamma_{k}(u)=\gamma_{j}(0)$, $k=1,2,\ldots, q$. Suppose that all functions $\beta_{j}(\cdot)$ and $\gamma_{k}(\cdot)$ are Lipschitz and $\sum_{j=0}^{p} \beta_{j}(u) z^{j} \neq 0$ for
	all $u$ and $0 < |z| \leq  1+\delta$ for a positive $\delta$. Then there exists solution of the form \eqref{def: Def nonstationary process} which satisfies the conditions of Assumption \ref{ass:2}.
	\label{Prop:equivalenve AR to MA}
\end{proposition}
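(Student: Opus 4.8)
The plan is to pass to the companion (state–space) form of the tvARMA recursion and to identify the coefficients $\psi_{j,t,T}$ with entries of products of companion matrices; existence of the MA($\infty$) solution and the bounds of Assumption \ref{ass:2} then reduce to a uniform geometric decay estimate for such products. First I would set $\xi_{t,T}:=\sum_{k=0}^{q}\gamma_k(t/T)\epsilon_{t-k}$, collect $X_{t,T}:=(Y_{t,T},\dots,Y_{t-p+1,T})^\top$, and rewrite the recursion as the first–order system $X_{t,T}=B(t/T)X_{t-1,T}+e_1\xi_{t,T}$, where $B(u)$ is the $p\times p$ companion matrix of $(\beta_1(u),\dots,\beta_p(u))$ and $e_1$ is the first unit vector (the boundary convention $\beta_j(u)=\beta_j(0)$, $\gamma_k(u)=\gamma_k(0)$ for $u<0$ makes $B((t-k)/T)$ and $\gamma_k((t-l)/T)$ well defined for all $k,l\ge 0$). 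Unfolding the recursion gives $Y_{t,T}=e_1^\top\sum_{l\ge 0}\Phi(t,l)\,e_1\,\xi_{t-l,T}$ with $\Phi(t,l):=\prod_{k=0}^{l-1}B((t-k)/T)$ and $\Phi(t,0)=I$; reading off the coefficient of $\epsilon_{t-m}$ yields the explicit formula $\psi_{m,t,T}=\sum_{l+k=m,\,0\le k\le q}e_1^\top\Phi(t,l)e_1\,\gamma_k((t-l)/T)$. The stationary coefficients in \eqref{def: Def stationary process} are defined by freezing the argument, $\psi_m(u)=\sum_{l+k=m,\,0\le k\le q}e_1^\top B(u)^l e_1\,\gamma_k(u)$, equivalently through the generating function $\sum_j\psi_j(u)z^j=\Gamma(u,z)/B(u,z)$ with $\Gamma(u,z)=\sum_{k=0}^q\gamma_k(u)z^k$ and $B(u,z)$ the AR characteristic polynomial.

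The analytic (frozen) half is routine. Since, by hypothesis, $B(u,z)\neq 0$ on $\{|z|\le 1+\delta\}$ for every $u$, and the coefficients are Lipschitz on the compact set $[0,1]$, we have $\inf_{u,\,|z|=1+\delta/2}|B(u,z)|>0$; Cauchy's estimates applied to $\Gamma(\cdot)/B(\cdot)$ on the circle $|z|=1+\delta/2$ then give $|\psi_j(u)|\le C_0\rho^{\,j}$ with $\rho:=(1+\delta/2)^{-1}\in(0,1)$, uniformly in $u$. Differentiating the generating function in $u$ (the coefficients of $\partial_u\Gamma$ and $\partial_u B$ are bounded by the Lipschitz constants) and repeating the Cauchy estimate gives $|\psi_j(u)-\psi_j(v)|\le C_0\rho^{\,j}|u-v|$. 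Choosing $l_j:=\rho^{-j}$, so that $1/l_j=\rho^{\,j}$ and $\sum_j(1/l_j)^\delta<\infty$ automatically, this already delivers the parts of Assumption \ref{ass:2} concerning $\psi_j(\cdot)$, and, together with $\sum_j|\psi_{j,t,T}|^\delta<\infty$ from the uniform bound established below, the a.s.\ convergence of the series \eqref{def: Def nonstationary process} via the criterion recalled after Assumption \ref{ass:1}.

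The substance is the uniform geometric control of the non-stationary products $\Phi(t,l)$, which I expect to be the main obstacle. The difficulty is that each $B(u)$ having spectral radius at most $(1+\delta)^{-1}<1$ does not, by itself, force the products $B(u_1)\cdots B(u_l)$ to decay geometrically, since the joint spectral radius of a family of individually stable matrices can exceed one. What saves the argument is that the $B(u)$ form a one–parameter Lipschitz family whose characteristic polynomials share the common zero–free region $\{|z|\le 1+\delta\}$; following \citet[Prop.~2.4]{DahlhausandPolonik(2009)} and \citet[Proposition 13.3.1]{BrockwellandDavis(1991)} one obtains constants $C>0$ and $\rho\in(0,1)$ with $\|B(u_1)\cdots B(u_l)\|\le C\rho^{\,l}$ for all $l$ and all admissible arguments. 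Granting this, $|\psi_{m,t,T}|\le C_0\rho^{\,m}$ uniformly in $t,T$ follows from the explicit formula, which is Assumption \ref{ass:2}(i).

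Finally, for Assumption \ref{ass:2}(ii) I would exploit local stationarity through a telescoping comparison of $\Phi(t,l)$ with the frozen power $B(t/T)^l$,
\begin{align*}
\Phi(t,l)-B(t/T)^l=\sum_{r=0}^{l-1}\Big(\prod_{k=0}^{r-1}B\big((t-k)/T\big)\Big)\big[B\big((t-r)/T\big)-B(t/T)\big]\,B(t/T)^{\,l-1-r}.
\end{align*}
The Lipschitz bound gives $\|B((t-r)/T)-B(t/T)\|\le C_0 r/T$, while the two flanking products are bounded by $C\rho^{\,r}$ and $C\rho^{\,l-1-r}$ from the key estimate, so each summand is $O\big((r/T)\rho^{\,l-1}\big)$ and the whole difference is $O\big(l^2\rho^{\,l}/T\big)=O(\tilde\rho^{\,l}/T)$ for any fixed $\tilde\rho\in(\rho,1)$. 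Combining this with the corresponding Lipschitz replacement $\gamma_k((t-l)/T)\to\gamma_k(t/T)$, also of order $l/T$, in the two formulas for $\psi_{m,t,T}$ and $\psi_m(t/T)$ yields $\sup_t|\psi_{j,t,T}-\psi_j(t/T)|\le C_0\tilde\rho^{\,j}/T$, which is Assumption \ref{ass:2}(ii) with $l_j:=\tilde\rho^{-j}$. This closes the verification of all requirements of Assumption \ref{ass:2} and hence establishes the representation \eqref{def: Def nonstationary process}.
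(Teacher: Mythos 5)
Your proposal is correct in substance and follows essentially the same route as the paper: the paper's entire proof is a deferral to \citet[Prop.~2.4]{DahlhausandPolonik(2009)}, which is precisely the source you invoke for the geometric decay of the companion-matrix products, and the companion-form unfolding, the Cauchy estimates for the frozen coefficients, and the telescoping comparison you write out are the steps the paper summarizes as ``we can verify the rest of the conditions of Assumption \ref{ass:2} in a similar way.'' Your version is considerably more explicit, in particular in handling the MA part $\gamma_k$ and in deriving Assumption \ref{ass:2}(ii).

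One statement does need repair, although your applications of it are sound. You assert $\|B(u_1)\cdots B(u_l)\|\le C\rho^{l}$ ``for all $l$ and all admissible arguments,'' and you attribute this to the fact that the $B(u)$ form a one-parameter Lipschitz family whose characteristic polynomials share the zero-free region $\{|z|\le 1+\delta\}$. For arbitrary argument sequences this is false, and the joint-spectral-radius phenomenon you yourself mention is exactly the obstruction: take $p=2$, $\beta_2(u)\equiv -0.56$ and $\beta_1(u)=1.5-3u$ on $[0,1]$; every $B(u)$ then has spectral radius at most $0.8$ and every AR polynomial $1-\beta_1(u)z+0.56z^2$ is zero-free on $|z|\le 1.2$, yet $B(0)B(1)$ has an eigenvalue near $-3.27$, so the alternating products $\left(B(0)B(1)\right)^k$ grow geometrically. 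What makes the bound true in \citet[Prop.~2.4]{DahlhausandPolonik(2009)} is not the common stability region alone but the slow variation of the arguments actually occurring in $\Phi(t,l)$: consecutive factors differ by $O(1/T)$, so blocks of a fixed large length $m$ are $O(m^2/T)$-perturbations of frozen powers $B(u)^m$, which contract uniformly in $u$; this is where local stationarity enters irreplaceably. Since you only ever apply the estimate to the consecutive-argument products $\prod_{k=0}^{r-1}B((t-k)/T)$ and to frozen powers $B(t/T)^{l-1-r}$, your verification of Assumption \ref{ass:2} survives once the lemma is restated for such slowly varying sequences. (A minor further point: the coefficients are only assumed Lipschitz, hence not necessarily differentiable, so the step ``differentiating the generating function in $u$'' should be replaced by a direct bound on $\Gamma(u,z)B(v,z)-\Gamma(v,z)B(u,z)$ over the circle $|z|=1+\delta/2$ before applying the Cauchy estimate; the conclusion $|\psi_j(u)-\psi_j(v)|\le C_0\rho^{j}|u-v|$ is unchanged.)
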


\section{Estimation for tvAR($p$) models with heavy-tailed  innovations}
\label{Sec:LocalPolynomialEstimation}
In what follows, we assume  that the  observed  time series data  $\{Y_{1,T},...,Y_{t,T}\}$
is generated by the following tvAR($p$) model
\begin{align}
	Y_{t,T}  = \sum_{j=1}^{p}\beta_j\left(\frac{t}{T}\right)Y_{t-j,T} + \epsilon_{t},\label{ar}
\end{align}
where $\{ \epsilon_{t} \}$ is a sequence of iid random variables 
satisfying   Assumption \ref{ass:1}.
Based  on Proposition \ref{Prop:equivalenve AR to MA}, the model \eqref{ar} admits the invertible MA($\infty$) representation \eqref{def: Def nonstationary process}.
In this section we construct and study estimators for the unknown coefficient function $\{ \beta_{j}(\cdot), j=1,\ldots, p\}$ under the above setup
by appealing to the methodology of local regression; see \citet{FanandYao(2003)}, for instance. To take into account effects of heavy-tailed innovations we utilize  a weighted approach that effectively eliminates unusual observations and makes consistent estimation possible even under heavy-tailed innovations.

\subsection{{Local regression estimator}}
Set $\beta(\cdot)=(\beta_{1}(\cdot), \ldots, \beta_{p}(\cdot))^\top$ and define the residual process
$e_{t-1,T}(\beta) := Y_{t,T} - \beta^\top X_{t-1,T}$ ($t=p+1,...,T$),
and  consider \textit{{local} Self-Weighted Least Absolute Deviation Estimator (LSWLADE)} at {$u=u_0$} defined as
\begin{align}
	{\hat\beta_{T}(u_0)} := \arg\min_{\beta} {
		\sum^{T}_{t= p+1}
		K_h(t-\intp[u_0T])
		w_{t-1,T} |e_{t-1,T}(\beta)|},
	\label{local_SLADE}
\end{align}
by utilizing   the self-weight function  defined by $w_{t-1,T}:=g(X_{t-1,T})$ as explained in Section \ref{Sec:tv-MA processes}. 
Furthermore  {$K_h(t-\intp[u_0T])=h^{-1}K((t-\intp[u_0T])/(Th))$}  with $K(\cdot)$ being  a kernel function, $h$ is a bandwidth parameter, which converges to zero as $T\to\infty$. 
The weights are introduced to reduce the effect
of unusual observations which can be of substantial order when the errors  follow  a heavy-tailed  distribution. We will see that this particular weighting scheme yields consistent estimators for the regression parameters and their respective asymptotic covariance matrix. If $w_{t-1,T} \equiv 1$, then \eqref{local_SLADE} is simply a version of  
least absolute deviation estimator (see \citet{DavisandDunsmuir(1997)}) but  for locally stationary 
processes;  see \cite{Halletal(2002)}. Additionally, when  $w_{t-1,T} \equiv 1$, \eqref{local_SLADE} is simply the smooth conditional likelihood 
approximation to the case of conditionally Laplace distributed time series.


\subsection{Asymptotic inference}


%

We  derive the limit distribution of  the LSWLADE
$\hat\beta_{T}(u_0)$ defined by \eqref{local_SLADE}.
We  will make some additional assumptions. 
The following  conditions are mild and 
used 
to establish large-sample behavior of  non-parametric estimators.

\begin{assumption}\label{ass:n4}
	Suppose that 
	\[
	\frac{h}{T}\blue{a_{\intp[Th]}^2}\to 0
	\text{ and }
	Th^3 \blue{b_{\intp[Th]}}^2 \to 0,
	\]
	where \blue{$a_n$ and $b_n$ ($n\in\N$) are sequences of positive real numbers defined as} 
	$a_n := \inf \left\{x: P(|\epsilon_1|>x)\leq 1/n\right\}$
	and 
	$b_n := \E\left[ |\epsilon_1| \mathbb{I}(|\epsilon_1|\leq a_n) \right]$ $(n\in \N)$.
\end{assumption}
\noindent
Assumption \ref{ass:n4} is required to make a suitable  approximation of the objective function obtained by the   
non-stationary data  by the  corresponding  objective function  
based on their  stationary version. This is a mild assumption and it is easy to see that is satisfied when $\epsilon_1$ follows, for example,  the standard Cauchy distribution.
In this case,  a simple calculation shows that 
\[
a_n = \frac{2n}{\pi} + o(n)\quad \text{ and }\quad
b_n = \frac{\log(1+a_n^2)}{\pi}.
\]
Assumption \ref{ass:n4} is fulfilled if $Th^3(\log(Th))^2\to 0$.
In this case, a sufficient conditions for $h$ is $\limsup_{T\to\infty}Th^{\delta}<\infty$ for some $\delta\in(1,3)$.

\medskip

More generally, suppose that there exists finite positive constants\ $K, c_1$ and $c_2$ such that  the function $L$ in Assumption \ref{ass:1} satisfies 
$0< c_1\leq L(x) \leq c_2<\infty$ for all $x\geq K$.
Then, 
for sufficiently large $n$, 
it can be  shown that
$c_1'n^{1/\alpha}\leq \blue{a_n}\leq 
c_2'n^{1/\alpha}$, where $c_1'$ and $c_2'$ are finite absolute constants.
Thus, $a_n$ is of order $n^{1/\alpha}$.
Hence, the first condition of Assumption \ref{ass:n4} \blue{is equivalent to}
\[
\frac{h}{T}(Th)^{2/\alpha} = T^{2/\alpha -1}h^{2/\alpha+1}\to 0.
\]
If $0<\alpha<2$, then $h$ should {be of the order}  $o(1/T^{(2-\alpha)/(2+\alpha)})$, 
\blue{where the exponent $(2-\alpha)/(2+\alpha)$ belongs to the interval $(0,1)$}.
To derive the order of $b_n$, we consider an additional structure of the model. 
Suppose that the density function of $\epsilon_t$ is given by
\begin{align}
	f(y) = 
	\begin{cases}
		g_\alpha(y) & (|y|\leq K)\\
		l_\alpha(y) |y|^{-\alpha-1} & (|y|> K)
	\end{cases},\notag
\end{align}
where $g_\alpha$ and $l_\alpha$ are chosen so that $f$ satisfies the properties of a density function, and assume that there exist finite positive constants $c_3$ and $c_4$ such that
$c_3\leq l_\alpha(x)\leq c_4$ for all $|y|> K$. This construction  includes the Cauchy distribution with $\alpha=1$, $l_\alpha(x) := x^2/(\pi(1+x^2))$, $K=1$, $c_3=1/(2\pi)$ and $c_4 = 1$.
In this case, for sufficient large $n$, some simple calculations yield
that the order of $b_n$ is $O(1)$ ($\alpha>1$),
$O(\log n)$ ($\alpha=1$) and $O(n^{1-\alpha})$ ($0<\alpha<1$).
In summary, $h$ should satisfy the following conditions in this model.
\begin{itemize}
	\item If $1<\alpha<2$, then $Th^3\to0$.
	\item If $\alpha=1$, then $(Th^3)^{1/2}\log(Th)\to 0$.  A sufficient conditions for this to hold  is 
	$\limsup_{T\to\infty}Th^{\delta}<\infty$ for some $\delta\in(1,3)$.
	\item If $0<\alpha<1$, then $Th^{(2+\alpha)/(2-\alpha)}, Th^{(5-2\alpha)/(3-2\alpha)}\to0$.
	Since $(2+\alpha)/(2-\alpha)<(5-2\alpha)/(3-2\alpha)$,
	the condition is $Th^{(2+\alpha)/(2-\alpha)}\to 0$ suffices, where $(2+\alpha)/(2-\alpha)\in(1,3)$.
\end{itemize}


%

\begin{assumption}\label{new_ass:5}\mbox{} 
		(i) 	The density function $f$ of $\epsilon_1$ satisfies $f(0)>0$ and $\sup_s \{|f'(s)| + f(s)\}<\infty$;
		(ii)	The matrices $\Sigma(u):=\E[w_{t-1}(u)X_{t-1}(u)X^\top_{t-1}(u)]$ and
		$\Omega(u):=\E[w_{t-1}^2(u) X_{t-1}(u)X^\top_{t-1}(u)]$ are non-singular; 
		(iii)	The kernel function $K$ is symmetric about zero, bounded and 
		$K(x)=0$ for $|x|>C_K$ with some $C_K<\infty$;
		(iv)	$\kappa_2 := \int^{C_K}_{-C_K}K^2(v)dv>0$. 
\end{assumption}

\begin{remark} \rm 
	Recall that in what follows, the  error sequence $\{ \epsilon_{t} \}$
	consists of iid random variables satisfying Assumption \ref{ass:1}. Using our techniques, in combination with the work by \citet[p. 200 and p. 209]{Knight(1991)} it is possible to extend Theorem \ref{thm:1} to the case of martingale difference errors. Then Assumption \ref{new_ass:5}(i) can be still employed but by assuming that $f$ is the conditional density of ${\epsilon_{t}}$ 
	given the past.  
\end{remark}

\begin{theorem}\label{thm:1} 
	Consider the tvAR($p$) process defined by \eqref{ar}. Suppose further  that Assumptions \ref{ass:1}--\ref{new_ass:5} hold true \blue{for $u=u_0$} and that the bandwidth parameter  $h$ satisfies
	$Th\to\infty$, $Th^3 \to 0$ as $T\to\infty$.
	Then,
	\begin{align}
		\sqrt{Th}\left[{\hat\beta_T(u_0)} - \beta(u_0)\right]
		\dlim N\left(
		0_p, \frac{\kappa_2}{4f(0)^2} \Sigma^{-1}(u_0)\Omega(u_0)\Sigma^{-1}(u_0)
		\right). \label{eq:lim_thm1}
	\end{align}
\end{theorem}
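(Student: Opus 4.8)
The plan is to use the convexity (epi-convergence) device of \citet{Knight(1991)}: the recentred and rescaled criterion is convex in the localizing parameter, so it suffices to establish finite-dimensional convergence of this criterion to a limit possessing a unique minimizer, after which the argmin converges in law to the argmin of the limit. Concretely, I would set $\theta := \sqrt{Th}(\beta - \beta(u_0))$ and study
\[
Z_T(\theta) := \sum_{t=p+1}^{T} K_h(t-\intp[u_0 T])\, w_{t-1,T}\left[\Big|e_{t-1,T}\big(\beta(u_0)+\tfrac{\theta}{\sqrt{Th}}\big)\Big| - \big|e_{t-1,T}(\beta(u_0))\big|\right],
\]
which is a nonnegative-weighted sum of absolute values of affine functions of $\theta$, hence convex, and is minimized at $\hat\theta_T = \sqrt{Th}(\hat\beta_T(u_0)-\beta(u_0))$. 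Writing $e_{t-1,T}(\beta(u_0)) = \epsilon_t + r_{t,T}$ with local-bias term $r_{t,T} = (\beta(t/T)-\beta(u_0))^\top X_{t-1,T}$, the argument inside the bracket is shifted by $\theta^\top X_{t-1,T}/\sqrt{Th}$.

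Next I would apply Knight's identity $|x-y|-|x| = -y\,\mathrm{sign}(x) + 2\int_0^y[\mathbb{I}(x\le s)-\mathbb{I}(x\le0)]\,ds$ to split $Z_T(\theta)$ into a linear part $\theta^\top W_T$ and a nonlinear part $N_T(\theta)$; these are precisely the two pieces that Lemmas \ref{lem:3} and \ref{lem:4} are designed to control. For the linear part, $W_T = -(Th)^{-1/2}\sum_t K_h(\cdot)\,w_{t-1,T}X_{t-1,T}\,\mathrm{sign}(\epsilon_t+r_{t,T})$; since the median of $\epsilon_t$ is zero (Assumption \ref{ass:1}(iii)), $\mathrm{sign}(\epsilon_t)$ is a bounded martingale difference with respect to the past, and the crucial point is that although $X_{t-1,T}$ has infinite variance, Assumption \ref{ass:3} bounds $w_{t-1,T}\|X_{t-1,T}\|$, so $w_{t-1,T}X_{t-1,T}\,\mathrm{sign}(\epsilon_t)$ has a finite second moment. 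I would therefore apply a martingale-array CLT (Lindeberg/Lyapunov), after replacing $w_{t-1,T}X_{t-1,T}$ by its stationary surrogate $w_{t-1}(u_0)X_{t-1}(u_0)$ via Proposition \ref{Prop: stationary approx}(2), to obtain $h\,W_T \dlim N(0,\kappa_2\Omega(u_0))$; here the conditional-variance sum $\tfrac{1}{Th}\sum_t[K_h(\cdot)]^2 w_{t-1}^2(u_0)X_{t-1}(u_0)X_{t-1}^\top(u_0)$ converges to $h^{-2}\kappa_2\Omega(u_0)$ by a kernel law of large numbers, while the $r_{t,T}$-contribution to the mean is of order $\sqrt{Th^3}\to0$, hence asymptotically negligible.

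For the nonlinear part I would take conditional expectations given $\Filt_{t-1}$: the inner integral has conditional mean $\int_0^{a_t}(F(s)-F(0))\,ds \approx \tfrac12 f(0)a_t^2$ with $a_t=\theta^\top X_{t-1,T}/\sqrt{Th}$, so that $\E[N_T(\theta)\mid\Filt_{t-1}] \approx \tfrac{f(0)}{Th}\sum_t K_h(\cdot)w_{t-1,T}(\theta^\top X_{t-1,T})^2$, which by the same stationary approximation and kernel LLN tends to $h^{-1}f(0)\,\theta^\top\Sigma(u_0)\theta$. Multiplying the criterion by $h$ (which leaves the minimizer unchanged), I would conclude $h\,Z_T(\theta)\dlim \theta^\top Z + f(0)\,\theta^\top\Sigma(u_0)\theta$ with $Z\sim N(0,\kappa_2\Omega(u_0))$; this quadratic limit has the unique minimizer $-\tfrac{1}{2f(0)}\Sigma^{-1}(u_0)Z$, and the convexity lemma then yields $\hat\theta_T\dlim-\tfrac{1}{2f(0)}\Sigma^{-1}(u_0)Z$, whose law is exactly \eqref{eq:lim_thm1}.

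The main obstacle is the concentration of $N_T(\theta)$ around its conditional mean in the infinite-variance regime: I must show $N_T(\theta)-\E[N_T(\theta)\mid\Filt_{t-1}]\plim0$ and that the $r_{t,T}$- and discretization-remainders are negligible, even though the summands involve the heavy-tailed $\epsilon_t$ and, through $X_{t-1,T}$, infinite-variance factors. This is exactly where Assumption \ref{ass:n4} and the truncation levels $a_n,b_n$ enter: the higher-order remainder terms are dominated by sums of the form $\sum_{t} A_t$ with $A_t=\sum_j l_j^{-1}|\epsilon_{t-j}|$, whose stochastic order is governed by Theorem~4.1 of \citet{davis1985limit}, and the bandwidth restrictions $Th\to\infty$, $Th^3\to0$ (sharpened by Assumption \ref{ass:n4}) are precisely what force these remainders and the local bias below the $\sqrt{Th}$ scale.
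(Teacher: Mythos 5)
Your proposal is correct and follows essentially the same route as the paper's proof: Knight's identity splits the localized convex criterion into a linear term handled by a martingale CLT (the paper's Lemma \ref{lem:3}, via McLeish's theorem) and a quadratic term obtained from conditional means plus an $L^2$-negligible martingale remainder (Lemma \ref{lem:4}), with the nonstationary-to-stationary replacement and the higher-order remainders controlled through sums of $A_t=\sum_j l_j^{-1}|\epsilon_{t-j}|$ and Assumption \ref{ass:n4} exactly as in the Appendix, and the convex argmin device then delivering \eqref{eq:lim_thm1}. The only substantive difference is that you keep the local drift $r_{t,T}=(\beta(t/T)-\beta(u_0))^\top X_{t-1,T}$ explicit in the criterion and verify that its contribution is $O(\sqrt{Th^3})=o(1)$, a step the paper elides by writing $L_T$ directly in terms of $\epsilon_t$; your bookkeeping there is, if anything, slightly more careful.
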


\medskip

The result shows the validity of the local normal approximation. A more refined result which quantifies the bias of  
$\hat\beta_{T}(u_0)$  will be given in the next Section.
Theorem \ref{thm:1}
generalizes \citet[Thm.1]{Ling2005} and \citet[Thm. 2]{Panetal(2007)} in the framework of locally stationary processes with
suitable weight functions. Its proof is postponed to the Appendix.

Closing this section, we mention that Theorem \ref{thm:1} provides pointwise asymptotic results. It is interesting to extend this work by showing uniform convergence of the estimators and develop associated testing theory. This will be reported elsewhere.

\subsection{Bootstrap estimator of asymptotic variance}
The asymptotic variance of the SWLADE contains the unknown quantities $\Sigma(u_0)$, $\Omega(u_0)$ and $f^2(0)$.
The first two matrices are estimated by their sample analogue; that is, if we define 
\begin{align}
	\hat{V}_T^{(j)} := \frac{1}{Th}\sum_{t=p+1}^T K\left(\frac{t-{\intp[u_0T]}}{Th}\right) w_{t-1,T}^jX_{t-1,T}X_{t-1,T}^\top, \notag
\end{align}
then $\hat{V}_T^{(1)}$ and $\hat{V}_T^{(2)}$ are, respectively, the consistent estimators of $\Sigma(u_0)$ and $\Omega(u_0)$.
However, a kernel density 
estimator for $f(0)$ is still needed and this  requires an additional tuning parameter. 
To avoid such complicated  procedure, we apply the multiplier bootstrap as in  \cite{ZhuandLing(2015)}. Define 
\begin{align}
	\hat\beta_{T}^*(u_0) := \arg\min_{\beta}
	\sum^{T}_{t= p+1}
	z_tK_h(t-\intp[u_0T])
	w_{t-1,T} |e_{t-1,T}(\beta)|, \label{BS_local_SLADE}
\end{align}
where $\{z_t: t=p+1,...,T\}$ is a sequence of i.i.d.\ random variables with unit mean and variance.
Repeating this minimization for $M$ times with independently drawn $\{z_t: t=p+1,...,T\}$, we obtain 
$M$-bootstrap estimators $\hat\beta_{T}^{*(1)}(u_0), ..., \hat\beta_{T}^{*(M)}(u_0)$.
By using these replicated statistic, we can construct\blue{, for example,} a confidence region for $\beta(u_0)$.
\blue{On the other hand,}
by a user-specified $q\times p$ matrix $R$ and $q\times 1$ vector $c$,
the Wald test statistic for the null hypothesis $H: R\beta_0 = c$ is defined as
\begin{align}
	W_n := (R\hat{\beta}_{T}(u_0) - c)^\top (R\hat{V}_{T,M}R^\top)^{-1}(R\hat{\beta}_{T}(u_0) - c),\notag 
\end{align}
where $\hat{V}_{T,M}$ is the sample covariance matrix of $\hat\beta_{T}^{*(1)}(u_0), ..., \hat\beta_{T}^{*(M)}(u_0)$.
Under the null hypothesis $H$, it holds that $W_n\dlim \chi^2_q$. 
The validity of these procedures is guaranteed by the following theorem.
\begin{theorem}\label{thm:BS}
	Suppose that all assumptions in Theorem \ref{thm:1} hold. In addition, suppose that $a_n$ in Assumption \ref{ass:n4} satisfies $h\blue{a_{\intp[Th]}}\to0$.
	Then, conditional on $Y_1(u_0), ..., Y_T(u_0)$, it holds that 
	\begin{align}
		\sqrt{Th}\left(\hat\beta_{T}^*(u_0) - \hat\beta_{T}(u_0)\right)
		\dlim N\left(
		0_p, \frac{\kappa_2}{4f(0)^2} \Sigma^{-1}(u_0)\Omega(u_0)\Sigma^{-1}(u_0)
		\right)\quad(T\to\infty).\notag
	\end{align}
\end{theorem}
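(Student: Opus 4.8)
The plan is to run the argument behind Theorem~\ref{thm:1} a second time, but \emph{conditionally on the data}, with the multipliers $\{z_t\}$ playing the role previously played by the innovations. First I would reparametrise by $v=\sqrt{Th}\,(\beta-\hat\beta_T(u_0))$ and consider the convex centred criterion
\begin{align}
G_T^*(v)=\sum_{t=p+1}^{T} z_t\,K_h(t-\intp[u_0T])\,w_{t-1,T}\Big[\,\big|e_{t-1,T}(\hat\beta_T(u_0)+v/\sqrt{Th})\big|-\big|e_{t-1,T}(\hat\beta_T(u_0))\big|\,\Big],\notag
\end{align}
whose minimiser is exactly $\hat v^*=\sqrt{Th}\,(\hat\beta_T^*(u_0)-\hat\beta_T(u_0))$. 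Because $G_T^*$ is convex in $v$, the convexity/argmin lemma of \citet{Knight(1991)} reduces the theorem to identifying, for each fixed $v$, the conditional limit in distribution of the normalised criterion $h\,G_T^*(v)$ (the factor $h$ compensates the $h^{-1}$ carried by $K_h$ and leaves the minimiser unchanged). I will show this limit equals $-v^\top Z+f(0)\,v^\top\Sigma(u_0)v$ with $Z\sim N(0,\kappa_2\Omega(u_0))$, whence $\hat v^*\dlim \tfrac{1}{2f(0)}\Sigma^{-1}(u_0)Z$, a Gaussian vector with the stated sandwich covariance.

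Next I would apply Knight's identity to write $h\,G_T^*(v)=-v^\top (h\tilde S_T^*)+h\,R_T^*(v)$, where the score is $\tilde S_T^*=(Th)^{-1/2}\sum_t z_t K_h(t-\intp[u_0T])w_{t-1,T}X_{t-1,T}\,\mathrm{sign}(\hat e_{t-1,T})$ with fitted residuals $\hat e_{t-1,T}=e_{t-1,T}(\hat\beta_T(u_0))$, and $R_T^*(v)$ is the Knight integral remainder. Splitting $z_t=1+(z_t-1)$ decomposes the score into the subgradient $\tilde S_T^{(0)}=(Th)^{-1/2}\sum_t K_h w_{t-1,T}X_{t-1,T}\mathrm{sign}(\hat e_{t-1,T})$ of the \emph{unweighted} objective at its own minimiser $\hat\beta_T(u_0)$, and the multiplier part $S_T^*=(Th)^{-1/2}\sum_t (z_t-1)K_h w_{t-1,T}X_{t-1,T}\mathrm{sign}(\hat e_{t-1,T})$. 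The first-order optimality of $\hat\beta_T(u_0)$ forces $\|\tilde S_T^{(0)}\|$ to be controlled by the at most $p$ residuals vanishing at the LAD solution, each contributing at most $h^{-1}\|K\|_\infty M/\sqrt{Th}$ by the boundedness of $w_{t-1,T}\|X_{t-1,T}\|$ from Assumption~\ref{ass:3}; hence $h\,\tilde S_T^{(0)}=O(1/\sqrt{Th})=o(1)$ since $Th\to\infty$. For the multiplier part I would invoke a conditional Lindeberg--Feller central limit theorem: conditional on the data its summands are independent with zero mean; its conditional covariance equals $h^2(Th)^{-1}\sum_t K_h^2 w_{t-1,T}^2 X_{t-1,T}X_{t-1,T}^\top$, which converges in probability to $\kappa_2\Omega(u_0)$ by the law-of-large-numbers arguments behind the consistency of $\hat{V}_T^{(2)}$ (the $\kappa_2$ arising from $K^2$); and the conditional Lindeberg ratio is negligible because $w_{t-1,T}\|X_{t-1,T}\|$ is bounded, $z_1$ has finite variance, and $Th\to\infty$. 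Thus $h\,S_T^*\dlim N(0,\kappa_2\Omega(u_0))$ conditionally, in probability.

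The delicate part is the remainder $R_T^*(v)$. Writing $z_t=1+(z_t-1)$ again, the multiplier piece $\sum_t(z_t-1)K_h w_{t-1,T}R_t(v)$ has conditional mean zero and conditional variance of order $(Th)^{-1}\to 0$ (the per-term remainders being $O((v^\top X_{t-1,T})^2/(Th))$, with $w^2_{t-1,T}(v^\top X_{t-1,T})^4$ bounded by Assumption~\ref{ass:3}), so it is asymptotically negligible. The genuinely new point is the leading piece $2\sum_t K_h w_{t-1,T}R_t(v)$: once we condition on the data the residuals $\hat e_{t-1,T}$ are frozen, so each $R_t(v)=\mathbb{I}\{\hat e_{t-1,T}\in(0,\,v^\top X_{t-1,T}/\sqrt{Th}]\}\,(v^\top X_{t-1,T}/\sqrt{Th}-\hat e_{t-1,T})$ (and its reflection) is nonzero only for residuals within $O(1/\sqrt{Th})$ of the origin. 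Consequently the factor $f(0)$ is \emph{not} produced by an expectation over fresh innovations, as in Theorem~\ref{thm:1}, but must emerge from the kernel-weighted empirical density of the fitted residuals at zero. I would show this weighted empirical density converges to $f(0)\Sigma(u_0)$ by linearising $\hat e_{t-1,T}=\epsilon_t-(\hat\beta_T(u_0)-\beta(t/T))^\top X_{t-1,T}$, using $f(0)>0$ and $\sup_s|f'(s)|<\infty$ from Assumption~\ref{new_ass:5}(i) together with Lemmas~\ref{lem:3} and~\ref{lem:4}. This is exactly where the additional hypothesis $h\,a_{\intp[Th]}\to 0$ enters: $a_{\intp[Th]}$ bounds the magnitude of the regressors reached inside the bandwidth window, and $h\,a_{\intp[Th]}\to 0$ controls the fitted-residual perturbation uniformly (the extreme, heavily perturbed terms being suppressed by the self-weights), so that the empirical residual density at zero is not distorted and the bootstrap Hessian reproduces $2f(0)\Sigma(u_0)$.

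Assembling the three pieces gives $h\,G_T^*(v)\dlim -v^\top Z+f(0)\,v^\top\Sigma(u_0)v$ conditionally (in the data's probability), with $Z\sim N(0,\kappa_2\Omega(u_0))$, and the convexity lemma upgrades this finite-dimensional convergence to convergence of the minimisers, yielding the stated limit $N(0,\tfrac{\kappa_2}{4f(0)^2}\Sigma^{-1}(u_0)\Omega(u_0)\Sigma^{-1}(u_0))$. I expect the main obstacle to be precisely the treatment of the leading remainder: reproducing the $f(0)\Sigma(u_0)$ Hessian from the frozen residuals requires a uniform local empirical-process control of the residuals near zero under heavy tails, and it is the interplay of the self-weighting (Assumption~\ref{ass:3}) with the new quantile condition $h\,a_{\intp[Th]}\to 0$ that makes this control go through, together with the reconciliation of the conditioning on $Y_1(u_0),\dots,Y_T(u_0)$ with the observed non-stationary quantities via the stationary approximation of Proposition~\ref{Prop: stationary approx}.
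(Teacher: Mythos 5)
Your strategy is genuinely different from the paper's, and the difference is exactly where the gap lies. The paper never works with fitted residuals: it centers \emph{both} estimators at the true value, establishing the two expansions $\sqrt{Th}\,(\hat\beta_T(u_0)-\beta(u_0))=-[2f(0)\Sigma(u_0)]^{-1}W_T+o_p(1)$ and $\sqrt{Th}\,(\hat\beta_T^*(u_0)-\beta(u_0))=-[2f(0)\Sigma(u_0)]^{-1}W_T^*+o_p(1)$, where $W_T^*$ is the $z_t$-weighted score built from $\mathrm{sign}(\epsilon_t)$. Because the bootstrap criterion there still involves the true innovations, the Hessian term converges to $(f(0)/2)\,v^\top\Sigma(u_0)v$ by exactly the arguments of Lemma \ref{lem:4} (the independent, unit-mean $z_t$ pass through the conditional expectations), and the extra hypothesis $h\,a_{\intp[Th]}\to 0$ is consumed in bounding $\|Q_T^*-W_T^*\|$ via Cauchy--Schwarz, $\sum_t z_tA_{t-i}\le(\sum_t z_t^2)^{1/2}(\sum_t A_{t-i}^2)^{1/2}$, together with $h^2\sum_t A_{t-i}^2=O_p\bigl((h\,a_{\intp[Th]})^2\bigr)=o_p(1)$ from Lemma \ref{lem:At}. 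Subtracting the two expansions leaves a pure $(z_t-1)$-weighted score, whose conditional CLT is elementary. Your reparametrisation around $\hat\beta_T(u_0)$, by contrast, freezes the fitted residuals once you condition, so every step the paper settles by taking expectations over $\epsilon_t$ must be replaced by an empirical-process statement about residuals near zero; those statements are what is missing.

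Concretely, two of your steps fail as written. First, the multiplier part of the Knight remainder: conditionally, $R_t(v)$ is \emph{not} $O((v^\top X_{t-1,T})^2/(Th))$ --- that is its mean over the law of $\epsilon_t$, which is no longer available after conditioning; each frozen $R_t(v)$ is either essentially zero or as large as $|v^\top X_{t-1,T}|/\sqrt{Th}$, so the honest bound on the conditional variance is $\sum_{t\in\Tdm_T}K^2w_{t-1,T}^2R_t(v)^2\le (Th)^{-1}\|v\|^2\|K\|_\infty^2M^2|\Tdm_T|=O(1)$, not $O((Th)^{-1})$; to do better you must show that only $O_p(\sqrt{Th})$ of the $\asymp Th$ residuals in the window lie within $O(1/\sqrt{Th})$ of zero. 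Second, and more seriously, your mechanism for recovering $f(0)\Sigma(u_0)$ is flawed: writing $\hat e_{t-1,T}=\epsilon_t-\delta_t$ with $\delta_t=(\hat\beta_T(u_0)-\beta(t/T))^\top X_{t-1,T}$, one has $\max_{t\in\Tdm_T}|\delta_t|\asymp(1/\sqrt{Th}+h)\max_{t\in\Tdm_T}\|X_{t-1,T}\|$, and under Assumption \ref{ass:1} with $\alpha<2$ the largest regressor in the window is of order $a_{\intp[Th]}\asymp(Th)^{1/\alpha}$, so that $a_{\intp[Th]}/\sqrt{Th}\to\infty$; no condition on $h\,a_{\intp[Th]}$ can make the perturbation uniformly small, contrary to your claim. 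The damping must come from the self-weights $w_{t-1,T}$, and, since $\delta_t$ depends on $\hat\beta_T(u_0)$ and hence on all innovations, comparing the frozen-residual Hessian with its $\delta_t\equiv 0$ counterpart requires a stochastic-equicontinuity lemma (uniform over a data-dependent neighbourhood, with heavy-tailed regressors) that your proposal does not supply. Until that lemma is proved the argument is incomplete; the paper's centering at $\beta(u_0)$ is precisely the device that makes such a lemma unnecessary.
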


\begin{remark}
	In the Cauchy case, the additional condition for $a_n$ is equivalent to $Th^2\to0$. 
\end{remark}

\begin{remark}
	The block bootstrap \citep{kunsch1989jackknife, doi:10.1080/01621459.1994.10476870} and 
	the dependent wild bootstrap \citep{shao2010dependent} 
	are well known resampling methods applied to time series data. 
	For infinite variance models, \cite{cavaliere2016sieve} proposed a sieve-based bootstrap method.
	In general, these  methods are designed for stationary processes and might not reproduce the time-varying structure of nonstationary observations. To the best of our knowledge, there is no result of any time series resampling method related to  time-varying coefficient models. 
	The multiplier bootstrap estimator incorporates the local information by the kernel function, and the proposed method works for both stationary and non-stationary models.
\end{remark}

\section{On the bias of LSWLADE}
\label{Sec:Bias}

In this section, we study  the bias of the LSWLADE.
It is well known that calculation of the bias of nonparametric estimators determines their mean 
square error. We approximate the bias by a quasi-LSWLADE as we explain next and develop
an approach for estimating its bias.  The bias of the original LSWLADE, in the setup we study,  is too complex
to be evaluated analytically.  Only the order of some higher-order terms corresponding to the bias can be evaluated from above, and it seems challenging  to derive the constant part. Especially, the rate of the optimal bandwidth $T^{-1/3}$ (in this case the order of bias of the original LSWLADE and the quasi-LSWLADE become same) does not meet our assumptions. Therefore, we  focus on evaluating   the bias term of the quasi-LSWLADE  which  can be calculated. This allows us to perform partial bias correction.

\subsection{The quasi local self-weighted least absolute deviations estimator (quasi-LSWLADE)}

We define the function 
\begin{align}
	{L_T}(u_0;v)
	&=\sum^{T}_{t= p+1}
	K\left(\frac{t-{\intp[u_0T]}}{Th}\right)
	w_{t-1,T} \left\{\left|\epsilon_t-\frac{1}{\sqrt{Th}}{v}^\top X_{t-1,T}\right|-\left|\epsilon_t\right|\right\}.\label{objedt_original}
\end{align}
Then, {it is shown that the LSWLADE, defined by \eqref{local_SLADE}, satisfies
	\[
	\sqrt{Th}(\hat\beta_T(u_0) - \beta(u_0)) = \hat v_T(u_0),
	\]
	where $\hat v_T(u_0)$ is the minimizer of \eqref{objedt_original}.
}
The expansion of {$L_{T}(u_0; v)$}, defined by \eqref{objedt_original}, contains  terms which are difficult to handle; in fact such terms are quite complex to be analyzed and their order is larger  than the bias of the quasi-SWLADE which behaves as  $\sqrt{Th}h^2$, provided $Th^3\to 0$.  Therefore we  study  the following approximate objective function given by 
\begin{align*}
	{\widetilde{L}_{T}(u_0;v)}:=\sum^{T}_{t= p+1}
	K\left(\frac{t-t_0}{Th}\right)
	w_{t-1}\left(\frac{t}{T}\right)\left[-\frac{1}{\sqrt{Th}}{v}^\top X_{t-1}\left(\frac{t}{T}\right)\mathrm{sign}\left(\epsilon_t\right)+\frac{f\left(0\right)}{Th}\left\{\widetilde{v}_{t,T}\left(u_0\right)^\top X_{t-1}\left(\frac{t}{T}\right)\right\}^2\right],
\end{align*}
where $\widetilde{v}_{t, T}(u_0):=[v-\sqrt{T h}\{\beta(t/T)-\beta(u_0)\}]$.  The difference between the  objective functions is quantified by the following Lemma whose proof is postponed to the Appendix. In addition we postulate the following assumption which implies weak dependence between all processes considered.  This condition is imposed to handle all possible terms found in the bias terms, in a uniform way.   Some examples of such terms that satisfying  this assumption are given  in the appendix, where 
we impose this assumption on $\mathrm{Cov}\left(w_{t_1-1}\left(u_0\right)X_{t_1-1}\left(u_0\right)X^{T}_{t_1-1}\left(u_0\right),w_{t_2-1}\left(u_0\right)X_{t_2-1}\left(u_0\right)X^{T}_{t_2-1}\left(u_0\right)\right)$ for the proof of \eqref{eq:B_{1,T}} and on $\mathrm{Cov}\left(c_{t}^{\left(2\right)}\left(u_0\right),c_{t+s}^{\left(2\right)}\left(u_0\right)\right)$ for the proof of \eqref{eq:B_{2,T}}, with  $c_{t}^{\left(2\right)}\left(u_0\right)$, which appears in the proof of \eqref{eq:E(B_{2,T})}, has a complicated form. This assumption is satisfied if we use a weight function with compact support.

\begin{assumption}\label{LemmaA1}
	Define
	$W_{t-1}:=g(X_{t-1}(u_0))+ g'(X_{t-1}(u_0))+g''(X_{t-1}(u_0))$, 
	\begin{align}
		&l_{t-1} := X_{t-1}\left(u_0\right)+\frac{\partial X_{t-1}\left(u_0\right)}{\partial u}+\frac{\partial^2 X_{t-1}\left(u_0\right)}{\partial u^2},\notag
	\end{align}
	and $f_{t-1}:= W_{t-1}\{\mathrm{diag}(l_{t-1}) + l_{t-1}l_{t-1}^\top\}^2$.
	Then, the $(i,j)$th component $f_{t-1}^{(i,j)}$ of $f_{t-1}$ satisfies
	\begin{align*}
		\sum_{s=-\infty}^{\infty}\left|\mathrm{Cov}\left(
		f_{t-1}^{(i,j)},f_{t+s-1}^{(i,j)}\right)\right|<\infty
	\end{align*}
	for all $i,j\in\{1,...,p\}$ and the derivatives of the process $\{X_{t}(u_0)\}$ are defined in section \ref{sec:DerProc}.
\end{assumption}	

\begin{proposition}\label{remarkA1}
	Under assumptions \ref{ass:1}-\ref{LemmaA1}, we have that 
	\begin{align}
		L_{T}(u_0;v)-\widetilde{L}_{T}(u_0;v)=O_p\left(\frac{1}{\sqrt{Th}}+h\right)=O_p\left(\sqrt{Th}h^2\left(\frac{1}{Th^3}+\frac{1}{\sqrt{Th^3}}\right)\right).\notag
	\end{align}
\end{proposition}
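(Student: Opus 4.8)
The plan is to linearise the absolute-value increments in \eqref{objedt_original} and then to match the result block by block against $\widetilde L_T$. Since $\sqrt{Th}(\hat\beta_T(u_0)-\beta(u_0))=\hat v_T(u_0)$, it suffices to control $L_T(u_0;v)-\widetilde L_T(u_0;v)$ at a fixed $v$. First I would apply Knight's decomposition
\[
|a-b|-|a| = -b\,\mathrm{sign}(a) + 2\int_0^{b}\bigl(\mathbb{I}(a\le s)-\mathbb{I}(a\le 0)\bigr)\,ds,
\]
to each summand with $a=\epsilon_t$ and $b=b_t$ the residual perturbation at $\beta=\beta(u_0)+v/\sqrt{Th}$, namely $b_t=(Th)^{-1/2}\widetilde v_{t,T}(u_0)^{\top}X_{t-1,T}$. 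This writes $L_T(u_0;v)$ as a score part, linear in $\mathrm{sign}(\epsilon_t)$, plus a remainder $\sum_t K(\cdot)w_{t-1,T}\,2\int_0^{b_t}(\cdots)\,ds$, which I would split into its conditional expectation given $\mathcal{F}_{t-1}=\sigma(\epsilon_{t-1},\epsilon_{t-2},\dots)$ and a martingale fluctuation. As $\epsilon_t$ has median zero (Assumption \ref{ass:1}(iii)) we have $F(0)=1/2$, and the expansion $F(s)-F(0)=f(0)s+O(s^2)$ (legitimate under Assumption \ref{new_ass:5}(i)) turns the conditional expectation into $\sum_t K(\cdot)w_{t-1,T}f(0)b_t^2$ plus an integrated $O(b_t^3)$ error; this is precisely the curvature block appearing in $\widetilde L_T$.

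Next I would match the blocks. The score of $L_T$ agrees with the linear term of $\widetilde L_T$ once the non-stationary $w_{t-1,T}X_{t-1,T}$ is replaced by its stationary version $w_{t-1}(t/T)X_{t-1}(t/T)$; this replacement is bounded by Proposition \ref{Prop: stationary approx}(2), whose right-hand side carries a factor $1/T$. Because $\alpha<2$ precludes a finite-variance bound, I would size the resulting heavy-tailed partial sums by Theorem 4.1 of \cite{davis1985limit} (its condition (1.2) holding automatically for $Z_t=|\epsilon_t|$, as remarked after Assumption \ref{ass:1}); together with the factors $1/T$, $(Th)^{-1/2}$ and the bandwidth constraints of Assumption \ref{ass:n4} this makes the replacement error negligible. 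The integrated $O(b_t^3)$ remainder is deterministic once I use that Assumption \ref{ass:3} forces $w_{t-1,T}\|X_{t-1,T}\|^{3}$ to be uniformly bounded, so that $|b_t|^3=O_p((Th)^{-3/2})$ uniformly and, summing against $\sum_tK(\cdot)=O(Th)$, contributes $O_p((Th)^{-1/2})$, the first half of the claimed rate.

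The remaining discrepancy is concentrated in the terms carrying $\beta(t/T)-\beta(u_0)$, which enter through $\widetilde v_{t,T}(u_0)=v-\sqrt{Th}\{\beta(t/T)-\beta(u_0)\}$ and whose weak dependence is the content of Assumption \ref{LemmaA1}. I would organise these into two pieces. The curvature piece $B_{1,T}$ is controlled through the covariance $\mathrm{Cov}(w_{t_1-1}(u_0)X_{t_1-1}(u_0)X_{t_1-1}^{\top}(u_0),\,w_{t_2-1}(u_0)X_{t_2-1}(u_0)X_{t_2-1}^{\top}(u_0))$; summability of these covariances bounds its variance by $O((Th)^{-1})$, giving $B_{1,T}=O_p((Th)^{-1/2})$. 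The bias piece $B_{2,T}$ gathers the cross terms; here, expanding $\beta$ around $u_0$ and using that $K$ is symmetric about zero (Assumption \ref{new_ass:5}(iii)) annihilates the leading odd contribution to the mean, leaving a mean of order $\sqrt{Th}\,h^2=o(h)$, while summability of $\mathrm{Cov}(c_t^{(2)}(u_0),c_{t+s}^{(2)}(u_0))$ bounds the centred part at $O_p(h)$. Collecting $B_{1,T}$ and $B_{2,T}$ delivers $O_p((Th)^{-1/2}+h)$, which is the asserted order in either of its equivalent forms.

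The hard part will be the uniform control of the heavy-tailed stochastic remainders in the absence of a second moment: every variance-type estimate that a finite-variance proof would invoke has to be replaced. The scheme therefore leans on three devices working together --- the boundedness that the self-weight forces through Assumption \ref{ass:3}, which renders $w_{t-1,T}\|X_{t-1,T}\|^{k}$ ($k\le 3$) integrable where the unweighted moments are infinite; the partial-sum asymptotics of \cite{davis1985limit}, which size the stationary-approximation errors; and the summable-covariance Assumption \ref{LemmaA1}, which converts the centred curvature and bias terms into genuine variances. Verifying that the bandwidth conditions of Assumption \ref{ass:n4} keep each of these contributions within $O_p((Th)^{-1/2}+h)$ --- in particular that the heavy-tail sums never dominate --- is the delicate step, and is exactly where the compact-support convention for $g(\cdot)$ mentioned after Assumption \ref{LemmaA1} is convenient.
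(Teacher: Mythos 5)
Your proposal is correct and follows essentially the same route as the paper's own proof: Knight's identity, the split of the integral remainder into its conditional expectation (yielding the $f(0)$ curvature block plus a cubic remainder controlled by the uniform bound of Assumption \ref{ass:3}) and a centred martingale fluctuation whose improved $O_p((Th)^{-1/2})$ rate is obtained from the covariance-summability Assumption \ref{LemmaA1}, exactly as the paper does for $R_{1,T}^{(1)}$ and $R_{1,T}^{(2)}$. The only real deviation is the nonstationary-to-stationary replacement step, which you bound pathwise via Proposition \ref{Prop: stationary approx}(2) together with the heavy-tailed partial-sum Lemma \ref{lem:At} and Assumption \ref{ass:n4} (the technique the paper applies to $\|Q_T-W_T\|$ in the proof of Theorem \ref{thm:1}), whereas the paper's proof of this proposition instead notes that the sign terms $R_{2,T}^{(1)}, R_{2,T}^{(3)}$ have zero mean and invokes Assumption \ref{LemmaA1} to conclude $R_{2,T}^{(i)}=O_p(1/T)$; both arguments stay within the paper's toolkit and render that contribution negligible.
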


Define ${\hat v_{T}^Q(u_0)} := \arg\min_{v}{\widetilde{L}_{T}\left(u_0; v\right)}$, 
and set  $\widehat{\beta}_{T}^Q(u_0)$ by the equation
\begin{align}
	\hat v_{T}^Q(u_0) = \sqrt{Th}\left\{\widehat{\beta}_{T}^{Q}(u_0)-\beta(u_0)\right\}.\label{vQ_def}
\end{align}
We call {$\widehat{\beta}_{T}^{Q}(u_0)$} the quasi-LSWLADE for obvious reasons. We will show in Theorem \ref{theoremA1} that  {$\widehat{\beta}_{T}^{Q}(u_0)$} has the same limiting distribution as $\hat\beta_T(u_0)$. However the
proof  delivers the desired bias of quasi-LSWLADE. There exists a difference  between {$\widehat v_{T}(u_0)$} and {$\widehat v_{T}^Q(u_0)$} but Proposition \ref{remarkA1} quantifies this and enables calculation of complicated higher 
order terms.  

\subsection{Derivative process}
\label{sec:DerProc}

In the following, we consider the stationary approximation of the tvAR($p$) model \eqref{ar}, i.e. 
\begin{align*}
Y_t(u)=\sum_{j=1}^{p}\beta_j(u)Y_{t-j}(u)+\epsilon_t:=\beta(u)^{\top}X_{t-1}(u)+\epsilon_t,
\end{align*}
where $\beta(u)=(\beta_1(u),\ldots,\beta_p(u))^{\top}$ and $X_{t-1}(u)=(Y_{t-1}(u),\ldots,Y_{t-p}(u))^{\top}$, and its expansion (see also \citet{DahhausandRao(2006)})
\begin{align*}
Y_t(s)=Y_t(u)+(s-u)\frac{\partial Y_t(u)}{\partial u}+\frac{(s-u)^2}{2}\frac{\partial^2 Y_t(u)}{\partial u^2}+\frac{(s-u)^3}{6}\frac{\partial^3 Y_t(\overline{u})}{\partial u^3},
\end{align*}
where $\overline{u}\in\left(s,u\right)$. It is interesting to note that the derivative processes fulfill the equations
\begin{align*}
&\frac{\partial Y_t\left(u\right)}{\partial u}=\left(\frac{\partial\beta\left(u\right)}{\partial u}\right)^{\top}X_{t-1}\left(u\right)+\beta\left(u\right)^{\top}\frac{\partial X_{t-1}\left(u\right)}{\partial u},\\
&\frac{\partial^2 Y_t\left(u\right)}{\partial u^2}=\left(\frac{\partial^2\beta\left(u\right)}{\partial u^2}\right)^{\top}X_{t-1}\left(u\right)+2\left(\frac{\partial\beta\left(u\right)}{\partial u}\right)^{\top}\frac{\partial X_{t-1}\left(u\right)}{\partial u}+\beta\left(u\right)^{\top}\frac{\partial^2 X_{t-1}\left(u\right)}{\partial u^2}
\end{align*}
and $\partial^3 Y_t(u)/\partial u^3$ satisfies the relationship in the same manner.
We impose the following assumption on the derivative processes.

\begin{assumption}\label{assumptionA2}
The derivative processes $\partial^i Y_t(u_0)/\partial u^i$, $i=1,2,3$ admit a  MA($\infty$) representations, whose innovation process and MA coefficients satisfy Assumptions \ref{ass:1} and \ref{ass:2}, respectively.
\end{assumption}

This assumption is satisfied if the original process (and the coefficient functions) is sufficiently smooth in  $u$. In other words, the properties of Assumptions \ref{ass:1} and \ref{ass:2} are carried over to the derivative processes as well. In fact, for the simulations, we consider the tvAR(1) model (\ref{eq:example1sim}), which has coefficient function $\beta(t/T)=0.8 \sin(2 \pi t/T)$. For this model, we obtain the MA($\infty$) representation \eqref{def: Def nonstationary process} and its stationary approximation \eqref{def: Def stationary process}, which satisfy Assumptions \ref{ass:1} and \ref{ass:2}, based on the linear combination of products of $\sin$ functions. The same properties are derived for the derivative process based on the linear combination of products of $\sin$ and $\cos$ functions.

\subsection{The bias of the quasi-SWLADE}

The following result shows  the bias of quasi-SWLADE and parts of the proof are postponed to the Appendix
\ref{Sec:proofs for thm2}. If the process was stationary then  $\beta^{\prime}(\cdot) = \beta^{\prime \prime}(\cdot)=0$ and therefore the bias term will be identical 
zero.

\begin{theorem}\label{theoremA1}

Suppose that Assumptions \ref{ass:1}-\ref{assumptionA2} hold and assume the condition for the bandwidth parameter $h$ are the same as in Theorem \ref{thm:1}. Then 
\begin{align*}
\sqrt{Th}\left\{\widehat{\beta}_{T}^Q(u_0)-\beta(u_0)\right\}+\sqrt{Th}h^2 \Sigma(u_0)^{-1}E\{b_{t}(u_0)\} \frac{\int K(x)x^2dx}{2f(0)}
\end{align*}
{has the same limit distribution as in Theorem \ref{thm:1}}, 
where 
\begin{align*}
	b_{t}\left(u_0\right) & =f\left(0\right)\Bigg\{-w_{t-1}\left(u_0\right) \beta^{\prime\prime}\left(u_0\right)^\top X_{t-1}\left(u_0\right)X^\top_{t-1}\left(u_0\right)\\
	&+2g^{\prime}\left(X_{t-1}\left(u_0\right)\right)^\top \frac{\partial X_{t-1}\left(u_0\right)}{\partial u} \beta^{\prime}\left(u_0\right)^\top X_{t-1}\left(u_0\right)X^{T}_{t-1}\left(u_0\right)+4w_{t-1}\left(u_0\right) \beta^{\prime}\left(u_0\right)^\top \frac{\partial X_{t-1}\left(u_0\right)}{\partial u}X^{\top}_{t-1}\left(u_0\right)\Bigg\}.
	\end{align*}
\end{theorem}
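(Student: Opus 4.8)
The plan is to exploit the fact that, unlike the original objective \eqref{objedt_original}, the approximate objective $\widetilde{L}_T(u_0;v)$ is an \emph{exact quadratic} in $v$: its first bracketed term is linear in $v$, while the second is a quadratic form in $\widetilde{v}_{t,T}(u_0)=v-\sqrt{Th}\{\beta(t/T)-\beta(u_0)\}$, which is affine in $v$. Writing $t_0:=\intp[u_0T]$, $d_t:=\sqrt{Th}\{\beta(t/T)-\beta(u_0)\}$, and setting $\widetilde{S}_T:=(Th)^{-1/2}\sum_t K((t-t_0)/(Th))\,w_{t-1}(t/T)X_{t-1}(t/T)\,\mathrm{sign}(\epsilon_t)$, $\mathcal{A}_T:=(Th)^{-1}\sum_t K((t-t_0)/(Th))\,w_{t-1}(t/T)X_{t-1}(t/T)X_{t-1}^\top(t/T)$ and $\mathcal{C}_T:=(Th)^{-1}\sum_t K((t-t_0)/(Th))\,w_{t-1}(t/T)X_{t-1}(t/T)X_{t-1}^\top(t/T)\,d_t$, expansion of the square yields
\begin{align*}
\widetilde{L}_T(u_0;v) = -v^\top \widetilde{S}_T + f(0)\,v^\top \mathcal{A}_T v - 2f(0)\,v^\top \mathcal{C}_T + R_T,
\end{align*}
with $R_T$ independent of $v$. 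Since $\mathcal{A}_T$ is asymptotically positive definite, the first-order condition gives the closed form
\begin{align*}
\hat v_T^Q(u_0) = \frac{1}{2f(0)}\mathcal{A}_T^{-1}\widetilde{S}_T + \mathcal{A}_T^{-1}\mathcal{C}_T,
\end{align*}
so no convexity or epiconvergence argument is needed, and the whole theorem reduces to analysing these three objects.

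First I would establish $\mathcal{A}_T\plim\Sigma(u_0)$. Because $K$ is supported on $[-C_K,C_K]$ (Assumption \ref{new_ass:5}(iii)), only indices with $|t/T-u_0|\le C_K h$ contribute and $(Th)^{-1}\sum_t K((t-t_0)/(Th))\to\int K(x)\,dx=1$; a weak law of large numbers for the weakly dependent, self-weighted summands $w_{t-1}(t/T)X_{t-1}(t/T)X_{t-1}^\top(t/T)$, whose moments are finite under Assumption \ref{ass:3} despite the heavy tails of Assumption \ref{ass:1}, together with the continuity of $u\mapsto\Sigma(u)$, delivers the limit, nonsingular by Assumption \ref{new_ass:5}(ii). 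Next I would show $\widetilde{S}_T\dlim N(0,\kappa_2\Omega(u_0))$ by a martingale central limit theorem: the summands form a martingale difference array, since $\mathrm{sign}(\epsilon_t)$ has mean zero (median zero, Assumption \ref{ass:1}(iii)) and is independent of the $\mathcal{F}_{t-1}$-measurable factor $w_{t-1}(t/T)X_{t-1}(t/T)$, while $\E[w^2 XX^\top]=\Omega(u_0)$ is finite and nonsingular, so the conditional variance $(Th)^{-1}\sum_t K^2((t-t_0)/(Th))\,\E[w_{t-1}^2 X_{t-1}X_{t-1}^\top]\to\kappa_2\Omega(u_0)$. Slutsky then gives $\tfrac{1}{2f(0)}\mathcal{A}_T^{-1}\widetilde{S}_T\dlim N\big(0,\tfrac{\kappa_2}{4f(0)^2}\Sigma^{-1}(u_0)\Omega(u_0)\Sigma^{-1}(u_0)\big)$, exactly the law of Theorem \ref{thm:1}.

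The heart of the argument, and the step I expect to be the main obstacle, is the deterministic cross term $\mathcal{A}_T^{-1}\mathcal{C}_T$, which carries the bias. Here I would expand every $(t/T)$-dependent factor around $u_0$, writing $t/T-u_0=xh$ with $x=(t-t_0)/(Th)$: $\beta(t/T)-\beta(u_0)=\beta'(u_0)(xh)+\tfrac12\beta''(u_0)(xh)^2+O(h^3)$, $X_{t-1}(t/T)=X_{t-1}(u_0)+(xh)\frac{\partial X_{t-1}(u_0)}{\partial u}+O(h^2)$, and $w_{t-1}(t/T)=w_{t-1}(u_0)+(xh)\,g'(X_{t-1}(u_0))^\top\frac{\partial X_{t-1}(u_0)}{\partial u}+O(h^2)$, the derivative processes being well defined and well behaved by Assumption \ref{assumptionA2}. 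Multiplying these expansions out, the $O(h)$ contribution carries an odd power of $x$ and is annihilated by the symmetry of $K$, up to a discretisation-plus-fluctuation remainder of order $o_p(\sqrt{Th}\,h^2)$. The surviving $O(h^2)$ contribution pairs the order-zero factor with $\tfrac12\beta''(u_0)$ and the order-one factors (from both $X_{t-1}$ and $w_{t-1}=g(X_{t-1})$) with $\beta'(u_0)$, producing precisely the three term-groups of $b_t(u_0)$; keeping the transposes and the constants $2$ and $4$ coherent through the matrix products is the delicate bookkeeping. A weak law of large numbers for the weighted averages $(Th)^{-1}\sum_t K\,x^2(\cdot)_t$, valid because the relevant functionals have summable autocovariances by Assumption \ref{LemmaA1}, then yields
\begin{align*}
\mathcal{A}_T^{-1}\mathcal{C}_T = -\sqrt{Th}\,h^2\,\Sigma^{-1}(u_0)\,\E\{b_t(u_0)\}\,\frac{\int K(x)x^2\,dx}{2f(0)} + o_p(\sqrt{Th}\,h^2).
\end{align*}

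Finally I would add the identified bias back, obtaining
\begin{align*}
\hat v_T^Q(u_0) + \sqrt{Th}\,h^2\,\Sigma^{-1}(u_0)\,\E\{b_t(u_0)\}\,\frac{\int K(x)x^2\,dx}{2f(0)} = \frac{1}{2f(0)}\mathcal{A}_T^{-1}\widetilde{S}_T + o_p(\sqrt{Th}\,h^2).
\end{align*}
Since the bandwidth satisfies $Th^3\to0$ we have $\sqrt{Th}\,h^2=\sqrt{Th^5}\to0$, so the remainder is $o_p(1)$ and the right-hand side converges to $N\big(0,\tfrac{\kappa_2}{4f(0)^2}\Sigma^{-1}(u_0)\Omega(u_0)\Sigma^{-1}(u_0)\big)$ by the second paragraph. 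Recalling $\hat v_T^Q(u_0)=\sqrt{Th}\{\widehat{\beta}_T^Q(u_0)-\beta(u_0)\}$ from \eqref{vQ_def} gives the stated conclusion, and Proposition \ref{remarkA1} justifies treating $\widetilde{L}_T$, hence $\widehat{\beta}_T^Q(u_0)$, as a faithful surrogate for the original LSWLADE so that the displayed bias is the one relevant for partial correction.
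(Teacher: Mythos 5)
Your proposal is correct in substance and reaches the stated conclusion, but it is organized differently from the paper's proof. The paper never solves for the minimizer in closed form: it decomposes the objective as $\widetilde{L}_{T}(u_0;v)=\overline{\overline{L}}_{t_0,T}(u_0,v)+B^{(2)}_{1,T}+B_{2,T}+B_T$ with $B_T$ free of $v$, computes means and variances of the $v$-dependent perturbations (the mean of $B^{(2)}_{1,T}$ produces exactly $\sqrt{Th}h^2E\{b_t(u_0)\}v\int K(x)x^2dx$ plus smaller terms, with $\mathrm{Var}(B^{(2)}_{1,T})=O(h^2)$, and similarly for $B_{2,T}$; see \eqref{eq:E(B_{1,T})}, \eqref{eq:B_{1,T}}, \eqref{eq:E(B_{2,T})}, \eqref{eq:B_{2,T}}), shows that the leading part converges pointwise in $v$ to the random quadratic $-v^\top\sqrt{\kappa_2}\Omega(u_0)^{1/2}W+f(0)v^\top\Sigma(u_0)v$, and then reads off the limit of $\hat v^Q_T(u_0)$ from the minimizer of that limit. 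You instead exploit the fact that $\widetilde{L}_T$ is an exact quadratic in $v$, so that $\hat v_T^Q(u_0)=\tfrac{1}{2f(0)}\mathcal{A}_T^{-1}\widetilde{S}_T+\mathcal{A}_T^{-1}\mathcal{C}_T$ in closed form. This buys genuine clarity: the passage from convergence of the objective to convergence of its argmin, which the paper leaves implicit, becomes trivial, and the bias is transparently the deterministic cross term. The underlying computations are nonetheless the same: your CLT for $\widetilde{S}_T$ is the paper's Lemma \ref{lem:3} (McLeish's martingale CLT with $\mathrm{sign}(\epsilon_t)$), your LLN for $\mathcal{A}_T$ is the ergodic-theorem step, and your expansion of $\mathcal{C}_T$ (note $-2f(0)v^\top\mathcal{C}_T$ is exactly the paper's $B^{(2)}_{1,T}$) — odd powers of $t/T-u_0$ killed by symmetry of $K$, the $h^2$ terms pairing $\beta''$ with $w_{t-1}X_{t-1}X_{t-1}^\top$ and $\beta'$ with the $u$-derivatives coming from $w_{t-1}$ and from $X_{t-1}X_{t-1}^\top$, fluctuations controlled via Assumption \ref{LemmaA1} — is literally the paper's proof of \eqref{eq:E(B_{1,T})} and \eqref{eq:B_{1,T}}. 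One organizational point to make explicit: your $\widetilde{S}_T$, $\mathcal{A}_T$, $\mathcal{C}_T$ are evaluated at $t/T$, and transferring them to $u_0$ requires the derivative-process expansions of Assumption \ref{assumptionA2} (this is what the paper's $B_{2,T}$ bounds do), not merely continuity of the deterministic map $u\mapsto\Sigma(u)$.

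One quantitative claim in your third paragraph should be corrected. You assert
\begin{align*}
\mathcal{A}_T^{-1}\mathcal{C}_T=-\sqrt{Th}\,h^2\,\Sigma^{-1}(u_0)\,\E\{b_t(u_0)\}\,\frac{\int K(x)x^2dx}{2f(0)}+o_p(\sqrt{Th}\,h^2),
\end{align*}
and likewise that the odd-order term is annihilated up to $o_p(\sqrt{Th}\,h^2)$. This rate is not attainable under the standing condition $Th^3\to0$: centering the first-order (odd) term at its expectation leaves a stochastic fluctuation whose variance is of order $h^2$ — this is precisely what Assumption \ref{LemmaA1} and the paper's bound $\mathrm{Var}(B^{(2)}_{1,T})=O(h^2)$ deliver — hence a remainder of order $O_p(h)$, and $h$ dominates $\sqrt{Th}\,h^2$ in this regime since $\sqrt{Th}\,h^2/h=\sqrt{Th^3}\to0$. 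The correct statement carries a remainder $O_p(h)+O(\sqrt{Th}\,h^3+\sqrt{h/T})$. This does not damage your conclusion, because the only property used downstream is that the remainder is $o_p(1)$, which $O_p(h)$ certainly is, and the paper's own final display absorbs the same $O_p(h)$ fluctuation into its $o_p(1)$. But the sharper rate you wrote would be needed, and is unavailable here, if one wanted the expansion to be exact at the bias scale $\sqrt{Th}\,h^2$; this is the same limitation the paper concedes when noting that the optimal bandwidth order $T^{-1/3}$ is excluded by its assumptions.
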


%
%
%
%

\section{Simulations}
\label{sec:Simulations}

We examine the finite sample performance of the estimator LSWLADE defined by \eqref{local_SLADE}. To implement the proposed methodology, we will be  using  the Epanechnikov  kernel throughout which is defined by 
$K(u)=0.75(1-u^{2})$, for $|u| \leq 1$, and apply  the weight functions \eqref{eq:w1} and \eqref{eq:w3}.
For the weight function \eqref{eq:w1}, we choose the parameter $c$ as $0.5$ and $0.1$. 
The choice of constant $c$ reflects two different real data situations where we want to ignore less ($c=0.1$) or more ($c=0.5$) amount of data. 
For the weight function \eqref{eq:w3}, we choose $c=q_Y(0.90)$ and $c=q_Y(0.95)$, where $c_Y(\delta)$ is the $\delta$th quantile of 
$\{|Y_{1,T}|,...,|Y_{T,T}|\}$
for an observed stretch $\{Y_{1,T},...,Y_{T,T}\}$.
In addition, we will be using the weight function
\begin{align}
w_{t-1,T} := g(Y_{1,T}, \ldots, Y_{t-1,T}) = \Bigl(1+ \sum_{k=1}^{t-1} k^{-3} |Y_{t-k, T}| \Bigr)^{-2}
\label{eq:weightsPenetal}
\end{align}
as suggested by \citet{Panetal(2007)} in the stationary case.
The simulations are repeated 1000 times for each of the sample sizes {$T=100, 500$ and $1000$}.
Throughout this exercise, the bandwidth parameter $h$ is chosen as $h=\log(T)/T^{3/5}$. This particular choice is implied by the conditions of Theorems \ref{thm:1} and \ref{thm:BS}.

\subsection{Estimation results}

We first investigate the performance of  LSWLADE  by using different weighting schemes. This is  assessed by employing
the {Mean Absolute Error} of the estimator $\hat{\beta}(\cdot)$ for the parameter $\beta(\cdot)$, i.e.
\begin{align}
\mbox{MAE} = \frac{1}{n} \sum_{i=1}^{n} \| \hat{\beta}(t_{i})-\beta(t_{i}) \|_{1}\label{mae}
\end{align}
where $\| x \|_{1}$ denotes  the $l_{1}$ norm,  and $\{t_{i}: i=1,\ldots,n\}$ is a sequence of equidistant points in $[0,1]$.

Initially  consider the following tvAR(1) model
\begin{align}
Y_{t,T}= 0.8 \sin\left(\frac{2 \pi t}{T}\right) Y_{t-1, T}+ \epsilon_{t},
\label{eq:example1sim}
\end{align}
where the errors are assumed to be standard normal, $t$-distributed  with 2 degrees of freedom and Cauchy. 
Under the aforementioned settings, we compare the estimated MAE of the estimators based on the 1000 simulations using 
the overall MAE
\begin{align}
\widehat{\mathrm{MAE}} := \frac{1}{1000}\sum^{1000}_{j=1}\mathrm{MAE}_j, \label{eq:est_mae}
\end{align}
where $\mathrm{MAE}_j$ is the mean absolute error defined as \eqref{mae} for $j$th simulation ($j=1,...,1000$).
We compare the following estimators and the results are given  in Table \ref{tbl_MAE1}.
\begin{itemize}
\item[] L2: the unweighted $L_2$-estimator, which is minimizer of 
$\sum^{T}_{t= p+1}K_h(t-\intp[u_0T])|e_{t-1,T}(\beta)|^2$;
\item[] LAD: the unweighted least absolute deviations-estimator, which is minimizer of
$\sum^{T}_{t= p+1}K_h(t-\intp[u_0T])|e_{t-1,T}(\beta)|$;
\item[] LSW1c1: LSWLADE with the weight function \eqref{eq:w1} and $c=0.5$;
\item[] LSW1c2: LSWLADE with the weight function \eqref{eq:w1} and $c=0.1$;
\item[] LSW2q1: LSWLADE with the weight function \eqref{eq:w3} and $c=q_Y(0.95)$;
\item[] LSW2q2: LSWLADE with the weight function \eqref{eq:w3} and $c=q_Y(0.90)$;
\item[] LSW3:   LSWLADE with the weight function \eqref{eq:weightsPenetal}.
\end{itemize}

\begin{table}[H]
\centering
\caption{Estimated MAE for the model \eqref{eq:example1sim} based on 1000 simulations and using different sample sizes. Minimum in each line is indicated by boldface fonts.}\label{tbl_MAE1}
\medskip
\begin{tabular}{lccccccc}
$\epsilon_t\sim N(0,1)$ & L2 & LAD & LSW1c1 & LSW1c2 & LSW1q1 & LSW1q2 & LSW3\\\hline
$T=100$ 		& \textbf{0.2063} & 0.2166 & 0.2293 & 0.2140 & 0.2193 & 0.2256 & 0.2227 \\
$T=500$ 		& \textbf{0.0937} & 0.1034 & 0.1135 & 0.1025 & 0.1062 & 0.1114 & 0.1092 \\
$T=1000$   	& \textbf{0.0659} & 0.0761 & 0.0864 & 0.0766 & 0.0798 & 0.0847 & 0.0829 \\\hline
$\epsilon_t\sim t_2$&  &  &  &  &  &  &  \\\hline
$T=100$ 		& 0.2279 & 0.2119 & 0.2376 & \textbf{0.2011} & 0.2047 & 0.2087 & 0.2101 \\
$T=500$ 		& 0.1102 & 0.0986 & 0.1143 & \textbf{0.0926} & 0.0929 & 0.0975 & 0.0978 \\
$T=1000$ 	& 0.0792 & 0.0686 & 0.0856 & 0.0667 & \textbf{0.0655} & 0.0703 & 0.0714 \\\hline
$\epsilon_t\sim $Cauchy &  &  &  &  &  &  &  \\\hline
$T=100$ 		& 0.2823 & 0.2456 & 0.2801 & 0.2114 & 0.2103 & \textbf{0.2017} & 0.2123 \\
$T=500$ 		& 0.1461 & 0.1275 & 0.1272 & 0.0923 & 0.0955 & \textbf{0.0905} & 0.0927 \\
$T=1000$ 	& 0.1080 & 0.0914 & 0.0940 & 0.0656 & 0.0614 & \textbf{0.0593} & 0.0660 \\\hline
\end{tabular}
\end{table}

Not surprisingly, the $L_2$ estimator performs the best in the case of Gaussian process. 
However, the self-weighting scheme 
shows the best performance  among estimators when the error terms follow a heavy-tailed distribution. 

In particular, the LSWLADE based on the weight \eqref{eq:w3} with $c=q_Y(0.90)$ performs the best when the error follows Cauchy distribution, while the unweighted LAD estimator has the minimum MAEs and MSEs when the error follows $t_2$ distribution, for large sample sizes. However, LAD estimators do not have the property of  asymptotic normality, and hence, they  are not robust to outliers. In general, it is interesting to study whether a specific function $g$ enjoys optimality  properties in the sense of minimizing the MAE and MSE. The focus of this work is to show that suitable weighting provides estimators with good large sample properties. The choice of an optimal $g$, if it exists,  is an open problem which deserves further attention by  another study. 
\subsection{Equivalence test of parameter values at different time points}\label{ssec:6.1}
This section provides some applications of Theorem \ref{thm:BS} to inference for time-varying models.
We first consider the AR(1) model $Y_{t,T}=  \beta_1(t/T)Y_{t-1, T}+ \epsilon_{t}$,
where $\beta_1(u) = 0.8 \sin(4 \pi u)$.
For user-specified points $u_1, u_2\in(0,1)$, let us focus on the testing problem of hypothesis 
\begin{align}
H: \beta_{1}(u_1) = \beta_{1}(u_2)\text{ against }
A: \beta_{1}(u_1) \neq \beta_{1}(u_2).\notag
\end{align}
This hypothesis testing considers whether the parameter values at $u_1$ and $u_2$ are different or not.
For this problem, we construct a test statistic as follows.
By the proof of Theorem \ref{thm:BS} and Cram\'er-Wold device, 
we can show that, under $H$, 
\begin{align}
\sqrt{Th}\left(\hat\beta_T(u_1) - \hat\beta_T(u_2)\right)
&=\sqrt{Th}\left(\hat\beta_T(u_1) - \beta(u_1)\right) - \sqrt{Th}\left(\hat\beta_T(u_2) - \beta(u_2)\right)\notag\\
&\dlim N(0, \sigma_1^2+\sigma_2^2 - 2\sigma_{12})
\text{ },\label{BS_exp_1:eq1}
\end{align}
where 
$\sigma_i^2$ is the asymptotic variance of $\hat\beta_T(u_i)$ ($i=1,2$) given in Theorem \ref{thm:1},
and $\sigma_{12}$ is the asymptotic covariance of $\hat\beta_T(u_1)$ and $\hat\beta_T(u_2)$. 
The asymptotic distribution \eqref{BS_exp_1:eq1} leads the test statistic
$\hat{M}_{T,M} := Th|\hat\beta_T(u_1) - \hat\beta_T(u_2)|^2/(\hat{\xi}_{T,M}^{*})^2$, 
where 
\begin{align}
(\hat{\xi}_{T,M}^{*})^2
&= \frac{Th}{M}\sum^{M}_{k=1}\bigg[\left(\hat\beta_T^{*(k)}(u_1) - \hat\beta_T(u_1)\right)^2
+\left(\hat\beta_T^{*(k)}(u_2) - \hat\beta_T(u_2)\right)^2\notag\\
&\quad\quad\quad\quad\quad -2\left(\hat\beta_T^{*(k)}(u_1) - \hat\beta_T(u_1)\right)
\left(\hat\beta_T^{*(k)}(u_2) - \hat\beta_T(u_2)\right)\bigg],\notag
\end{align}
and the notation $\hat\beta_T^{*(k)}(\cdot)$ denotes the multiplier bootstrap estimator obtained by \eqref{BS_local_SLADE} and $M$ is the numbers of bootstrap replications. 
Under $H$, it is shown that $\hat{M}_{T,M} \dlim \chi^2_1$. 
Therefore, we reject the null hypothesis if $\hat{M}_{T,M}$ exceeds a quantile of $\chi^2_1$ distribution corresponding to the user-specified significance level.
On the other hand, we can show that $\hat{M}_{T,M}$ diverges to infinity in probability under $A$. 
Thus, the proposed test is consistent.

We check the finite sample performance of the test based on $\hat{M}_{T,M}$ under the following setting.
The time point $u_1=0.2$ is fixed ($\beta_{1}(u_1) \approx 0.4702$), and $u_2$ is chosen as $u_2 = 0.7, 0.75$ and $0.8$. 
The corresponding $\beta_{1}(u_2)$ are shown in Table \ref{tbl:BS1}.

\begin{table}[htbp]
\centering
\caption{Approximated values of $\beta_{1}(u_2)$}\label{tbl:BS1}
\begin{tabular}{cccc}
\hline
$u_2$ 						& 0.70 & 0.75 & 0.80  \\
$\beta_{1}(u_2)$ 	& 0.4702 & 0.0000 & $-0.4702$ \\\hline
\end{tabular}	
\end{table}

The case $u_2 = 0.7$ corresponds to the null hypothesis, while 
$u_2 = 0.75$ and $0.8$ correspond to the alternatives.
The sample size is $T=100, 500$ and $1000$, and the number of bootstrap is $M=1000$.
The error distribution is one of $N(0,1)$, $t_2$ and Cauchy.
We use the weight function labeled as \texttt{LSW2q2} in the previous section. 
For the significance level $\delta = 0.1$ or $0.05$, the empirical rejection rates based on 1000 replications are shown in the following Table \ref{tbl:BS2}. 
As sample size grows, the actual sizes of proposed test under the null hypothesis (columns $u_2=0.7$) approach satisfactory 
the corresponding nominal levels.
On the other hand, the powers of the test (columns $u_2=0.75,0.8$) increase rapidly in all cases.
In most cases, the test has higher powers when the error term follows $t_2$ or Cauchy distribution. 

\begin{table}[htbp]
\centering
\caption{Empirical rejection rate of $\hat{M}_{T,M}$-test}\label{tbl:BS2}
\medskip
\begin{tabular}{ll}
\begin{tabular}{llccc}
	\multicolumn{5}{l}{$\epsilon_t\sim N(0,1)$, $\delta = 0.100$}\\
	$u_2$ &			& 0.70 & 0.75 & 0.80 \\\hline
	$T$ 	& 100 & 0.065 & 0.072 & 0.099 \\
	& 500 & 0.082 & 0.312 & 0.779 \\
	& 1000 & 0.074 & 0.556 & 0.977 \\\hline
\end{tabular}
&
\begin{tabular}{llccc}
	\multicolumn{5}{l}{$\epsilon_t\sim N(0,1)$, $\delta = 0.050$}\\
	$u_2$ &			& 0.70 & 0.75 & 0.80 \\\hline
	$T$ 	& 100 & 0.041 & 0.045 & 0.058 \\
	& 500 & 0.036 & 0.216 & 0.677 \\
	& 1000 & 0.035 & 0.427 & 0.945 \\\hline
\end{tabular}
\\\\
\begin{tabular}{llccc}
	\multicolumn{5}{l}{$\epsilon_t\sim t_2$, $\delta = 0.100$}\\
	$u_2$ &			& 0.70 & 0.75 & 0.80 \\\hline
	$T$ 	& 100 & 0.085 & 0.096 & 0.118 \\
	& 500 & 0.103 & 0.309 & 0.763 \\
	& 1000 & 0.096 & 0.596 & 0.973 \\\hline
\end{tabular}
&
\begin{tabular}{llccc}
	\multicolumn{5}{l}{$\epsilon_t\sim t_2$, $\delta = 0.050$}\\
	$u_2$ &			& 0.70 & 0.75 & 0.80 \\\hline
	$T$ 	& 100 & 0.045 & 0.054 & 0.069 \\
	& 500 & 0.053 & 0.218 & 0.657 \\
	& 1000 & 0.042 & 0.449 & 0.945 \\\hline
\end{tabular}
\\\\
\begin{tabular}{llccc}
	\multicolumn{5}{l}{$\epsilon_t\sim$ Cauchy, $\delta = 0.100$}\\
	$u_2$ &			& 0.70 & 0.75 & 0.80 \\\hline
	$T$ 	& 100 & 0.108 & 0.116 & 0.127 \\
	& 500 & 0.109 & 0.384 & 0.856 \\
	& 1000 & 0.087 & 0.703 & 0.996 \\\hline
\end{tabular}
&
\begin{tabular}{llccc}
	\multicolumn{5}{l}{$\epsilon_t\sim$ Cauchy, $\delta = 0.050$}\\
	$u_2$ &			& 0.70 & 0.75 & 0.80 \\\hline
	$T$ 	& 100 & 0.071 & 0.071 & 0.088 \\
	& 500 & 0.063 & 0.273 & 0.778 \\
	& 1000 & 0.047 & 0.598 & 0.986 \\\hline
\end{tabular}
\end{tabular}
\end{table}

\begin{remark}
We can also construct a test statistic
\[
\tilde{M}_{T,M} := \sqrt{Th}(\hat\beta_T(u_1) - \hat\beta_T(u_2))/\hat{\xi}_{T,M}^*,
\] 	
which converges to $N(0,1)$. 
On the other hand, the statistic $\hat{M}_{T,M}$ is easily extended to the multivariate parameter case ($p\geq2$).
\end{remark}

\section{Real data analysis}\label{sec:Data analysis}
\blue{In this section, 
we study a  data example  and apply the bootstrap statistic introduced in subsection \ref{ssec:6.1}.
Figure \ref{fig:1_realdata} shows the time series plot for $s_t$ and $y_t = \log s_{t+1}/s_{t}$ ($t=1,...,2515$) in the left and right panels, respectively, where $s_t$  denotes  the daily closing stock price of Microsoft company
from October 25, 2013 to October 24, 2023.
\begin{figure}[htbp]
	\centering
	\caption{Stock price $s_t$ of Microsoft (left) and its log-stock return process $y_t$ (right)}\label{fig:1_realdata}
	\medskip
	\begin{tabular}{cc}
		\includegraphics[width = 0.3\hsize]{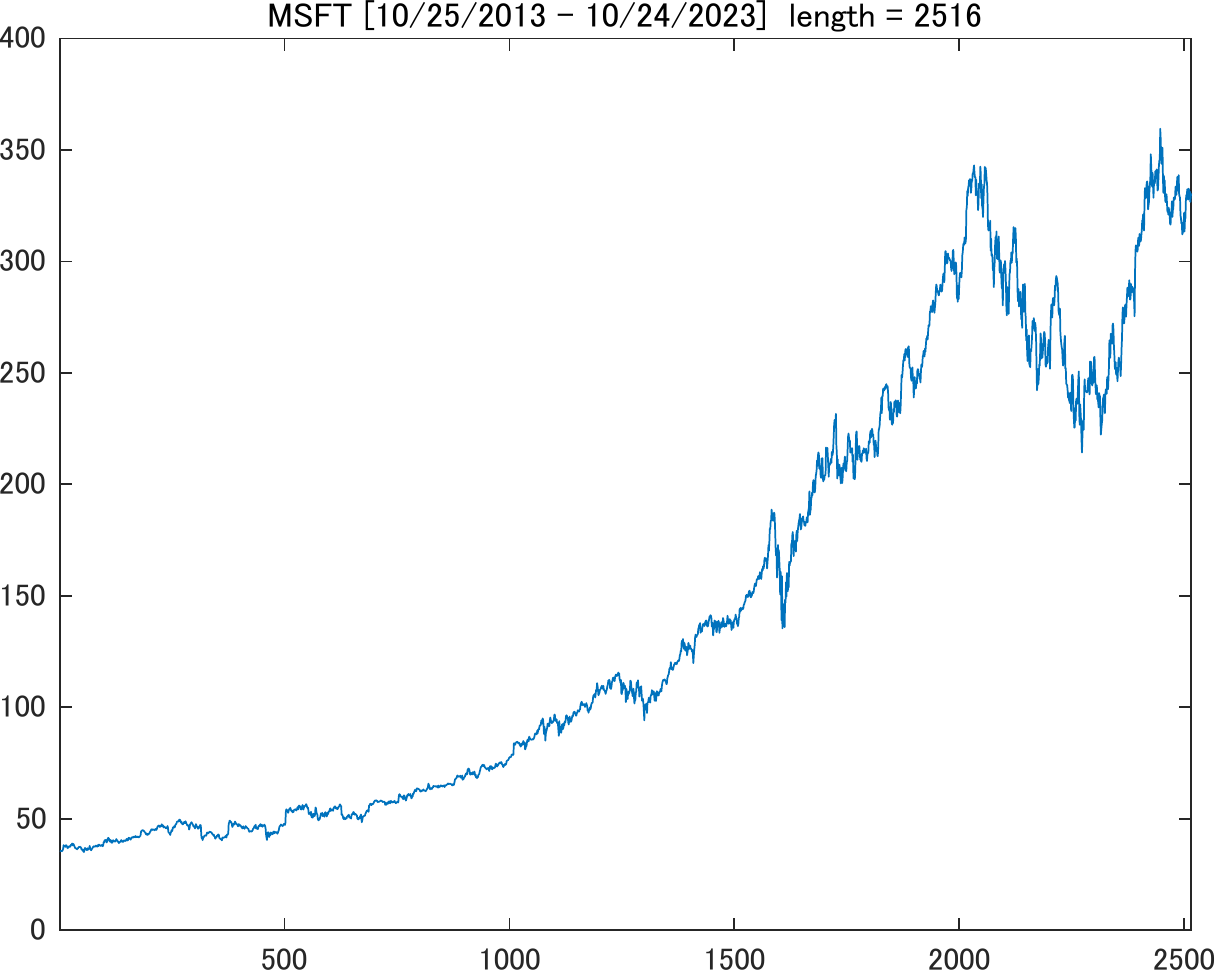} & 
		\includegraphics[width = 0.3\hsize]{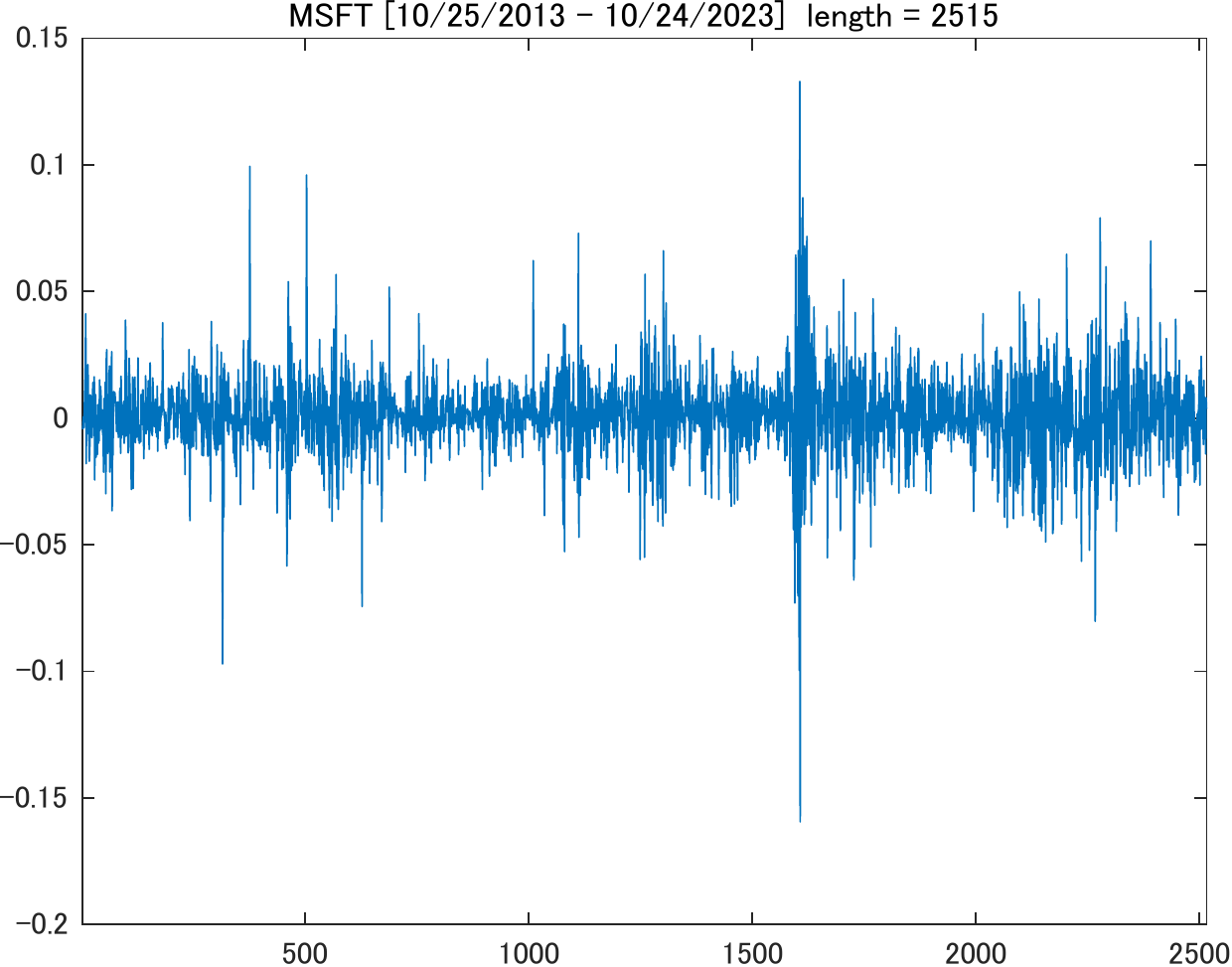}				
	\end{tabular}
\end{figure}
To study the tail-behavior of this data, consider  the Hill-estimator given by 
\begin{align}
	H_L(k) := \left\{\frac{1}{k} \sum_{i=1}^k \log \left(\frac{y_{(n-i+1)}}{y_{(n-k)}}\right)\right\}^{-1}
	\text{ and }H_R(k) := \left\{\frac{1}{k} \sum_{i=1}^k \log \left(\frac{y_{(i)}}{y_{(k+1)}}\right)\right\}^{-1}, \notag
\end{align}
where $y_{(1)}\geq\cdots \geq y_{(n)}$ are the order statistics of $y_t$. 
For the details of Hill-plot, for example, see \cite{Resnick1997}.
The Hill-estimator is known as a consistent estimator to the tail-index of the distribution, where the left- and right-tail indices $\alpha_L$ and $\alpha_R$ of $y_t$ are defined as positive real numbers satisfying 
\begin{align}
	P(y_t < x) = L'(x)x^{-\alpha_L}\text{ and }P(y_t > x) = L''(x)x^{-\alpha_R},\label{eq:17}
\end{align}
for slowly varying functions $L'$ and $L''$.
The left and right panels of Figure \ref{fig:2_realdata} show the Hill-plot for the left and right tails of the data $y_t$, respectively,  and compare the Hill-estimator with those for the $t$-distributions with degrees of freedom $5,2$ and $1$.
\begin{figure}[htbp]
	\centering
	\caption{Hill-plot for $y_t$ (solid line) and for $t$-distributions (dashed lines)}\label{fig:2_realdata}
	\medskip
	\begin{tabular}{cc}
		\includegraphics[width = 0.3\hsize]{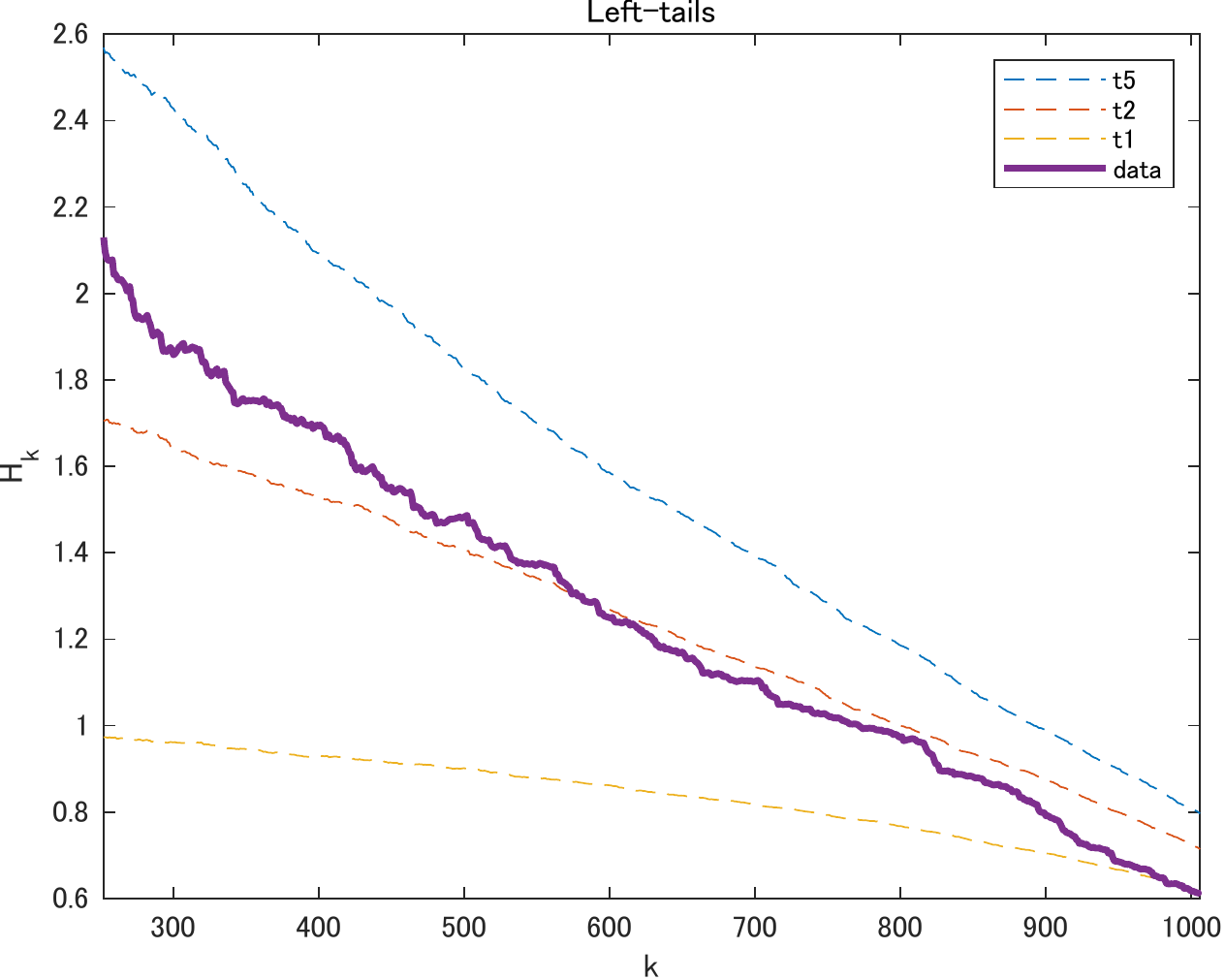} &
		\includegraphics[width = 0.3\hsize]{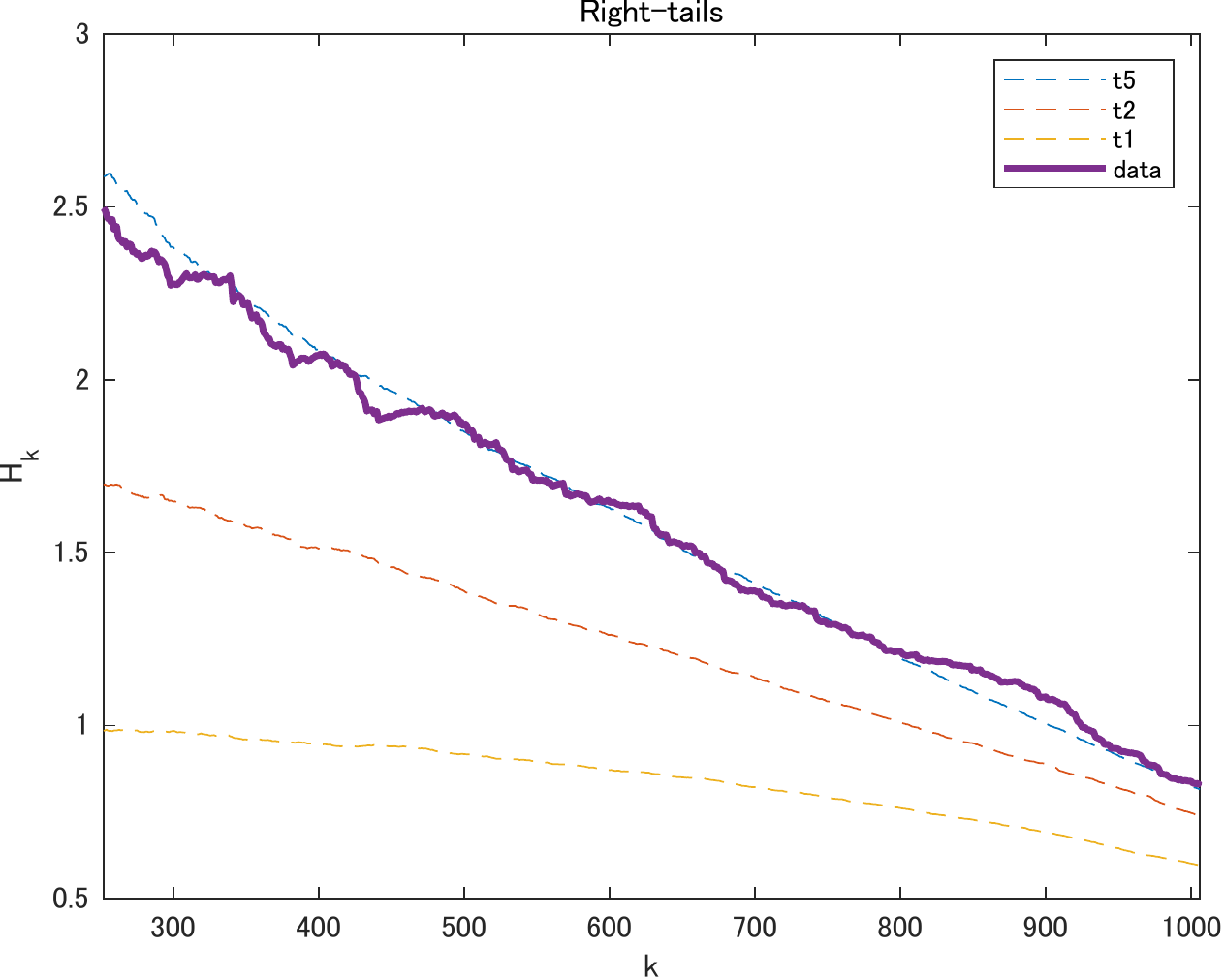} 				
	\end{tabular}
\end{figure}
We observe that the left-tail of  data is heavier than that of $t_2$-distribution.
Here we state the relationship between $\alpha_L, \alpha_R$ in \eqref{eq:17} and the tail-index of our Assumption \ref{ass:1}.
Without loss of generality, assume that $\alpha_L\leq\alpha_R$. Then, we have
\[
P(|y_t| < x) = P(y_t< -x) + P(y_t>x) = \tilde{L}(x)x^{-\alpha_L},
\]
where $\tilde{L}(x):=(L'(x) + L''(x)x^{-(\alpha_R-\alpha_L)})x^{-\alpha_R}$. 
By Karamata's representation theorem, $\tilde{L}$ is also a slowly varying.
Then, this data satisfies Assumption \ref{ass:1} with $\alpha = \alpha_L$.
Hence it is natural to assume that the data was generated from the model with infinite variance.
}

We fit the VAR(2) model to the data $y_t$ ($t=1,...,2515$), and the estimated values of $(\hat\beta_1(u), \hat\beta_2(u))$
are plotted in Figure \ref{fig:pointestimates} for some $u\in(0,1)$. Following the notation of Section  \ref{sec:Simulations}, 
we employ the self-weighted estimator  \texttt{LSW2q2} with the same bandwidth parameter $h$.

\begin{figure}[htbp]
\centering
\caption{Estimated values of $(\hat\beta_1(u), \hat\beta_2(u))$ for $u=k/100$, $k=10,11,...,90$}\label{fig:pointestimates}
\includegraphics[width=0.3\hsize]{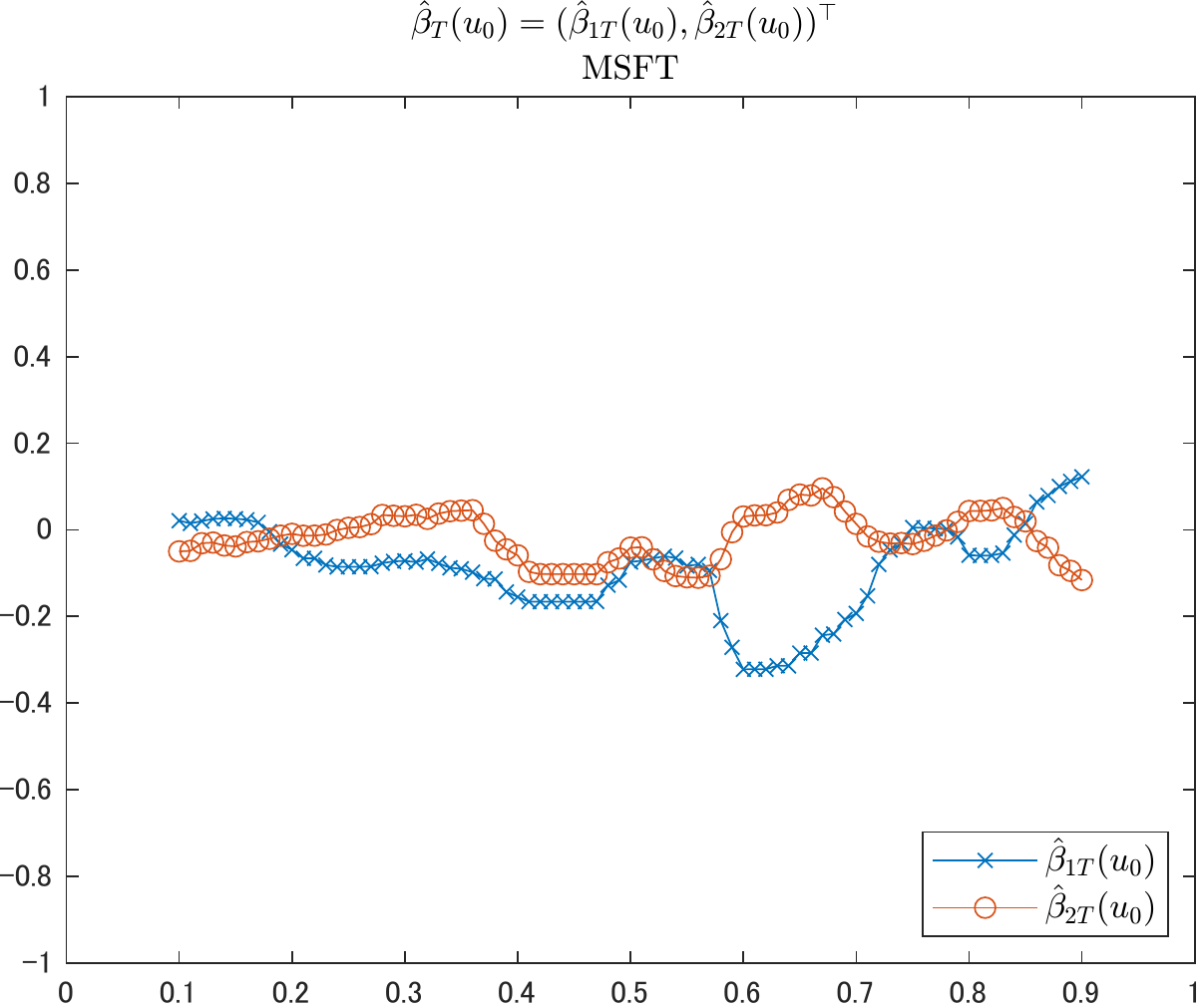}
\end{figure}

We test the null-hypothesis 
\begin{align}
H: (\beta_{1}(u_1), \beta_{2}(u_1)) = (\beta_{1}(u_2), \beta_{2}(u_2))\label{eq:7.1_hypo}
\end{align}
for various pairs of 
$(u_1, u_2)\in\{(v_1,v_2): v_i\in \{0.1,0.2,...,0.9\}, v_1< v_2\}$.
We use a test statistic defined as
$\hat{M}_{T,M}^{(2)}(u_1,u_2) := Th (\hat\beta_T(u_1) - \hat\beta_T(u_2))^\top \left(\hat\Xi_{T,M}^{*(2)}\right)^{-1}
(\hat\beta_T(u_1) - \hat\beta_T(u_2))$,
where $\hat\Xi_{T,M}^{*(2)}$ is the bootstrap estimator for the asymptotic covariance matrix of $\sqrt{Th}(\hat\beta_T(u_1) - \hat\beta_T(u_2))$ under $H$.
The number of bootstrap is $M=5000$.
If \eqref{eq:7.1_hypo} is rejected, we put a marker ``$*$'' on the corresponding cell of Table \ref{tbl:rejectedpoints}. 

\begin{table}[htbp]
\centering
\caption{Pairs of $(u_1, u_2)$ for  which the hypothesis \eqref{eq:7.1_hypo} is rejected}\label{tbl:rejectedpoints}
\medskip
\scalebox{0.7}[0.7]{
\begin{tabular}{cc}
	\begin{tabular}{cc|ccccccccc}
		\multicolumn{5}{l}{Significance level: 10\%}\\
		&& $u_2$\\
		&	  		  	& 0.1 & 0.2 & 0.3 & 0.4 & 0.5 & 0.6 & 0.7 & 0.8 & 0.9 \\\hline
		$u_1$ 	& 0.1 	& -   &     &     &     &     & $*$ & $*$ &     &     \\ 
		& 0.2 	& -   & -   &     &     &     & $*$ &     &     & $*$ \\
		& 0.3 	& -   & -   & -   &     &     & $*$ &     &     & $*$ \\
		& 0.4 	& -   & -   & -   & -   &     &     &     &     & $*$ \\
		& 0.5 	& -   & -   & -   & -   & -   & $*$ &     &     & $*$ \\
		& 0.6 	& -   & -   & -   & -   & -   & -   &     & $*$ & $*$ \\
		& 0.7 	& -   & -   & -   & -   & -   & -   & -   &     & $*$ \\
		& 0.8 	& -   & -   & -   & -   & -   & -   & -   & -   &     \\
		& 0.9 	& -   & -   & -   & -   & -   & -   & -   & -   & -   \\
	\end{tabular}
	&
	\begin{tabular}{cc|ccccccccc}
		\multicolumn{5}{l}{Significance level: 1\%}\\
		&& $u_2$\\
		&				& 0.1 	& 0.2 & 0.3 & 0.4 & 0.5 & 0.6 & 0.7 & 0.8 & 0.9 \\\hline
		$u_1$	& 0.1 	& -   &     &     &     &     &     &     &     &     \\ 
		& 0.2 	& -   & -   &     &     &     &     &     &     &     \\
		& 0.3 	& -   & -   & -   &     &     &     &     &     & $*$ \\
		& 0.4 	& -   & -   & -   & -   &     &     &     &     & $*$ \\
		& 0.5 	& -   & -   & -   & -   & -   &     &     &     &     \\
		& 0.6 	& -   & -   & -   & -   & -   & -   &     &     & $*$ \\
		& 0.7 	& -   & -   & -   & -   & -   & -   & -   &     & $*$ \\
		& 0.8 	& -   & -   & -   & -   & -   & -   & -   & -   &     \\
		& 0.9 	& -   & -   & -   & -   & -   & -   & -   & -   & -   \\
	\end{tabular}
\end{tabular}
} 
\end{table}

We observe that the hypothesis \eqref{eq:7.1_hypo} is rejected for several pairs  $(u_1,u_2)$ by using a  10\% level test. 
For the 1\%  level tests, the hypotheses with $(u_1, u_2) = (0.3, 0.9), (0.4, 0.9), (0.7, 0.9), (0.8, 0.9)$ are rejected.
So, there is enough evidence to conclude that $\beta(0.9)$ is different from $\beta(u)$, $u\neq0.9$.
To check this, we consider the multiple hypotheses
$H_k: \beta(v_k)=\beta(0.9)\quad (k\neq 9)$, 
where $v_k = k/10$ ($k\in\{1,...,9\}$).
To ensure the desired level 0.01, we apply the Bonferoni correction to the critical value of test.
The number of multiple test is now 8, so the corrected critical value is given by the upper-$(0.01/8)$ ($\approx 0.0013$) quantile of chi square distribution with degrees of freedom 2, which is approximately $13.3692$.
On the other hand, the values of statistic at $(u_1,u_2) = (v_k,0.9)$ for $k=1,...,8$ are shown in the second column of Table \ref{tbl:stats}. 

\begin{table}[htbp]
\centering
\caption{The values of test statistics}\label{tbl:stats}
\medskip
\begin{tabular}{ccccccccc}
\hline
$k$ 	& 1 & 2 & 3 & 4 & 5 & 6 & 7 & 8  \\
$\hat{M}_{T,M}^{(2)}(v_k,0.9)$
& 1.8838 & 5.3397 & 9.4519 & 9.9399 & 5.2103 & 18.4132 & 16.3508 & 4.4982  \\\hline
\end{tabular}
\end{table}

From this result, the multiple hypothesis that $\beta_1(v_k)=\beta_2(0.9)$ for all $k=1,...,8$ is rejected.
This results seem to agree with the behavior of the raw data (the left panel of Figure \ref{fig:1_realdata}),
because the data appears to exhibit a structural change in the latter part of the time period.
Therefore, it is reasonable to conclude that the structure of the underlying data generating process changes around $u=0.9$.
Next, we focus on the interval $[0.1,0.8]$, and consider the multiple test of the form
\begin{align}
H_k: \beta_1(v_k)=\beta_2(v_l)\quad (k< l, k,l\in\{1,...,8\}), 
\label{eq:hypo_3}
\end{align}
which is a correction of 28 hypotheses. 
In this case, the corrected critical values for 10\% and 1\% tests are, respectively,
11.2696 and 15.8747.
All values of the test statistics for \eqref{eq:hypo_3} are shown 
in Table \ref{tbl:Mn_val}.

\begin{table}[htbp]
\centering
\caption{The values of $\hat{M}_{T,M}^{(2)}(v_k,v_l)$}\label{tbl:Mn_val}
\begin{tabular}{cc|cccccccc}
&& $l$\\
&			& 1	& 2 & 3 & 4 & 5 & 6 & 7 & 8 \\\hline
$k$	& 1 & - & 0.5638 & 1.8859 & 2.7362 & 0.7169 & 8.3539 & 5.1587 & 1.0421 \\
& 2 & - & - & 0.4812 & 1.6114 & 0.2071 & 6.3395 & 3.1606 & 0.4180 \\
& 3 & - & - & - & 1.8033 & 0.8038 & 5.4134 & 2.0532 & 0.0676 \\
& 4 & - & - & - & - & 0.5993 & 3.2967 & 0.9617 & 2.6651 \\
& 5 & - & - & - & - & - & 4.6086 & 2.2793 & 0.9248 \\
& 6 & - & - & - & - & - & - & 1.4292 & 5.2699 \\
& 7 & - & - & - & - & - & - & - & 2.0573 \\
& 8 & - & - & - & - & - & - & - & - \\\hline
\end{tabular}
\end{table}

Both at 10\% and 1\% levels, the tests do not reject the hypotheses \eqref{eq:hypo_3},
so we can not conclude that there is any structural changes in $u\in\{0.1,0.2,...,0.8\}$.
On the other hand, we observe the largest  jump around $t=1600$, which corresponds to $u=1600/2515\approx 0.63$. 
These findings indicate that it is likely  that this jump is caused by the outlier, and not by structural changes.
As a conclusion, the nonstationary heavy-tailed model is suitable for such data because stationary heavy-tailed models 
will not identify a  parameter change at $u=0.9$. In addition, applying a locally stationary time series models but with
finite variance will indicate a structural change around $u=0.6$.
In our approach,  we apply  model which has both structural change and infinite variance, and a robust testing procedure. So we can distinguish the different types of behavior of the data (heavy-tails and structural changes).

\if1\blind
{
\subsection*{Acknowledgments}
The first author was supported by The JSPS KAKENHI Grant Numbers JP20K13467 and JP24K04815.
The third author was supported by The JSPS KAKENHI Grant Number JP23K11004.
}
\fi
\if0\blind
{

} \fi



\ifx\undefined\bysame
\newcommand{\bysame}{\hskip.3em \leavevmode\rule[.5ex]{3em}{.3pt}\hskip0.5em}
\fi

\newpage

\renewcommand{\theequation}{A-\arabic{equation}}
\renewcommand{\thelemma}{A-\arabic{lemma}}
\setcounter{equation}{0}
\setcounter{lemma}{0}
\renewcommand{\thesubsection}{A-\arabic{subsection}}
\setcounter{subsection}{0}
\section*{Appendix}\pagenumbering{arabic}

\small

\subsection{An auxiliary lemma}

This section gives some auxiliary lemmas and proofs of main results.
Throughout this section, denote $\zeta_T := \sqrt{Th}$. Define a non-negative  stochastic process
\begin{align}
A_t := \sum^{\infty}_{j=0}l_j^{-1}|\epsilon_{t-j}|,\notag
\end{align}
where $\{l_j\}$ is an identical  sequence of coefficients as introduced Assumption \ref{ass:2}. Then,  $\{A_t\}$ exists and is finite almost surely.
\begin{lemma}\label{lem:At}
Suppose that Assumptions \ref{ass:1}, \ref{ass:2} and \ref{ass:n4} hold.
Then
$\sum^{n}_{t=1}A_t=O_p(a_n+ nb_n)$ and $\sum^{n}_{t=p+1}A_t^2=O_p(a_n^2)$.
\end{lemma}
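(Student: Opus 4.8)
The plan is to treat both sums as partial sums of the nonnegative linear process $A_t=\sum_{j\ge0}l_j^{-1}|\epsilon_{t-j}|$ and to exploit the regular variation that $A_1$ inherits from $\{|\epsilon_t|\}$. First I would record two elementary consequences of Assumption \ref{ass:2}: since $\delta<1$ and $l_j^{-1}\to0$, one has $l_j^{-1}\le (l_j^{-1})^\delta$ and $(l_j^{-1})^\alpha\le (l_j^{-1})^\delta$ for all large $j$, so both $\sum_j l_j^{-1}<\infty$ and $\sum_j (l_j^{-1})^\alpha<\infty$. The latter summability is exactly what transfers regular variation through a (possibly infinite) linear filter: by \cite{cline1983estimation} (see also \citet{davis1985limit}), $A_1$ is regularly varying with index $\alpha$ and $P(A_1>x)\sim\big(\sum_{j\ge0}(l_j^{-1})^\alpha\big)P(|\epsilon_1|>x)$ as $x\to\infty$. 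Consequently the upper quantile $q_n:=\inf\{x:P(A_1>x)\le1/n\}$ satisfies $q_n\le C a_n$ for large $n$ and some constant $C$.

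For the first claim I would invoke Theorem 4.1 of \citet{davis1985limit} applied to the linear process $A_t$. Its hypotheses hold here: the innovations $Z_t=|\epsilon_t|$ are regularly varying by Assumption \ref{ass:1}(ii), the tail-balance condition (1.2) is automatic because $Z_t\ge0$, and the coefficient summability $\sum_j(l_j^{-1})^\delta<\infty$ is Assumption \ref{ass:2}. The theorem yields $a_n^{-1}\big(\sum_{t=1}^nA_t-d_n\big)\dlim S$ for a stable limit $S$, where the centering is $d_n=0$ when $\alpha<1$ and $d_n=n\,\E[A_1\mathbb{I}(A_1\le q_n)]$ when $1\le\alpha<2$. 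A short computation using Assumption \ref{ass:1}(ii) shows $\E[A_1\mathbb{I}(A_1\le q_n)]=O(b_n)$ in the latter range (it tends to $\E A_1$ when $\alpha>1$, and is of the order of $b_n\asymp\int_0^{a_n}P(|\epsilon_1|>x)\,dx$ when $\alpha=1$), so $d_n=O(nb_n)$. Tightness from the weak limit then gives $\sum_{t=1}^nA_t=d_n+O_p(a_n)=O_p(a_n+nb_n)$.

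The second claim rests on the fact that squaring pushes the tail index below one. Since $A_1$ is regularly varying with index $\alpha\in(0,2)$, the variable $A_1^2$ is regularly varying with index $\alpha/2\in(0,1)$, and its upper quantile $w_n:=\inf\{x:P(A_1^2>x)\le1/n\}=q_n^2\le C^2a_n^2$. I would then bound $\sum_{t=1}^nA_t^2$ by a truncation argument that uses only this marginal tail together with stationarity, so that the dependence of $\{A_t^2\}$ never enters. Split $A_t^2=A_t^2\mathbb{I}(A_t^2\le w_n)+A_t^2\mathbb{I}(A_t^2> w_n)$. For the bulk, nonnegativity lets me apply Markov's inequality to $\E\big[\sum_{t=1}^nA_t^2\mathbb{I}(A_t^2\le w_n)\big]=n\,\E[A_1^2\mathbb{I}(A_1^2\le w_n)]$, which is $O(w_n)$ by Karamata's theorem because the index $\alpha/2$ is strictly less than one. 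For the exceedances I would use $\sum_{t=1}^nA_t^2\mathbb{I}(A_t^2>w_n)\le N_n\max_{t\le n}A_t^2$, where $N_n:=\#\{t\le n:A_t^2>w_n\}$ has $\E N_n=nP(A_1^2>w_n)\le1$ (so $N_n=O_p(1)$), while the union bound $P(\max_{t\le n}A_t^2>\lambda w_n)\le nP(A_1^2>\lambda w_n)\to\lambda^{-\alpha/2}$ gives $\max_{t\le n}A_t^2=O_p(w_n)$. Hence the exceedance part is $O_p(w_n)$ as well, and $\sum_{t=1}^nA_t^2=O_p(w_n)=O_p(a_n^2)$. Finally, $\sum_{t=p+1}^nA_t^2$ differs from $\sum_{t=1}^nA_t^2$ by the fixed, almost surely finite quantity $\sum_{t=1}^pA_t^2=O_p(1)$, which is negligible relative to $a_n^2$.

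The main obstacle is the first step: establishing regular variation of the infinite moving average $A_1$ with the correct constant, which is what legitimizes both the identification of the normalizations $q_n\asymp a_n$ and $w_n\asymp a_n^2$ and the direct appeal to \citet[Thm.~4.1]{davis1985limit}. Once that is in place, the nonnegativity of $A_t$ is decisive: it replaces any second-moment or covariance control (which would be delicate since $\{A_t^2\}$ is a dependent, infinite-variance sequence) by one-sided Markov and union bounds, so the heavy-tailed dependence structure plays no role in the orders claimed.
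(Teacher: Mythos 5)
Your proposal is correct. For the first claim you follow essentially the paper's own route: both arguments verify the hypotheses of \citet[Thm.~4.1]{davis1985limit} for the nonnegative moving average $A_t$ (regular variation of $Z_t=|\epsilon_t|$, the tail-balance condition being automatic for nonnegative innovations, and the coefficient summability $\sum_j (l_j^{-1})^\delta<\infty$ from Assumption \ref{ass:2}) and then read off $\sum_{t\le n}A_t=O_p(a_n+nb_n)$ from the stable limit. The only difference there is presentational: the paper keeps the theorem's innovation-based centering $n b_n\sum_j l_j^{-1}$, so the bound $O_p(a_n+nb_n)$ is immediate, whereas you restate the centering as the truncated mean $n\E[A_1\mathbb{I}(A_1\le q_n)]$ and then need the extra (correct) computation showing this is $O(nb_n)$; since the two centerings differ by $O(a_n)$, the conclusion is the same. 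For the second claim the routes genuinely diverge. The paper disposes of $\sum_{t}A_t^2=O_p(a_n^2)$ in one line by citing \citet[Thm.~4.2]{davis1985limit}. You instead first make explicit the marginal tail equivalence $P(A_1>x)\sim\bigl(\sum_j(l_j^{-1})^\alpha\bigr)P(|\epsilon_1|>x)$ (Cline/Davis--Resnick), so that $A_1^2$ is regularly varying with index $\alpha/2<1$ and has $1/n$-quantile $w_n\asymp a_n^2$, and then run a self-contained truncation argument: Karamata plus Markov for the bulk below $w_n$, and a union bound together with an $O_p(1)$ exceedance count above it. This uses only identical marginals and nonnegativity — the dependence of $\{A_t^2\}$ never enters — so it is more elementary and more transparent about \emph{why} no centering is needed when the tail index drops below one; the paper's appeal to Theorem 4.2 buys brevity but delegates exactly this verification to the cited reference. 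Both treatments of the edge terms ($t\le p$) are trivial by nonnegativity.
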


\begin{proof}
We check the conditions of \citet[Thms 4.1 and 4.2]{davis1985limit}.
By Assumption \ref{ass:2}, the coefficients $\{l_j^{-1}\}$ satisfy the summability condition (2.6) of \cite{davis1985limit}.
Moreover, under Assumption \ref{ass:1}, $Z_t := |\epsilon_t|$ satisfies the conditions (1.1) and (1.2) of \cite{davis1985limit}. 
Then, $a_n^{-1}(\sum^{n}_{t=1} A_t - n b_n(\sum^{\infty}_{j=0}l_j^{-1}))$ converges to a stable random variable when $n\to \infty$.
Therefore, we have
\begin{align}
\sum^{n}_{t=p+1}A_t
&= a_n\left(\frac{\sum^{n}_{t=1} A_t - n b_n\left(\sum^{\infty}_{j=0}l_j^{-1}\right)}{a_n}\right)
+ \left(\sum^{\infty}_{j=0}l_j^{-1}\right)nb_n.\notag
\end{align}
Then we get the desired result. We can also show the result for $\sum^{n}_{t=1}A_t^2$ by Theorem 4.2 of \cite{davis1985limit}.
\end{proof}

\subsection{Proofs of Propositions \ref{Prop: stationary approx} and \ref{prop:2}}

\begin{proof}[Proof of Proposition \ref{Prop: stationary approx}]
\begin{enumerate}
\item By arguing as in \citet[Prop. 13.3.1-13.3.2]{BrockwellandDavis(1991)} and employing Assumption \ref{ass:1} we have that the process
defined by \eqref{def: Def nonstationary process} is well-defined. The process defined by \eqref{def: Def stationary process} is also
well-defined, by using Assumption \ref{ass:2}. In addition, it is strictly stationary and ergodic.
We study the difference  $| Y_{t,T}- Y_{t}(u) |$ for $u \in (0,1)$ such that $| t/T-u | < T^{-1}$. We have that
\begin{align*}
	| Y_{t,T}- Y_{t}(u) | \leq | Y_{t,T}- Y_{t}(t/T) | + |Y_{t}(t/T) - Y_{t}(u) |.
\end{align*}
But
\begin{align*}
	| Y_{t,T}- Y_{t}(t/T) |     & \leq \sum_{j=0}^{\infty} |\psi_{t,T}(j)-\psi(t/T, j)| |\epsilon_{t-j}| 
	\leq \frac{\blue{C_0}}{T} \sum_{j=0}^{\infty}  \frac{1}{l_j} |\epsilon_{t-j}|,
\end{align*}
by using Assumption \ref{ass:2} and noting that right-hand side is well defined and $O_{p}(1)$.
Furthermore,
\begin{align*}
	|Y_{t}(t/T) - Y_{t}(u) |  & \leq \sum_{j=0}^{\infty}  | \psi(t/T, j)- \psi(u,j)| |\epsilon_{t-j}| 
	\leq \blue{C_0} \left|\frac{t}{T}-u\right| \sum_{j=0}^{\infty} \frac{1}{l_j} |\epsilon_{t-j}|,
\end{align*}
by using again Assumption \ref{ass:2}. Collecting the above inequalities we obtain the desired result.
\item  By considering a Taylor expansion of the function $h(x) := x g(x)$ and taking into account  Assumption \ref{ass:3}, we have
that
\begin{align*}
	\|w_{t-1,T}X_{t-1,T} - w_{t-1}(t/T) X_{t-1}(t/T)\|
	&\leq \sup_{x\in\R^p}\|h'(x)\|\|X_{t-1,T}-X_{t-1}(t/T)\|\notag\\
	&= {M}\|X_{t-1,T}-X_{t-1}(t/T)\|.
\end{align*}
Now, the triangle inequality and the fact that for any $x_1,...,x_p$,  $\sum^{p}_{i=1}x_i^2\leq \left(\sum^{p}_{i=1}|x_i|\right)^2$, show that
\begin{align*}
	\| X_{t-1,T}- X_{t-1}(u) \| & \leq \|X_{t-1,T}-X_{t-1}(t/T)\| + \|X_{t-1}(t/T)-X_{t-1}(u)\| \\
	& \leq  \sum^{p}_{i=1}\left|Y_{t-i, T}-Y_{t-i}(t/T)\right|
	+ \sum^{p}_{i=1}\left| Y_{t-i}(t/T)-Y_{t-i}(u)\right|,
\end{align*}
and the result follows by the first part of the proposition.

\end{enumerate}

\begin{proof}[Proof of Proposition \ref{prop:2}]
It is sufficient to follow the steps for proving \citet[Prop. 2.4]{DahlhausandPolonik(2009)}. For instance, \citet[Prop. 2.4]{DahlhausandPolonik(2009)}
show that $|\psi_{j, t, T}| \leq K \rho^{j}$ for some positive constant and $ \rho < 1$. Clearly the choice of such weights satisfies assumption \ref{ass:2}(i), in the case
of tvAR($p$) models.  We can verify the rest of conditions of  Assumption \ref{ass:2} in a similar way.		
\end{proof}

\end{proof}

\subsection{Proof of Theorem \ref{thm:1}}

Before proving Theorem \ref{thm:1} we give two useful lemmas.  Hereafter, define
\begin{align}
\Tdm_T = \{\max(p+1,{\intp[u_0T]}-\lfloor C_KTh\rfloor),...,\min(T,{\intp[u_0T]}+\lfloor C_KTh\rfloor)\}.\notag
\end{align}
\blue{Then}, the quantity $K\left((t-{\intp[u_0T]})/(Th)\right)$ is zero if $t\notin\Tdm_T$.
By the inequality $u_0T-1< \intp[u_0T]\leq u_0T$, we have 
$$
\Tdm_T = \{{\intp[u_0T]} - \lfloor C_K Th\rfloor,..., {\intp[u_0T]} + \lfloor C_K Th\rfloor\}
$$
for sufficient large $T$.
\blue{Throughout this section, $C$ denotes a generic positive constant that is independent of $n$ and may be different in different uses.}

\begin{lemma}
\label{lem:3}
Under the conditions of  Theorem \ref{thm:1}, the quantity
\[
W_T := -\frac{1}{\sqrt{Th}}\sum_{{t\in\Tdm_T}}w_{t-1}(u_0)K\left(\frac{t-{\intp[u_0T]}}{Th}\right)\mathrm{sign}(\epsilon_t)X_{t-1}(u_0)
\]
converges to $N(0_p,\kappa_2 \Omega(u_0))$ in distribution, as $T\to\infty$.
\end{lemma}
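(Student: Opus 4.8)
The plan is to recognize the summands of $W_T$ as a martingale difference array and invoke a martingale central limit theorem, reducing to the scalar case by the Cram\'er--Wold device. First I would fix the filtration $\Filt_t := \sigma(\epsilon_s : s\le t)$. Since $Y_{t-1}(u_0)=\sum_{j\ge0}\psi_j(u_0)\epsilon_{t-1-j}$, the vector $X_{t-1}(u_0)$ and hence the weight $w_{t-1}(u_0)=g(X_{t-1}(u_0))$ are $\Filt_{t-1}$-measurable, while $\mathrm{sign}(\epsilon_t)$ is independent of $\Filt_{t-1}$. Because $f(0)>0$ and $f$ is bounded (Assumption \ref{new_ass:5}(i)) the law of $\epsilon_t$ is absolutely continuous with median zero (Assumption \ref{ass:1}(iii)), so $\E[\mathrm{sign}(\epsilon_t)]=0$ and $\mathrm{sign}(\epsilon_t)^2=1$ a.s. Writing $W_T=\sum_{t\in\Tdm_T}\xi_{t,T}$ with $\xi_{t,T}:=-(Th)^{-1/2}w_{t-1}(u_0)K((t-\intp[u_0T])/(Th))\mathrm{sign}(\epsilon_t)X_{t-1}(u_0)$, this gives $\E[\xi_{t,T}\mid\Filt_{t-1}]=0$, so $\{\xi_{t,T},\Filt_t\}$ is a martingale difference array. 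Fixing $\lambda\in\R^p$, I would then study $\lambda^\top W_T=\sum_t\eta_{t,T}$ with $\eta_{t,T}:=\lambda^\top\xi_{t,T}$ and apply the martingale CLT of \citet{McLeish1974dependent}.

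The two conditions to verify are asymptotic negligibility (Lindeberg) and convergence of the conditional variance. The negligibility step is where Assumption \ref{ass:3} does the essential work: the bound $g(x)(1+\|x\|^3)\le M$ implies $g(x)\|x\|\le M$ for all $x$, whence $w_{t-1}(u_0)\,|\lambda^\top X_{t-1}(u_0)|\le M\|\lambda\|$ deterministically. Combined with boundedness of $K$ (Assumption \ref{new_ass:5}(iii)), this yields $\max_t|\eta_{t,T}|\le \|K\|_\infty M\|\lambda\|/\sqrt{Th}\to0$, so the array is uniformly negligible and the conditional Lindeberg condition holds trivially for large $T$. For the conditional variance, using $\mathrm{sign}(\epsilon_t)^2=1$,
\[
\sum_{t\in\Tdm_T}\E[\eta_{t,T}^2\mid\Filt_{t-1}]=\frac{1}{Th}\sum_{t\in\Tdm_T}K^2\!\left(\frac{t-\intp[u_0T]}{Th}\right)\zeta_t,\qquad \zeta_t:=w_{t-1}^2(u_0)\bigl(\lambda^\top X_{t-1}(u_0)\bigr)^2,
\]
and $\{\zeta_t\}$ is a bounded, strictly stationary, ergodic sequence with $\E\zeta_t=\lambda^\top\Omega(u_0)\lambda$.

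I would show this kernel-weighted average converges in probability to $\kappa_2\,\lambda^\top\Omega(u_0)\lambda$ by a mean-plus-variance argument. Its expectation equals $(\E\zeta_0)\,(Th)^{-1}\sum_{t}K^2((t-\intp[u_0T])/(Th))$, and the factor is a Riemann sum for $\int_{-C_K}^{C_K}K^2(v)\,dv=\kappa_2$, so the mean tends to $\kappa_2\lambda^\top\Omega(u_0)\lambda$; its variance is bounded by $\|K\|_\infty^4(Th)^{-2}\sum_{t,s\in\Tdm_T}|\mathrm{Cov}(\zeta_t,\zeta_s)|$, which is $O((Th)^{-1})\to0$ provided the autocovariances of $\{\zeta_t\}$ are summable. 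Once both conditions are in place, the scalar CLT gives $\lambda^\top W_T\dlim N(0,\kappa_2\lambda^\top\Omega(u_0)\lambda)$ for every $\lambda$, and Cram\'er--Wold delivers $W_T\dlim N(0_p,\kappa_2\Omega(u_0))$.

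The main obstacle I anticipate is the summability of $\sum_k|\mathrm{Cov}(\zeta_0,\zeta_k)|$, that is, establishing weak dependence of the bounded nonlinear functional $\zeta_t$ of the possibly infinite-variance linear process $\{Y_t(u_0)\}$. The payoff of the self-weighting is exactly here: although $\mathrm{Var}(Y_t(u_0))$ may be infinite, the bound $g(x)\|x\|\le M$ forces $\zeta_t$ to be bounded, so its covariances are well defined, and their decay should follow from the summable MA coefficients of Assumption \ref{ass:2} together with the smoothness of $g$, which controls how perturbations in distant innovations propagate through $\zeta_t$. This is the analytic heart of the lemma; the martingale and Cram\'er--Wold steps are then routine.
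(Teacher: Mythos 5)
Your overall architecture coincides with the paper's proof: both frame the summands as a martingale difference array (the zero-median assumption giving $\E[\mathrm{sign}(\epsilon_t)\mid\Filt_{t-1}]=0$), both get negligibility from the deterministic bound $\|w_{t-1}(u_0)X_{t-1}(u_0)\|\le M$ of Assumption \ref{ass:3} together with boundedness of $K$, both invoke the CLT of \citet{McLeish1974dependent} and finish with Cram\'er--Wold. Moreover, since $\mathrm{sign}(\epsilon_t)^2=1$ a.s., your conditional variance $\sum_t\E[\eta_{t,T}^2\mid\Filt_{t-1}]$ is literally the same random quantity as the sum of squares $\sum_t(c^\top g_{t,T})^2$ that the paper controls for McLeish's condition (c). The divergence is in how that quantity is shown to converge, and this is where your proposal has a genuine gap.

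You establish convergence of $\frac{1}{Th}\sum_{t\in\Tdm_T}K^2\bigl((t-\intp[u_0T])/(Th)\bigr)\zeta_t$ by a mean-plus-Chebyshev argument, which requires $\sum_k\left|\mathrm{Cov}(\zeta_0,\zeta_k)\right|<\infty$, and you concede that you have not proved this summability. It does not follow from the hypotheses of Theorem \ref{thm:1}: $\zeta_t$ is a bounded but nonlinear functional of a heavy-tailed linear process, and summable MA coefficients (Assumption \ref{ass:2}) plus smoothness of $g$ do not by themselves yield absolutely summable autocovariances of such functionals --- tellingly, the paper must \emph{assume} exactly this type of covariance-summability condition later (Assumption \ref{LemmaA1}) when it is genuinely needed for the bias analysis, rather than derive it from Assumptions \ref{ass:1}--\ref{new_ass:5}. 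The paper's proof of Lemma \ref{lem:3} sidesteps the issue entirely: $\{w_{t-1}^2(u_0)\,c^\top X_{t-1}(u_0)X_{t-1}^\top(u_0)c\}$ is a strictly stationary \emph{ergodic} sequence (a causal measurable function of the iid innovations) with finite mean, since it is bounded by Assumption \ref{ass:3}, and a kernel-weighted ergodic theorem --- a slight modification of Lemma A.1 of \citet{DahhausandRao(2006)} --- then gives almost sure convergence of the kernel-weighted time average to $\kappa_2\,c^\top\Omega(u_0)c$, with no rate, no second-moment structure, and no covariance decay required. Replacing your Chebyshev step by this ergodic-theorem step is what closes the gap and completes the proof under the stated assumptions.
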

\begin{proof}
{ Define $g_{t,T}:=\zeta_T^{-1}w_{t-1}(u_0)K((t-{\intp[u_0T]})/(Th))\mathrm{sign}(\epsilon_t)X_{t-1}(u_0)$.
Then, we have $W_T = -\sum_{t\in\Tdm_T}g_{t,T}$. 
To show the central limit theorem for $W_T$, we check the condition of 
\citet[Thoerem 2.3]{McLeish1974dependent}.
For any constant vector $c\in\R^p$, \blue{consider} an array 
$\{c^\top g_{t,T}:t\in\Tdm_T\}$ \blue{and a sequence of filtrations
\[
\Filt_{t-1,T}:=\sigma\{\epsilon_s: s\in\Tdm_T, s\leq t-1\}\quad (t\in\Tdm_T, T\in\N).
\]
By noting that the median of $\epsilon_t$ is zero, we have
\[
\E(c^\top g_{t,T}|\Filt_{t-1,T})=
\zeta_T^{-1}w_{t-1}(u_0)K((t-{\intp[u_0T]})/(Th))c^\top X_{t-1}(u_0)\E(\mathrm{sign}(\epsilon_t)) = 0.
\]
This implies that 
}
$\{c^\top g_{t,T}:t\in\Tdm_T\}$
is a sequence of martingale difference with respect to $\{\Filt_{t,T}:t\in\Tdm_T\}$.
We bound $\max_{t\in\Tdm_T}|c^\top g_{t,T}|$ as
\begin{align*}
	\max_{t\in\Tdm_T}|c^\top g_{t,T}| 
	&\leq \|c\|\max_{t\in\Tdm_T}\|g_{t,T}\|\notag\\
	&= \frac{\|c\|}{\zeta_T}\max_{t\in\Tdm_T}\|w_{t-1}(u_0)X_{t-1}(u_0)\|\notag\\
	&\leq \frac{\|c\|M}{\zeta_T}\quad (\text{by Assumption \ref{ass:3}})\notag\\
	&\to 0\quad (T\to\infty).
\end{align*}
This implies that the conditions (a) and (b) of 
\citet[Theorem 2.3]{McLeish1974dependent} are fulfilled.
On the other hand,  we have
\begin{align}
	\sum_{t\in\Tdm_T}\left\{c^\top g_{t,T}\right\}^2
	&=  c^\top \left\{\sum_{t\in\Tdm_T}g_{t,T}g_{t,T}^\top\right\}c\notag\\
	&= 
	\frac{1}{Th}\sum_{t\in\Tdm_T}K\left(\frac{t-{\intp[u_0T]}}{Th}\right)^2w^2_{t-1}(u_0) c^\top X_{t-1}(u_0)X_{t-1} ^\top(u_0) c.\label{eq:condition3}
\end{align}
The process $\{w_{t-1}^2(u_0) c^\top X_{t-1}(u_0)X_{t-1} ^\top(u_0) c: t\in\Z\}$ is an ergodic sequence with 
finite $L_1$-norm. 
Then, by slightly modification of \citet[Lemma A.1]{DahhausandRao(2006)}, the right hand side of \eqref{eq:condition3} converges to 
\begin{align*}
	&\frac{1}{Th}\sum_{t\in\Tdm_T}K\left(\frac{t-{\intp[u_0T]}}{Th}\right)^2w_{t-1}^2(u_0)c^\top X_{t-1}(u_0)X_{t-1}^\top (u_0) c\notag\\
	&\xrightarrow{\mathrm{a.s.}} \int^{C_K}_{-C_K}K(v)^2dv
	\left[c^\top\E( X_{t-1}(u_0)X_{t-1}^\top(u_0) )c\right]\notag\\
	&=\kappa_2c^\top \Omega(u_0) c.
\end{align*}
\blue{This implies 
	$\sum_{t\in\Tdm_T}\left\{c^\top g_{t,T}\right\}^2/(\kappa_2c^\top \Omega(u_0) c) \xrightarrow{\mathrm{a.s.}}1$, 
	so condition (c) of \citet[Theorem 2.3]{McLeish1974dependent} holds true.} 
Hence
$c^\top W_T = \sum_{t\in\Tdm_T}c^\top g_{t,T}\dlim N(0, \kappa_2c^\top \Omega(u_0) c)$.
Since $c$ is chosen arbitrary, we obtain the desired result by the Cram\'er-Wold device.}
\end{proof}

\begin{lemma}
\label{lem:4}
Under the conditions in Theorem \ref{thm:1}, the quantity
\[
S_T := \sum_{{t\in \Tdm_T}}w_{t-1}(u_0)K\left(\frac{t-{\intp[u_0T]}}{Th}\right)\tilde \delta_{t,T}(u_0,v)
\]	
converges to $(f(0)/2) v^\top \Sigma(u_0)v$ in probability as $T\to\infty$,
where
\[
\tilde\delta_{t,T}(u_0,v) := \int^{\zeta_T^{-1}v^\top X_{t-1}(u_0)}_{0}\{\mathbb{I}(\epsilon_t\leq s)
- \mathbb{I}(\epsilon_t\leq 0)\}ds,
\]
and $v \in \mathbb{R}^{p}$.
\end{lemma}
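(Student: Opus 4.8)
The plan is to condition on the past, decompose $S_T$ into a predictable mean term plus a martingale--difference fluctuation, and handle each separately. Write $\xi_t:=w_{t-1}(u_0)K((t-\intp[u_0T])/(Th))\tilde\delta_{t,T}(u_0,v)$ so that $S_T=\sum_{t\in\Tdm_T}\xi_t$, set $\Filt_{t-1}:=\sigma(\epsilon_s:s\le t-1)$, and abbreviate $a_{t,T}:=\zeta_T^{-1}v^\top X_{t-1}(u_0)$. The observation driving everything is that $X_{t-1}(u_0)$ and $w_{t-1}(u_0)=g(X_{t-1}(u_0))$ depend only on $\{\epsilon_s:s\le t-1\}$, hence are $\Filt_{t-1}$--measurable and independent of $\epsilon_t$. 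Putting $M_t:=\E[\xi_t\mid\Filt_{t-1}]$ and $D_t:=\xi_t-M_t$, I would show $\sum_t M_t\plim\frac{f(0)}{2}v^\top\Sigma(u_0)v$ and $\sum_t D_t\to 0$ in $L^2$.

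For the predictable part I would first compute the conditional mean. Letting $F$ be the cdf of $\epsilon_1$, Fubini and independence give
\[
\E[\tilde\delta_{t,T}(u_0,v)\mid\Filt_{t-1}]=\int_0^{a_{t,T}}\{F(s)-F(0)\}\,ds.
\]
Since the median of $\epsilon_1$ is zero (Assumption \ref{ass:1}(iii)) we have $F(0)=1/2$, and since $\sup_s|f'(s)|<\infty$ (Assumption \ref{new_ass:5}(i)) a first--order expansion gives $F(s)-F(0)=f(0)s+O(s^2)$ uniformly, so that
\[
\E[\tilde\delta_{t,T}(u_0,v)\mid\Filt_{t-1}]=\tfrac{f(0)}{2}a_{t,T}^2+R_{t,T},\qquad |R_{t,T}|\le C|a_{t,T}|^3.
\]
The leading contribution to $\sum_t M_t$ is then $\frac{f(0)}{2}(Th)^{-1}\sum_{t\in\Tdm_T}K(\cdot)\,w_{t-1}(u_0)(v^\top X_{t-1}(u_0))^2$. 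Because $\{w_{t-1}(u_0)(v^\top X_{t-1}(u_0))^2\}$ is stationary, ergodic and integrable --- Assumption \ref{ass:3} yields $w_{t-1}(u_0)\|X_{t-1}(u_0)\|^2\le M$ --- the kernel--weighted ergodic theorem (the modification of \citet[Lemma A.1]{DahhausandRao(2006)} already invoked in Lemma \ref{lem:3}) forces this to $\int_{-C_K}^{C_K}K(x)\,dx\cdot v^\top\Sigma(u_0)v=v^\top\Sigma(u_0)v$. The remainder is negligible: $|R_{t,T}|\le C(Th)^{-3/2}|v^\top X_{t-1}(u_0)|^3$ with $w_{t-1}(u_0)|v^\top X_{t-1}(u_0)|^3\le\|v\|^3 M$, so summed over the $O(Th)$ indices of $\Tdm_T$ it is $O_p((Th)^{-1/2})$.

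For the fluctuation, the $\{D_t:t\in\Tdm_T\}$ are uncorrelated martingale differences relative to $\{\Filt_{t-1}\}$, so $\V(\sum_t D_t)\le\sum_t\E[D_t^2]\le\sum_t\E[\xi_t^2]$. From the definition one checks the pointwise bound $0\le\tilde\delta_{t,T}(u_0,v)\le|a_{t,T}|\,\mathbb{I}(|\epsilon_t|\le|a_{t,T}|)$, so that
\[
\E[\tilde\delta_{t,T}^2(u_0,v)\mid\Filt_{t-1}]\le a_{t,T}^2\,P(|\epsilon_1|\le|a_{t,T}|)\le 2\sup_s f(s)\,|a_{t,T}|^3.
\]
Combining with $w_{t-1}^2(u_0)|v^\top X_{t-1}(u_0)|^3\le\|v\|^3 M^2$ (again Assumption \ref{ass:3}, using $g\le M$) gives $\E[\xi_t^2]\le C(Th)^{-3/2}K(\cdot)^2$, and $\sum_{t\in\Tdm_T}K(\cdot)^2=O(Th)$ then yields $\sum_t\E[D_t^2]=O((Th)^{-1/2})\to 0$. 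Hence $\sum_t D_t\to 0$ in $L^2$, and together with the predictable part this gives $S_T\plim\frac{f(0)}{2}v^\top\Sigma(u_0)v$.

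I expect the variance control of the last paragraph to be the main obstacle. Under heavy tails no moment of $X_{t-1}(u_0)$ is available a priori, so the decisive point is that the self--weighting through Assumption \ref{ass:3} supplies the finite third moments $\E[w_{t-1}|v^\top X_{t-1}|^3]<\infty$ and $\E[w_{t-1}^2|v^\top X_{t-1}|^3]<\infty$; together with the sharp cubic bound $\E[\tilde\delta_{t,T}^2\mid\Filt_{t-1}]\le C|a_{t,T}|^3$ these produce the extra factor $(Th)^{-1/2}$ that defeats the $O(Th)$ number of summands. Without this third--moment weighting neither the remainder $R_{t,T}$ nor the martingale variance would vanish, which is precisely why the weight function, rather than an unweighted LAD objective, is essential here.
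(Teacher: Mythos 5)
Your proof is correct and takes essentially the same route as the paper's: the identical decomposition of $S_T$ into the conditional-mean part $\sum_t \E[\xi_t\mid\Filt_{t-1}]$ and a martingale-difference remainder, the same Taylor expansion $F(s)-F(0)=f(0)s+O(s^2)$ with the kernel-weighted ergodic theorem of \citet[Lemma A.1]{DahhausandRao(2006)} giving the limit $\tfrac{f(0)}{2}v^\top\Sigma(u_0)v$, the same use of the uniform bounds $g(x)\|x\|^3\le M$ and $g(x)^2\|x\|^3\le M^2$ from Assumption \ref{ass:3}, and the same cubic per-term bound yielding a total variance of order $O((Th)^{-1/2})$. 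The only cosmetic difference is that you obtain the bound $\E[\tilde\delta_{t,T}^2\mid\Filt_{t-1}]\le C|\zeta_T^{-1}v^\top X_{t-1}(u_0)|^3$ from the pointwise inequality $\tilde\delta_{t,T}\le |\zeta_T^{-1}v^\top X_{t-1}(u_0)|\,\mathbb{I}(|\epsilon_t|\le|\zeta_T^{-1}v^\top X_{t-1}(u_0)|)$, whereas the paper reaches the same estimate by factoring the squared integral and expanding $F$ inside the expectation.
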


\begin{proof}
{ Let us consider the decomposition $S_T=S^{(1)}_T + S^{(2)}_T$, where
\begin{align}
	&S^{(1)}_T = \sum_{t\in\Tdm_T}w_{t-1}(u_0)K\left(\frac{t-{\intp[u_0T]}}{Th}\right) \E[\tilde\delta_{t,T}(u_0,v)\mid \Filt_{t-1,T}],\notag\\
	&S^{(2)}_T = \sum_{t\in\Tdm_T}w_{t-1}(u_0)K\left(\frac{t-{\intp[u_0T]}}{Th}\right)\left(\tilde\delta_{t,T}(u_0,v) - \E[\tilde\delta_{t,T}(u_0,v)\mid \Filt_{t-1,T}]\right).\notag
\end{align}
We evaluate $S^{(1)}_T$ and $S^{(2)}_T$ in succession.
\medskip
First, $S^{(1)}_T$ is calculated as 
\begin{align}
	S^{(1)}_T
	& = \sum_{t\in\Tdm_T}w_{t-1}(u_0)K\left(\frac{t-{\intp[u_0T]}}{Th}\right) 
	\int^{\zeta_T^{-1}v^\top X_{t-1}(u_0)}_{0}\{F(s) - F(0)\}ds
	\notag\\
	& = \sum_{t\in\Tdm_T}w_{t-1}(u_0)K\left(\frac{t-{\intp[u_0T]}}{Th}\right) 
	\int^{\zeta_T^{-1}v^\top X_{t-1}(u_0)}_{0}\left\{s f(0) + \frac{1}{2}s^2 f'(s^*)\right\}ds
	\notag\\
	& = \frac{f(0)}{2Th}\sum_{t\in\Tdm_T}K\left(\frac{t-{\intp[u_0T]}}{Th}\right) v^\top w_{t-1}(u_0)X_{t-1}(u_0) X_{t-1}^\top(u_0) v\label{eq:p22}\\
	& +\frac{1}{2}\sum_{t\in\Tdm_T}w_{t-1}(u_0)K\left(\frac{t-{\intp[u_0T]}}{Th}\right)\int^{\zeta_T^{-1}v^\top X_{t-1}(u_0)}_{0}s^2 f'(s^*)ds,\label{eq:p23}
\end{align}
where $s^*$ lies on the segment joining $0$ and $s$}.
Since each element of $\{w_{t-1}(u_0)X_{t-1}(u_0)X_{t-1}^\top(u_0): t\in\Z\}$ is a strictly stationary ergodic process with finite expectation, the term \eqref{eq:p22} converges almost surely to 
\[
\frac{f(0)}{2}v^\top \E[w_{t-1}(u_0)X_{t-1}(u_0) X_{t-1}^\top(u_0)]v = \frac{f(0)}{2} v^\top \Sigma(u_0)v,
\]
using \citet[Lemma A.1]{DahhausandRao(2006)}. 
On  the other hand, the term \eqref{eq:p23} is bounded as
\begin{align}
&\left|\frac{1}{2}\sum_{t\in\Tdm_T}w_{t-1}(u_0)K\left(\frac{t-{\intp[u_0T]}}{Th}\right)\int^{\zeta_T^{-1}v^\top X_{t-1}(u_0)}_{0}s^2 f'(s^*)ds\right|\notag\\
&\leq \frac{1}{2}\sum_{t\in\Tdm_T}w_{t-1}(u_0)K\left(\frac{t-{\intp[u_0T]}}{Th}\right)
f'_{\mathrm{sup}}\frac{1}{3}\left|\zeta_T^{-1}v^\top X_{t-1}(u_0)\right|^3 \notag\\
&\leq \frac{f'_{\mathrm{sup}}}{\zeta_T^3}\|v\|\sum_{t\in\Tdm_T}K\left(\frac{t-{\intp[u_0T]}}{Th}\right)
w_{t-1}(u_0)\left|X_{t-1}(u_0)\right|^3\notag\\
&\leq \frac{f'_{\sup}}{6\zeta_T}\|v\| \frac{1}{Th}\sum_{t\in\Tdm_T}K\left(\frac{t-{\intp[u_0T]}}{Th}\right)
w_{t-1}(u_0)\left\|X_{t-1}(u_0)\right\|^3\notag\\
&\leq \frac{f'_{\sup}}{6\zeta_T}\|v\| \frac{1}{Th}\sum_{t\in\Tdm_T}K\left(\frac{t-{\intp[u_0T]}}{Th}\right)
M\to 0,\notag
\end{align}
where $f'_{\mathrm{sup}}:= \sup_{x\in\R}|f'(x)|$ and the second inequality follows by Assumption \ref{new_ass:5}(i). 
Therefore, we have
\begin{align}
S^{(1)}_T \plim \frac{f(0)}{2} v^\top \Sigma(u_0)v\quad\text{as $T\to\infty$}.\label{eq:p24}
\end{align}
\medskip
In addition,  evaluation of the second moment of $S^{(2)}_T$ shows that 
\begin{align}
\E\left[\left|S^{(2)}_T\right|^2\right]&=\E\left[\left|
\sum_{t\in\Tdm_T}w_{t-1}(u_0)K\left(\frac{t-{\intp[u_0T]}}{Th}\right)\left(\tilde\delta_{t,T}(u_0, v) - \E[\tilde\delta_{t,T}(u_0, v)\mid \Filt_{t-1,T}]\right)
\right|^2\right]\notag\\
&\leq
\sum_{t\in\Tdm_T}K\left(\frac{t-{\intp[u_0T]}}{Th}\right)^2\E\left[w^2_{t-1}(u_0)\left(\tilde\delta_{t,T}(u_0, v) - \E[\tilde\delta_{t,T}(u_0, v)\mid \Filt_{t-1,T}]\right)^2
\right]\notag\\
&\leq 2\sum_{t\in\Tdm_T}K\left(\frac{t-{\intp[u_0T]}}{Th}\right)^2\E\left[w^2_{t-1}(u_0)\tilde\delta^2_{t,T}(u_0, v)\right].\label{eq:p25}
\end{align}
due to the fact that 
$\{w_{t-1}(u_0)(\tilde\delta_{t,T}(u_0, v) - \E[\tilde\delta_{t,T}(u_0, v)\mid \Filt_{t-1,T}])\}$ is an  uncorrelated process. 
Now recalling the inequality $\E\left[\left|X - \E(X|\mathcal{G})\right|^2\right]\leq 2\E(X^2)$ for any random variable $X$ and sub-field $\mathcal{G}$, the expectation $\E[w_{t-1}(u_0)^2\tilde\delta_{t,T}(u_0, v)^2]$ is evaluated as
\begin{align}
&\E\left[w_{t-1}^2(u_0) \tilde\delta^2_{t,T}(u_0, v)\right]\notag\\
&=\E\left[w_{t-1}^2(u_0) \left(\int^{|\zeta_T^{-1}v^\top X_{t-1}(u_0)|}_{0}\{\mathbb{I}(\epsilon_t\leq s)
- \mathbb{I}(\epsilon_t\leq 0)\}ds\right)^2\right]\notag\\
&\leq\E\left[w_{t-1}^2(u_0) \left(\int^{|\zeta_T^{-1}v^\top X_{t-1}(u_0)|}_{0}1ds\right)
\left(\int^{|\zeta_T^{-1}v^\top X_{t-1}(u_0)|}_{0}\{\mathbb{I}(\epsilon_t\leq s)
- \mathbb{I}(\epsilon_t\leq 0)\}ds\right)\right]\notag\\
&=\frac{\|v\|}{\zeta_T}\E\left[w^2_{t-1}(u_0) \|X_{t-1}(u_0)\|
\left(\int^{|\zeta_T^{-1}v^\top X_{t-1}(u_0)|}_{0}\{\mathbb{I}(\epsilon_t\leq s)
- \mathbb{I}(\epsilon_t\leq 0)\}ds\right)\right]\notag\\
&=\frac{\|v\|}{\zeta_T}\E\left[w_{t-1}^2(u_0) \|X_{t-1}(u_0)\|
\left(\int^{|\zeta_T^{-1}v^\top X_{t-1}(u_0)|}_{0}\{F(s)
- F(0)\}ds\right)\right]\notag\\
&=\frac{\|v\|}{\zeta_T}\E\left[w_{t-1}^2(u_0) \|X_{t-1}(u_0)\|
\left(\int^{|\zeta_T^{-1}v^\top X_{t-1}(u_0)|}_{0}sf(s^*)ds\right)\right]\notag\\
&\leq\frac{\|v\|^3 f_{\sup}}{2\zeta_T}\E\left[w_{t-1}^2(u_0) \|X_{t-1}(u_0)\|
|\zeta_T^{-1}v^\top X_{t-1}(u_0)|^2\right]
\notag\\
&\leq \frac{\|v\|^3f_{\sup}}{\zeta_T^3} \E\left[w_{t-1}^2(u_0)\|X_{t-1}(u_0)\|^3\right]\notag\\
&\leq \frac{\|v\|^3f_{\sup} M^2}{(Th)^{3/2}}, \notag
\end{align}
using  Assumption \ref{ass:3}, where $f_{\mathrm{sup}}:= \sup_{x\in\R}f(x)$.
Therefore, \eqref{eq:p25} becomes
\begin{align}
\E\left[\left|S^{(2)}_T\right|^2\right]
&\leq {\frac{2\|v\|^3f_{\sup}M^2}{(Th)^{3/2}}Th\to 0 \quad(T\to\infty).}
\label{eq:p26}
\end{align}
Thus,  \eqref{eq:p24} and \eqref{eq:p26} leads the desired result.
\end{proof}

\subsubsection*{Proof of Theorem \ref{thm:1}}

\begin{proof}
Fix any $v\in\R^p$.
Let us consider the function
\begin{align*}
{L_{T}}(u_0; v) := \sum_{t\in\Tdm_T}
w_{t-1,T} K\left(\frac{t-{\intp[u_0T]}}{Th}\right)
\left\{
\left|\epsilon_t - \zeta_T^{-1} {v}^\top X_{t-1,T}\right|
- \left|\epsilon_t\right|
\right\},
\end{align*}
for each $v\in\R^p$.
For sufficient large $T$, the minimizer of $L_{T}(u_0; \cdot)$ coincides to $\zeta_T(\hat\beta_T(u_0) - \beta(u_0))$.
Hereafter, we derive the limit distribution of $L_{T}(u_0; v)$ for fixed {$v$}.

\medskip

By using the \cite{Knight1998}'s identity, which states that for any real numbers $x\neq0$ and $y$,
\[
|x-y|-|x|=-y\mathrm{sign}(x)+2 \int_{0}^{y}\{\mathbb{I}(x \leq s)-\mathbb{I}(x \leq 0)\} ds,
\]
we obtain
\begin{align}
L_{T}(u_0; v)
&= {v^\top Q_{T} +2\sum_{t\in\Tdm_T}w_{t-1,T}K\left(\frac{t-{\intp[u_0T]}}{Th}\right)\delta_{t,T}(v)},\label{eq:p12}
\end{align}
where
\begin{align}
&Q_{T} = -\frac{1}{\zeta_T}\sum_{t\in\Tdm_T}w_{t-1,T}K\left(\frac{t-{\intp[u_0T]}}{Th}\right)\mathrm{sign}(\epsilon_t)X_{t-1,T}\text{ and }\notag\\
& \delta_{t,T}(v) = \int^{\zeta_T^{-1}v^\top X_{t-1,T}}_{0}\{\mathbb{I}(\epsilon_t\leq s)
- \mathbb{I}(\epsilon_t\leq 0)\}ds.\label{def:def_delta}
\end{align}
In what follows, we approximate ${L_{T}(u_0; v)}$ by
\begin{align}
{\tilde L_{T}(u_0; v)
	:= v^\top W_T  +2\sum_{t\in\Tdm_T}w_{t-1}(u_0)K\left(\frac{t-{\intp[u_0T]}}{Th}\right)\tilde \delta_{t,T}(u_0,v)},
	\end{align}
	where $W_T$ and $\tilde\delta_{t,T}(u_0,v)$ have been  defined in Lemmas \ref{lem:3} and 
	\ref{lem:4}, respectively.

	\medskip
	
	For {$Q_{T}$} and $W_T$, we have
	\begin{align}
\left\|Q_{T}-W_T\right\|
&\leq \frac{1}{\zeta_T}\sum_{t\in\Tdm_T}
K\left(\frac{t-{\intp[u_0T]}}{Th}\right)\left\|w_{t-1,T}X_{t-1,T} - w_{t-1}(u_0)X_{t-1}(u_0)\right\|\notag\\
&\leq \frac{1}{\zeta_T}\sum_{t\in\Tdm_T}
K\left(\frac{t-{\intp[u_0T]}}{Th}\right)
\left(
\frac{1}{T} + \left|\frac{t}{T}-u_0\right|
\right)\blue{C_0}M\sum^{p}_{i=1}A_{t-i}.\label{eq:p13}
\end{align}
Notice that  for $t\in\Tdm_T$, the inequality 
\begin{align}
\left|\frac{t}{T}-u_0\right|\leq\left|\frac{t-{\intp[u_0T]}}{T}\right| 
+ \left|\frac{{\intp[u_0T]}}{T}-u_0\right|
\leq C_K h+\frac{1}{T}\notag
\end{align}
holds, so \eqref{eq:p13} is further bounded by 
\begin{align}
\left\|Q_{T}-W_T\right\|
&\leq \frac{M \blue{C_0}}{\zeta_T}
\left(C_Kh + \frac{2}{T}\right)
\sum_{t\in\Tdm_T}
K\left(\frac{t-{\intp[u_0T]}}{Th}\right)
\sum^{p}_{i=1}A_{t-i}.\notag
\end{align}
Since $Th\to\infty$, $h$ converges to zero slower than $1/T$. Therefore, {the leading term of  
$\|Q_{T}-W_T\|$
is 
\begin{align}
	\frac{M \blue{C_0}C_Kh}{\zeta_T}
	\sum_{t\in\Tdm_T}
	K\left(\frac{t-\intp[u_0 T]}{Th}\right)
	\sum^{p}_{i=1}A_{t-i}.\label{eq:p14}
	\end{align}}
	Recall that for sufficient large $T$, we have $\Tdm_T=\{{\intp[u_0T]-\intp[C_KTh],...,\intp[u_0T]+\intp[C_KTh]}\}$
	and  set  \blue{$n=\intp[Th]$} in Lemma \ref{lem:At}. Then, \eqref{eq:p14} is bounded by
	\begin{align}
M \blue{C_0} C_K \max_{x\in[-C_K,C_K]}K(x)\sum^{p}_{i=1}
\frac{h^{1/2}}{T^{1/2}}
\sum_{{t\in\Tdm_T}}
A_{t-i}
&=_d C \sum^{p}_{i=1}
\frac{h^{1/2}}{T^{1/2}}
\sum_{t=1}^{2C_K n+1}
A_{t-i}\notag\\
&=O_p\left(\frac{h^{1/2}}{T^{1/2}}\blue{a_{\intp[Th]}} + T^{1/2}h^{3/2}\blue{b_{\intp[Th]}}\right),\label{eq:new_evl1}
\end{align}
where $C$ is a constant.
Under Assumption \ref{ass:n4} and by Lemma \ref{lem:At},
the right hand side of \eqref{eq:new_evl1} converges to zero.

\medskip

\medskip

Next, we derive the limit of the {second} term in \eqref{eq:p12}.
Consider the following decomposition:
\begin{align}
&\sum_{t\in\Tdm_T}K\left(\frac{t-{\intp[u_0T]}}{Th}\right)w_{t-1,T}\delta_{t,T}(v)\notag\\
&= \sum_{t\in\Tdm_T}K\left(\frac{t-{\intp[u_0T]}}{Th}\right)\E\left[w_{t-1,T}\delta_{t,T}(v)|\Filt_{t-1,T}\right]\label{delta_decomp_1}\\
&+  \sum_{t\in\Tdm_T}K\left(\frac{t-{\intp[u_0T]}}{Th}\right)
\left\{w_{t-1,T}\delta_{t,T}({v}) - \E\left[w_{t-1,T}\delta_{t,T}(v)|\Filt_{t-1,T}\right]\right\}.\label{delta_decomp_2}
\end{align}

{
The quantity \eqref{delta_decomp_1} is further decomposed as
\begin{align}
	&\sum_{t\in\Tdm_T}K\left(\frac{t-{\intp[u_0T]}}{Th}\right)\E\left[w_{t-1,T}\delta_{t,T}({v})|\Filt_{t-1,T}\right]\notag\\
	&=\sum_{t\in\Tdm_T}K\left(\frac{t-{\intp[u_0T]}}{Th}\right)\E\left[w_{t-1}(u_0)\tilde \delta_{t,T}(u_0, v)|\Filt_{t-1,T}\right]\label{delta_decomp_1_A}\\
	& + \sum_{t\in\Tdm_T}K\left(\frac{t-{\intp[u_0T]}}{Th}\right)
	\left\{\E\left[w_{t-1,T}\delta_{t,T}(v)|\Filt_{t-1,T}\right] - \E\left[w_{t-1}(u_0)\tilde \delta_{t,T}(u_0, v)|\Filt_{t-1,T}\right]\right\}.\label{delta_decomp_1_B}
\end{align}
The quantity given by  \eqref{delta_decomp_1_A} is the leading term. 
The other summand, given by \eqref{delta_decomp_1_B}, represents the difference between the nonstationary and stationary processes.
By the proof of Lemma \ref{lem:4}, the term \eqref{delta_decomp_1_A} converges to $(f(0)/2) v^\top \Sigma(u_0)v$ almost surely.
By the same argument as in the proof of Lemma \ref{lem:3}, the term \eqref{delta_decomp_1_B} is bounded by
$\Gamma_T+\Lambda_T + \widetilde\Lambda_T$, where
\begin{align}
	&\Gamma_T := \frac{f(0)}{2Th}\sum_{t\in\Tdm_T}K\left(\frac{t-{\intp[u_0T]}}{Th}\right)
	\left|v^\top\left\{w_{t-1,T}X_{t-1,T}X^\top_{t-1,T} - w_{t-1}(u_0)X_{t-1}(u_0)X^\top_{t-1}(u_0)\right\} v\right|,\notag\\
	&\Lambda_T := \frac{1}{2}\sum_{t\in\Tdm_T}w_{t-1,T}K\left(\frac{t-{\intp[u_0T]}}{Th}\right)\int^{\zeta_T^{-1}\left|v^\top X_{t-1,T}\right|}_{0}s^2 |f'(s^{**})|ds,\notag\\
	&\widetilde\Lambda_T := \frac{1}{2}\sum_{t\in\Tdm_T}w_{t-1}(u_0)K\left(\frac{t-{\intp[u_0T]}}{Th}\right)\int^{\zeta_T^{-1}\left|v^\top X_{t-1}(u_0)\right|}_{0}s^2 |f'(s^*)|ds,\notag
\end{align}
and $s^{*}, s^{**}$ are intermediate  values. It is easily shown that $\Lambda_T$ and $\widetilde\Lambda_T$ converges to zero in probability.
Regarding $\Gamma_T$, a Taylor expansion of the function  $g(x)(v^\top x)^2$ leads
\begin{align}
	&\left|v^\top\left\{w_{t-1,T}X_{t-1,T}X_{t-1,T} - w_{t-1}(u_0)X_{t-1}(u_0)X_{t-1}(u_0)\right\} v\right|
	\leq O(h)\sum^{p}_{i=1}A_{t-i} \quad( t\in\Tdm_T).\notag
\end{align}
Therefore, \blue{with some constant $C$, we have} 
\begin{align}
	\Gamma_T &\leq \frac{f(0)CO(h)}{2Th}\sum_{t\in\Tdm_T}K\left(\frac{t-{\intp[u_0T]}}{Th}\right)
	\sum^{p}_{i=1}A_{t-i}
	= \sum^{p}_{i=1}O_p\left(\frac{1}{T}\sum_{t=p+1}^{2C_K Th+1}
	A_{t-i}\right) = O_p\left(\frac{\blue{a_{\intp[Th]}}}{T}+h\blue{b_{\intp[Th]}}\right).\label{gam_eval}
\end{align}
If $\blue{a_{\intp[Th]}}/T$ diverges to infinity, then $h\blue{a_{\intp[Th]}}$ also diverges, and this contradicts to Assumption \ref{ass:n4}. So we have $\blue{a_{\intp[Th]}}/T\to 0$. On the other hand, $Th\to\infty$ and 
$Th^3\blue{b_{\intp[Th]}}^2=Th(h\blue{b_{\intp[Th]}})^2\to 0$ imply that $h\blue{b_{\intp[Th]}}\to 0$.
Hence, the quantity \eqref{gam_eval} converges to zero.
Therefore, the term \eqref{delta_decomp_1_B} converges to zero.
Thus, the quantity \eqref{delta_decomp_1} converges to 
$(f(0)/2) v^\top \Sigma(u_0)v$ almost surely.
}

{
For \eqref{delta_decomp_2},  similar argument as in Lemma \ref{lem:4} shows that 
\begin{align}
	&\E\left(\left|\sum_{t\in\Tdm_T}K\left(\frac{t-{\intp[u_0T]}}{Th}\right)
	\left\{w_{t-1,T}\delta_{t,T}({v}) - \E\left[w_{t-1,T}\delta_{t,T}(v)|\Filt_{t-1,T}\right]\right\}\right|^2\right)\notag\\
	&\leq 2\sum_{t\in\Tdm_T}K\left(\frac{t-{\intp[u_0T]}}{Th}\right)
	\E\left(w_{t-1,T}^2\delta_{t,T}({v})^2\right)\notag\\
	&\leq \frac{2C \delta^2}{(Th)^{3/2}}\sum_{t\in\Tdm_T}K\left(\frac{t-{\intp[u_0T]}}{Th}\right)
	\|{v}\|^3\to0,\notag
\end{align}
where $C$ is a constant.
This implies that the term \eqref{delta_decomp_2} converges to zero in probability.
Thus, we conclude that the last term in \eqref{eq:p12} converges to $(f(0)/2) v^\top \Sigma(u_0)v$ in probability.
}

\medskip

{By collecting all the previous results,  we obtain the approximation of \eqref{eq:p12} as follows:
\begin{align*}
	L_{T}(u_0, v,\gamma)
	= v^\top W_T
	+f(0)v^\top \Sigma(u_0)v
	+o_p(1)\label{eq:p21}
\end{align*}
at each ${v}\in\R^p$. 
So, recalling Lemma \ref{lem:3}, ${L_{T}(u_0; v)}$ converges to
\[
v^\top W + f(0)v^\top \Sigma(u_0)v
\]
in distribution as $T\to\infty$, 
where $W\sim N(0_p, \kappa_2 \Omega(u_0))$.
The minimum of the previous display  is attained at 
\[
v= -\frac{1}{4f(0)}\Sigma(u_0)^{-1}W\sim N\left(0_p, \frac{\kappa_2}{4f(0)^2}\Sigma(u_0)^{-1}\Omega(u_0)\Sigma(u_0)^{-1}\right)
\]
almost surely, and hence
\[
\sqrt{Th}[{\hat\beta_{T}(u_0)-\beta(u_0)}]\dlim  N\left(0_p,\frac{\kappa_2}{4f(0)^2}
\Sigma^{-1}(u_0)\Omega(u_0)\Sigma^{-1}(u_0)\right)\quad(T\to\infty).
\]}
\end{proof}

\subsubsection*{Proof of Theorem \ref{thm:BS}}
\begin{proof}
By the proof of Theorem \ref{thm:1}, we obtain the expansion of $\hat\beta_T(u_0)$ as 
\begin{align}
\zeta_T(\hat\beta_T(u_0) - \beta(u_0)) = -\left[2f(0)\Sigma(u_0)\right]^{-1}W_T+o_p(1).\label{eq:exp_est}
\end{align}
On the other hand, consider the function 
\begin{align}
L_T^*(u_0; v) := \sum_{t\in \Tdm_T}z_tK\left(\frac{t-\lfloor u_0 T\rfloor}{Th}\right)w_{t-1, T}\left(
\left|\epsilon_t - \zeta_T^{-1} v^\top X_{t-1,T}\right|
-|\epsilon_t|
\right).\notag
\end{align}
Then, it is shown that $\hat v^* := \zeta_T(\hat\beta_T^*(u_0) - \beta(u_0))$ is a minimizer of 
$L_T^*(u_0; v)$, by the similar reason for 
$L_{T}(u_0; v)$ and $\zeta_T(\hat\beta_T(u_0) - \beta(u_0))$.
We expand $L_T^*(u_0; v)$ as 
\begin{align}
L_T^*(u_0; v)
= {v^\top Q_{T}^* +2\sum_{t\in\Tdm_T}z_tw_{t-1,T}K\left(\frac{t-{\intp[u_0T]}}{Th}\right)\delta_{t,T}(v)}, \label{eq:26}
\end{align}
where 
\begin{align}
Q_{T}^* = -\frac{1}{\zeta_T}\sum_{t\in\Tdm_T}z_tw_{t-1,T}K\left(\frac{t-{\intp[u_0T]}}{Th}\right)\mathrm{sign}(\epsilon_t)X_{t-1,T}\notag
\end{align}
and $\delta_{t,T}$ is defined in \eqref{def:def_delta}.
Let 
\[
W_T^* := -\frac{1}{\sqrt{Th}}\sum_{{t\in\Tdm_T}}z_t w_{t-1}(u_0)K\left(\frac{t-{\intp[u_0T]}}{Th}\right)\mathrm{sign}(\epsilon_t)X_{t-1}(u_0).
\]
By the similar way as \eqref{eq:p13}, the difference between $W_T^*$ and $Q_T^*$ is evaluated as 
\begin{align}
\left\|Q_{T}^*-W_T^*\right\|
\leq \frac{1}{\zeta_T}\sum_{t\in\Tdm_T}
K\left(\frac{t-{\intp[u_0T]}}{Th}\right)
\left(
\frac{1}{T} + \left|\frac{t}{T}-u_0\right|
\right)\blue{C_0}M z_t\sum^{p}_{i=1}A_{t-i}, \label{eq:a27}
\end{align}
and the leading term of \eqref{eq:a27} is 
\begin{align}
\frac{M \blue{C_0}C_Kh}{\zeta_T}
\sum_{t\in\Tdm_T}
K\left(\frac{t-\intp[u_0 T]}{Th}\right)
\sum^{p}_{i=1}z_t A_{t-i}.\label{eq:a28}
\end{align}
Note that 
\begin{align}
\frac{h^{1/2}}{T^{1/2}}\sum_{t=1}^{2C_K Th+1}
z_t A_{t-i}
\leq 
\frac{h^{1/2}}{T^{1/2}}
\left\{
\sum_{t=1}^{2C_K Th+1}z_t^2
\right\}^{1/2}
\left\{
\sum_{t=1}^{2C_K Th+1}A_{t-i}^2
\right\}^{1/2}\notag\\
=
\left\{
\frac{1}{Th}\sum_{t=1}^{2C_K Th+1}z_t^2
\right\}^{1/2}
\left\{
h^2
\sum_{t=1}^{2C_K Th+1}A_{t-i}^2
\right\}^{1/2}.\label{eq:a29}
\end{align}
The first factor in the right hand side of \eqref{eq:a29} is $O_p(1)$ by the low of large numbers,and the second factor is $o_p(1)$ by Lemma \ref{lem:At} and the assumption in Theorem \ref{thm:BS}.
Thus, \eqref{eq:a28} converges to zero in probability by the same reason as \eqref{eq:new_evl1}, 
and we obtain $Q_T^* = W_T^*+o_p(1)$.
On the other hand, by noting the independence of $z_t$ and by the similar argument as in the proof of Theorem \ref{thm:1}, the second term of \eqref{eq:26} is shown to converges to 
$(f(0)/2) v^\top \Sigma(u_0)v$ a.s.
These facts leads the expression 
\begin{align}
\zeta_T(\hat\beta_T^*(u_0) - \beta(u_0)) = -\left[2f(0)\Sigma(u_0)\right]^{-1}W_T^*+o_p(1).\label{eq:exp_est_BS}
\end{align}
Then, \eqref{eq:exp_est} and \eqref{eq:exp_est_BS} yields
\begin{align}
&\zeta_T(\hat\beta_T^*(u_0) - \hat\beta_T(u_0))\notag\\
&=\left[2f(0)\Sigma(u_0)\right]^{-1}
\frac{1}{\sqrt{Th}}\sum_{{t\in\Tdm_T}}(z_t-1) w_{t-1}(u_0)K\left(\frac{t-{\intp[u_0T]}}{Th}\right)\mathrm{sign}(\epsilon_t)X_{t-1}(u_0).\label{eq:a33}
\end{align}
By recalling the unit mean and variance of $z_t$, the right hand side of \eqref{eq:a33} is shown to have the same limit distribution as in \eqref{eq:lim_thm1}, conditional on $Y_1(u_0),...,Y_T(u_0)$.
This complete the proof. 
\end{proof}

\subsection{Proof of Proposition \ref{remarkA1}}

\begin{proof} Recall  that
\begin{align*}
&L_{t_0,T}\left(u_0,\mnu\right)=\sum^{T}_{t= p+1}
K\left(\frac{t-\intp[u_0T]}{Th}\right)
w_{t-1,T} \left\{\left|\epsilon_t-\frac{1}{\sqrt{Th}}\widetilde{\mnu}_{t,T}\left(u_0\right)^\top X_{t-1,T}\right|-\left|\epsilon_t\right|\right\}\\
&=\sum^{T}_{t= p+1}
K\left(\frac{t- \intp[u_0T]  }{Th}\right)
w_{t-1,T} \Bigl[-\frac{1}{\sqrt{Th}}\widetilde{\mnu}_{t,T}\left(u_0\right)^\top X_{t-1,T}\mathrm{sign}\left(\epsilon_t\right) \\ & +2\int^{\frac{1}{\sqrt{Th}}\widetilde{\mnu}_{t,T}\left(u_0\right)^\top X_{t-1,T}}_{0}\left\{\mathbb{I}\left(\epsilon_t\le s\right)-\mathbb{I}\left(\epsilon_t\le 0\right)\right\}ds \Bigr]\\
&=\sum^{T}_{t= p+1}
K\left(\frac{t- \intp[u_0T]}{Th}\right)
w_{t-1,T} \left[-\frac{1}{\sqrt{Th}}\widetilde{\mnu}_{t,T}\left(u_0\right)^\top X_{t-1,T}\mathrm{sign}\left(\epsilon_t\right)+\frac{f\left(0\right)}{Th}\left\{\widetilde{\mnu}_{t,T}\left(u_0\right)^\top X_{t-1,T}\right\}^2\right]+R_{1,T}\\
&:=\widetilde{\widetilde{L}}_{t_0,T}\left(u_0,\mnu\right)+R_{1,T}^{\left(1\right)}+R_{1,T}^{\left(2\right)},
\end{align*}
where
\begin{align*}
&R_{1,T}^{\left(1\right)}=\sum^{T}_{t= p+1}
K\left(\frac{t-\intp[u_0T]}{Th}\right)w_{t-1,T}\int^{\frac{1}{\sqrt{Th}}\widetilde{\mnu}_{t,T}\left(u_0\right)^\top X_{t-1,T}}_{0}s^2f\left(\overline{s}\right)ds,\\
&R_{1,T}^{\left(2\right)}=2\sum^{T}_{t= p+1}
K\left(\frac{t-\intp[u_0T]}{Th}\right)w_{t-1,T}\left\{\zeta_{t,T}\left(\widetilde{\mnu}_{t,T}\left(u_0\right)\right)-E\left(\zeta_{t,T}\left(\widetilde{\mnu}_{t,T}\left(u_0\right)\right)\mid\mathcal{F}_{t-1}\right)\right\}, 
\end{align*}
$\overline{s}\in\left(0,s\right)$ and
\begin{align*}
\zeta_{t,T}\left(\mnu\right):=\int^{\frac{1}{\sqrt{Th}}\mnu^\top X_{t-1,T}}_{0}\left\{\mathbb{I}\left(\epsilon_t\le s\right)-\mathbb{I}\left(\epsilon_t\le 0\right)\right\}ds.
\end{align*}
Furthermore, we have
\begin{align*}
&{\widetilde{\widetilde{L}}_{T}\left(u_0;\mnu\right)}={\widetilde{L}_{T}}\left(u_0;\mnu\right)+R_{2,T},
\end{align*}
where
\begin{align*}
&R_{2,T}:=\sum^{T}_{t= p+1}
K\left(\frac{t-\intp[u_0T]}{Th}\right)
w_{t-1,T} \left[-\frac{1}{\sqrt{Th}}\widetilde{\mnu}_{t,T}\left(u_0\right)^\top X_{t-1,T}\mathrm{sign}\left(\epsilon_t\right)+\frac{f\left(0\right)}{Th}\left\{\widetilde{\mnu}_{t,T}\left(u_0\right)^\top X_{t-1,T}\right\}^2\right]\\
&-\sum^{T}_{t= p+1}
K\left(\frac{t-\intp[u_0T]}{Th}\right)
w_{t-1}\left(\frac{t}{T}\right)\left[-\frac{1}{\sqrt{Th}}\widetilde{\mnu}_{t,T}\left(u_0\right)^\top X_{t-1}\left(\frac{t}{T}\right)\mathrm{sign}\left(\epsilon_t\right)+\frac{f\left(0\right)}{Th}\left\{\widetilde{\mnu}_{t,T}\left(u_0\right)^\top X_{t-1}\left(\frac{t}{T}\right)\right\}^2\right].
\end{align*}
Collecting all the above results, we obtain the decomposition 
\begin{align*}
&L_{\intp[u_0T],T}\left(u_0, v \right)=\widetilde{L}_{t_0,T}\left(u_0,\mnu\right)+R_{1,T}^{\left(1\right)}+R_{1,T}^{\left(2\right)}+R_{2,T}.
\end{align*}
From the same arguments used  for  equation  \eqref{eq:p23}, we have
\begin{align*}
&R_{1,T}^{\left(1\right)}=\sum^{T}_{t= p+1}
K\left(\frac{t-\intp[u_0T]}{Th}\right)w_{t-1,T}\int^{\frac{1}{\sqrt{Th}}\widetilde{\mnu}_{t,T}\left(u_0\right)^\top X_{t-1,T}}_{0}s^2f\left(\overline{s}\right)ds\\
&\le O_p\left(1+\sqrt{Th}h\right)\frac{1}{\left(Th\right)^{\frac{3}{2}}}\sum^{T}_{t= p+1}
K\left(\frac{t-\intp[u_0T]}{Th}\right)=O_p\left(\frac{1}{\sqrt{Th}}+h\right).
\end{align*}
In addition, 
\begin{align*}
&R_{1,T}^{\left(2\right)}=2\sum^{T}_{t= p+1}
K\left(\frac{t-\intp[u_0T]}{Th}\right)w_{t-1,T}\left\{\zeta_{t,T}\left(\widetilde{\mnu}_{t,T}\left(u_0\right)\right)-E\left(\zeta_{t,T}\left(\widetilde{\mnu}_{t,T}\left(u_0\right)\right)\mid\mathcal{F}_{t-1}\right)\right\}\\
&=2V_T\left(1+O_p\left(\sqrt{Th}h\right)\right),
\end{align*}
where
\begin{align*}
V_T:=\sum^{T}_{t= p+1}
K\left(\frac{t-{\intp[u_0T]}}{Th}\right)w_{t-1,T}\left\{\zeta_{t,T}\left(\mnu\right)-E\left(\zeta_{t,T}(\mnu)\mid\mathcal{F}_{t-1}\right)\right\}
\end{align*}
was shown to be 
\begin{align*}
V_T=O_p\left(\frac{1}{\left(Th\right)^{1/4}}\right)
\end{align*}
in the proof of Lemma \ref{lem:4}. 
However, with the additional conditions of Assumption \ref{LemmaA1}, this is improved to 
\begin{align*}
V_T=O_p\left(\frac{1}{\sqrt{Th}}\right),
\end{align*}
so that
\begin{align*}
&R_{1,T}^{\left(2\right)}=2V_T\left(1+O_p\left(\sqrt{Th}h\right)\right)=O_p\left(\frac{1}{\sqrt{Th}}\right)\left(1+O_p\left(\sqrt{Th}h\right)\right)=O_p\left(\frac{1}{\sqrt{Th}}+h\right).
\end{align*}
Recall that
\begin{align*}
&R_{2,T}:=\sum^{T}_{t= p+1}
K\left(\frac{t-{\intp[u_0T]}}{Th}\right)
w_{t-1,T} \left[-\frac{1}{\sqrt{Th}}\widetilde{\mnu}_{t,T}\left(u_0\right)^\top X_{t-1,T}\mathrm{sign}\left(\epsilon_t\right)+\frac{f\left(0\right)}{Th}\left\{\widetilde{\mnu}_{t,T}\left(u_0\right)^\top X_{t-1,T}\right\}^2\right]\\
&-\sum^{T}_{t= p+1}
K\left(\frac{t-{\intp[u_0T]}}{Th}\right)
w_{t-1}\left(\frac{t}{T}\right)\left[-\frac{1}{\sqrt{Th}}\widetilde{\mnu}_{t,T}\left(u_0\right)^\top X_{t-1}\left(\frac{t}{T}\right)\mathrm{sign}\left(\epsilon_t\right)+\frac{f\left(0\right)}{Th}\left\{\widetilde{\mnu}_{t,T}\left(u_0\right)^\top X_{t-1}\left(\frac{t}{T}\right)\right\}^2\right]\\
&:=R_{2,T}^{\left(1\right)}+R_{2,T}^{\left(2\right)}+R_{2,T}^{\left(3\right)}+R_{2,T}^{\left(4\right)},
\end{align*}
\begin{align*}
&R_{2,T}^{\left(1\right)}=-\frac{1}{\sqrt{Th}}\sum^{T}_{t= p+1}
K\left(\frac{t-{\intp[u_0T]}}{Th}\right)\left\{w_{t-1,T}-w_{t-1}\left(\frac{t}{T}\right)\right\}
\widetilde{\mnu}_{t,T}\left(u_0\right)^\top X_{t-1,T}\mathrm{sign}\left(\epsilon_t\right),\\
&R_{2,T}^{\left(2\right)}=\frac{f\left(0\right)}{Th}\sum^{T}_{t= p+1}
K\left(\frac{t-{\intp[u_0T]}}{Th}\right)\left\{w_{t-1,T}-w_{t-1}\left(\frac{t}{T}\right)\right\}
\left\{\widetilde{\mnu}_{t,T}\left(\frac{t}{T}\right)^\top X_{t-1,T}\right\}^2,\\
&R_{2,T}^{\left(3\right)}=-\frac{1}{\sqrt{Th}}\sum^{T}_{t= p+1}
K\left(\frac{t-{\intp[u_0T]}}{Th}\right)w_{t-1}\left(\frac{t}{T}\right)
\left[\widetilde{\mnu}_{t,T}\left(u_0\right)^\top  \left\{X_{t-1,T}-X_{t-1}\left(\frac{t}{T}\right)\right\}\right]\mathrm{sign}\left(\epsilon_t\right)
\end{align*}
and
\begin{align*}
R_{2,T}^{\left(4\right)} & =\frac{f\left(0\right)}{Th}\sum^{T}_{t= p+1}
K\left(\frac{t-{\intp[u_0T]}}{Th}\right)w_{t-1}\left(\frac{t}{T}\right)
\Bigl[\widetilde{\mnu}_{t,T}\left(u_0\right)^\top  \left\{X_{t-1,T}-X_{t-1}\left(\frac{t}{T}\right)\right\} \\ & \left\{X_{t-1,T}+X_{t-1}\left(\frac{t}{T}\right)\right\}^\top \widetilde{\mnu}_{t,T}\left(u_0\right)\Bigr].
\end{align*}
Since $E\left\{R_{2,T}^{\left(1\right)}\right\}=0$, $E\left\{R_{2,T}^{\left(3\right)}\right\}=0$ and using  Assumption \ref{LemmaA1} (see also the  proofs of \eqref{eq:B_{1,T}} and \eqref{eq:B_{2,T}}), we can show that 
\begin{align*}
R_{2,T}^{\left(i\right)}=O_p\left(\frac{1}{T}\right)
\end{align*}
for $i=1,2,3,4$.
\end{proof}

\subsection{Proof of Theorem \ref{theoremA1}}

Define the following useful quantities 
\begin{align*}
	&{H}_{t,T}(u; v)=-\frac{1}{\sqrt{Th}}v^\top X_{t-1}(u)\textrm{sign}(\epsilon_t)+\frac{f(0)}{Th}v^\top X_{t-1}(u)X^\top_{t-1}(u)v
\end{align*}
so that
\begin{align*}\small{
		{H}_{t,T}\left(\frac{t}{T}; {v}\right)={H}_{t,T}(u_0;{v})+\left(\frac{t}{T}-u_0\right)\frac{\partial {H}_{t,T}(u_0;{v})}{\partial u}+\frac{\left(\frac{t}{T}-u_0\right)^2}{2}\frac{\partial^2  {H}_{t,T}\left(u_0; {v}\right)}{\partial u^2}+\frac{\left(\frac{t}{T}-u_0\right)^3}{6}\frac{\partial^3 {H}_{t,T}\left(\overline{u}_1; {v}\right)}{\partial u^3}}
\end{align*}
and
\begin{align*}
	w_{t-1}\left(\frac{t}{T}\right)=w_{t-1}\left(u_0\right)+\left(\frac{t}{T}-u_0\right)\frac{\partial w_{t-1}\left(u_0\right)}{\partial u}+\frac{\left(\frac{t}{T}-u_0\right)^2}{2}\frac{\partial^2 w_{t-1}\left(u_0\right)}{\partial u^2}+\frac{\left(\frac{t}{T}-u_0\right)^3}{6}\frac{\partial^3 w_{t-1}\left(\overline{u}_2\right)}{\partial u^3},
\end{align*}
where $\overline{u}_1,\overline{u}_2\in\left(s,u\right)$. In addition,
\begin{align*}
	&\frac{\partial {H}_{t,T}\left(u; {v}\right)}{\partial u}=-\frac{1}{\sqrt{Th}}v^\top \frac{\partial X_{t-1}\left(u; {v}\right)}{\partial u}\mathrm{sign}(\epsilon_t)+\frac{2f(0)}{Th}v^\top \frac{\partial X_{t-1}(u)}{\partial u} X^{T}_{t-1}(u)v,\\
	&\frac{\partial^2 {H}_{t,T}(u;{v})}{\partial u^2}=-\frac{1}{\sqrt{Th}}v^\top \frac{\partial^2 X_{t-1}(u)}{\partial u^2}\mathrm{sign}(\epsilon_t)+\frac{2f(0)}{Th}v^\top \left\{\frac{\partial^2 X_{t-1}(u)}{\partial u^2} X_{t-1}(u)^{\top}+\frac{\partial X_{t-1}(u)}{\partial u}\frac{\partial X_{t-1}(u)}{\partial u}^{\top}\right\}v,
\end{align*}
\begin{align*}
	\frac{\partial w_{t-1}(u)}{\partial u} & =g'(X_{t-1}(u))^\top \frac{\partial X_{t-1}(u)}{\partial u},\\
	\frac{\partial^2 w_{t-1}(u)}{\partial u^2} & =g'(X_{t-1}(u))^\top \frac{\partial^2 X_{t-1}(u)}{\partial u^2}+\frac{\partial X_{t-1}(u)}{\partial u}^\top g''(X_{t-1}(u))\frac{\partial X_{t-1}(u)}{\partial u}
\end{align*}
and $\partial^3 {H}_{t,T}(u;{v})/\partial u^3$, 
$\partial^3 w_{t-1}(u)/\partial u^3$ satisfy the relationships in the same manner.

\begin{proof}
Recall  that {$\widetilde{v}_{t, T}(u_0)$ and $\mnu$ satisfy $\widetilde{v}_{t, T}(u_0)=[\mnu-\sqrt{T h}\{\beta(t/T)-\beta(u_0)\}]$. Then $\widetilde{L}_{T}(u_0;\mnu)$ is written as}
\begin{align*}
{\widetilde{L}_{T}(u_0;\mnu)} &=\sum^{T}_{t= p+1}
K\left(\frac{t-{\intp[u_0T]}}{Th}\right)
w_{t-1}\left(\frac{t}{T}\right) \Bigl[-\frac{1}{\sqrt{Th}}\widetilde{v}_{t, T}(u_0)^\top X_{t-1}\left(\frac{t}{T}\right)\mathrm{sign}(\epsilon_t) \\
& +\frac{f(0)}{Th}\left\{\widetilde{v}_{t, T}(u_0)^\top X_{t-1}\left(\frac{t}{T}\right)\right\}^2 \Bigr]\\
&=\sum^{T}_{t= p+1}
K\left(\frac{t-{\intp[u_0T]}}{Th}\right)
w_{t-1}\left(\frac{t}{T}\right) \Bigl[-\frac{1}{\sqrt{Th}}\mnu^\top X_{t-1}\left(\frac{t}{T}\right)\mathrm{sign}\left(\epsilon_t\right) \\
&+\frac{f\left(0\right)}{Th}\mnu^\top X_{t-1}\left(\frac{t}{T}\right)X_{t-1}\left(\frac{t}{T}\right)^\top \mnu\Bigr]+B_{1,T}\\
&=\sum^{T}_{t= p+1}
K\left(\frac{t-{\intp[u_0T]}}{Th}\right)
w_{t-1}\left(\frac{t}{T}\right) H_{t,T}\left(\frac{t}{T};{v}\right)+B_{1,T}\\
&:={\overline{L}_{T}}(u_0;\mnu)+B_{1,T},
\end{align*}
where
\begin{align*}
&B_{1,T}:=B_{1,T}^{\left(1\right)}+B_{1,T}^{\left(2\right)}+B_{1,T}^{\left(3\right)},
\end{align*}
\begin{align*}
&B_{1,T}^{\left(1\right)}=\sum^{T}_{t= p+1}
K\left(\frac{t-{\intp[u_0T]}}{Th}\right)
w_{t-1}\left(\frac{t}{T}\right)\left\{\beta\left(\frac{t}{T}\right)-\beta(u_0)\right\}^\top X_{t-1}\left(\frac{t}{T}\right)\mathrm{sign}\left(\epsilon_t\right),\\
&B_{1,T}^{\left(2\right)}=-\frac{2f\left(0\right)}{\sqrt{Th}}\sum^{T}_{t= p+1}
K\left(\frac{t-{\intp[u_0T]}}{Th}\right)
w_{t-1}\left(\frac{t}{T}\right) \left\{\beta\left(\frac{t}{T}\right)-\beta\left(u_0\right)\right\}^\top X_{t-1}\left(\frac{t}{T}\right)X^{T}_{t-1}\left(\frac{t}{T}\right) v
\end{align*}
\begin{align*}
&B_{1,T}^{\left(3\right)}=f\left(0\right)\sum^{T}_{t= p+1}
K\left(\frac{t-{\intp[u_0T]}}{Th}\right)
w_{t-1}\left(\frac{t}{T}\right) \left[\left\{\beta\left(\frac{t}{T}\right)-\beta\left(u_0\right)\right\}^\top X_{t-1}\left(\frac{t}{T}\right)\right]^2.
\end{align*}
Note that $B_{1,T}^{\left(1\right)}$ and $B_{1,T}^{\left(3\right)}$ do not depend on $v$.  We evaluate 
{$B_{1,T}^{(2)}$ as}
\begin{align}
&E\left(B_{1,T}^{\left(2\right)}\right)=\sqrt{Th}h^2 E\left\{b_{t}\left(u_0\right)\right\}\mnu \int K\left(x\right)x^2dx+O\left(\sqrt{Th}\|\mnu\|\left(h^3+\frac{1}{T}\right)\right)\label{eq:E(B_{1,T})}
\end{align}
and
\begin{align}
\mathrm{Var}\left(B_{1,T}^{\left(2\right)}\right)=O\left(h^2\right),\label{eq:B_{1,T}}
\end{align}
both of which are shown in in Sec. \ref{Sec:proofs for thm2}.  On the other hand
\begin{align*}
&{\overline{L}_{T}}(u_0;\mnu):=\sum^{T}_{t= p+1}
K\left(\frac{t-{\intp[u_0T]}}{Th}\right)
w_{t-1}\left(\frac{t}{T}\right) {H_{t,T}\left(\frac{t}{T};\mnu\right)}\\
&=\sum^{T}_{t= p+1}K\left(\frac{t-{\intp[u_0T]}}{Th}\right)w_{t-1}\left(u_0\right){H_{t,T}\left(u_0;\mnu\right)}+B_{2,T}\\
&:=\overline{\overline{L}}_{t_0,T}\left(u_0,\mnu\right)+B_{2,T},
\end{align*}
where
\begin{align*}
&B_{2,T}:=\sum^{T}_{t= p+1}K\left(\frac{t-{\intp[u_0T]}}{Th}\right)\left\{
w_{t-1}\left(\frac{t}{T}\right){H_{t,T}\left(\frac{t}{T};\mnu\right)}-
w_{t-1}\left(u_0\right){H_{t,T}(u_0; \mnu)}\right\}.
\end{align*}
But 
\begin{align}\label{eq:E(B_{2,T})}
&E\left(B_{2,T}\right)=h^2\mnu^\top E\left\{d_{t}^{\left(2\right)}\left(u_0\right)\right\}\mnu \int K\left(x\right)x^2dx+O\left(\|\mnu\|^2\left(h^3+\frac{1}{T}\right)\right)\\\nonumber
&=O\left(\left\|\mnu\right\|^2\left(h^2+\frac{1}{T}\right)\right)
\end{align}
and
\begin{align}\label{eq:B_{2,T}}
\mathrm{Var}\left(B_{2,T}\right)=O\left(h^2\right),
\end{align}
both of which are shown in Sec. \ref{Sec:proofs for thm2} and $d_{t}^{\left(2\right)}\left(u_0\right)$ is given by \eqref{dt}.
Summarizing 
\begin{align*}
&{\widetilde{L}_{T}\left(u_0;\mnu\right)}=\overline{\overline{L}}_{t_0,T}\left(u_0,\mnu\right)+B_{1,T}^{\left(2\right)}+B_{2,T}+B_T,
\end{align*}
where
\begin{align*}
B_T:=B_{1,T}^{\left(1\right)}+B_{1,T}^{\left(3\right)}
\end{align*}
does not depend on $\mnu$. In addition, it holds that 
\begin{align*}
E\left(B_{1,T}^{\left(2\right)}\right)=\sqrt{Th}h^2 E\left\{b_{t}\left(u_0\right)\right\}\mnu \int K\left(x\right)x^2dx+O\left(\sqrt{Th}\left\|\mnu\right\|\left(h^3+\frac{1}{T}\right)\right),
\end{align*}
\begin{align*}
E\left(B_{2,T}\right)=O\left(\left\|\mnu\right\|^2\left(h^3+\frac{1}{T}\right)\right),
\end{align*}
\begin{align*}
\mathrm{Var}\left(B_{2,T}\right)=O\left(h^2\right), \quad \mathrm{Var}\left(B_{1,T}^{\left(2\right)}\right)=O\left(h^2\right).
\end{align*}
Therefore 
\begin{align*}
&\overline{\overline{L}}_{t_0,T}(u_0;\mnu)=\sum^{T}_{t= p+1}K\left(\frac{t-{\intp[u_0T]}}{Th}\right)w_{t-1}(u_0){H_{t,T}(u_0; \mnu)}\\
&=\sum^{T}_{t= p+1}
K\left(\frac{t-{\intp[u_0T]}}{Th}\right)w_{t-1}(u_0)\left[-\frac{1}{\sqrt{Th}}\mnu^\top X_{t-1}(u_0)\mathrm{sign}(\epsilon_t)+\frac{f(0)}{Th}\{\mnu^\top X_{t-1}(u_0)\}^2\right]\\
&\dlim-\mnu^\top\sqrt{\kappa_2}\Omega\left(u_0\right)^{1/2}W+f(0)\mnu^\top \Sigma(u_0)\mnu.
\end{align*}
As a consequence, we have that 
\begin{align*}
\widetilde{L}_{T}(u_0;\mnu) = -\mnu^\top\left\{\sqrt{\kappa_2}\Omega\left(u_0\right)^{1/2}W-\sqrt{Th}h^2 E\left\{b_{t}\left(u_0\right)^\top\right\} \int K\left(x\right)x^2dx\right\}+f\left(0\right)\mnu^\top \Sigma\left(u_0\right)\mnu
+o_p(1),
\end{align*}
in which the minimum of {the leading term} is attained at
\begin{align*}
{\mnu=}\frac{\sqrt{\kappa_2}}{2f\left(0\right)}\Sigma\left(u_0\right)^{-1}\Omega\left(u_0\right)^{1/2}W-\sqrt{Th}h^2 \Sigma\left(u_0\right)^{-1}E\left\{b_{t}\left(u_0\right)^\top\right\} \frac{\int K\left(x\right)x^2dx}{2f\left(0\right)}.
\end{align*}
Hence,  by recalling \eqref{vQ_def},  as $T \rightarrow \infty$, 
\begin{align*}
{\sqrt{Th}\left[\widehat{\beta}^Q_T(u_0)-\beta(u_0)\right]}+\sqrt{Th}h^2 
\Sigma^{-1}\left(u_0\right) E\left\{b_{t}\left(u_0\right)^\top\right\} \frac{\int K\left(x\right)x^2dx}{2f\left(0\right)}\stackrel{d}{\to}N\left(0_p,\frac{\kappa_2}{4f(0)^2}
\Sigma^{-1}\left(u_0\right)\Omega\left(u_0\right)\Sigma^{-1}\left(u_0\right)\right).
\end{align*}
\end{proof}

\subsection{{Addendum for  proof of Theorem \ref{theoremA1}}}
\label{Sec:proofs for thm2}

\subsubsection*{Proof of \eqref{eq:E(B_{1,T})}}
Recall  that
\begin{align*}
&B_{1,T}^{\left(2\right)}=-\frac{2f(0)}{\sqrt{Th}}\sum^{T}_{t= p+1}
K\left(\frac{t-\intp[u_0T]}{Th}\right)
w_{t-1}\left(\frac{t}{T}\right) \left\{\beta\left(\frac{t}{T}\right)-\beta\left(u_0\right)\right\}^\top X_{t-1}\left(\frac{t}{T}\right)X_{t-1}^\top\left(\frac{t}{T}\right) \mnu\\
&=-\frac{2f\left(0\right)}{\sqrt{Th}}\sum^{T}_{t= p+1}
K\left(\frac{t-\intp[u_0T]}{Th}\right)
w_{t-1}\left(u_0\right) \left\{\beta\left(\frac{t}{T}\right)-\beta\left(u_0\right)\right\}^\top X_{t-1}\left(u_0\right)X^\top_{t-1}\left(u_0\right) \mnu\\
&+\frac{2f\left(0\right)}{\sqrt{Th}}\sum^{T}_{t= p+1}
K\left(\frac{t-\intp[u_0T]}{Th}\right)
\left\{w_{t-1}\left(u_0\right)-w_{t-1}\left(\frac{t}{T}\right)\right\} \left\{\beta\left(\frac{t}{T}\right)-\beta\left(u_0\right)\right\}^\top X_{t-1}\left(u_0\right)X^\top_{t-1}\left(u_0\right) \mnu\\
&+\frac{2f\left(0\right)}{\sqrt{Th}}\sum^{T}_{t= p+1}
K\left(\frac{t-\intp[u_0T]}{Th}\right)
w_{t-1}\left(u_0\right)\left\{\beta\left(\frac{t}{T}\right)-\beta\left(u_0\right)\right\}^\top \left\{X_{t-1}\left(u_0\right)X_{t-1}\left(u_0\right)^\top-X_{t-1}
\left(\frac{t}{T}\right)X^\top_{t-1}\left(\frac{t}{T}\right)\right\}\mnu\\
&+\frac{2f\left(0\right)}{\sqrt{Th}}\sum^{T}_{t= p+1}
K\left(\frac{t-\intp[u_0T]}{Th}\right)\left\{
w_{t-1}\left(\frac{t}{T}\right)-w_{t-1}\left(u_0\right) \right\}\left\{\beta\left(\frac{t}{T}\right)-\beta\left(u_0\right)\right\}^\top\\
&\cdot \left\{X_{t-1}\left(u_0\right)X_{t-1}\left(u_0\right)^\top-X_{t-1}\left(\frac{t}{T}\right)X^\top_{t-1}\left(\frac{t}{T}\right)\right\}\mnu\\
:=&B_{1,T}^{\left(2,1\right)}+B_{1,T}^{\left(2,2\right)}+B_{1,T}^{\left(2,3\right)}+B_{1,T}^{\left(2,4\right)},
\end{align*}
where
\begin{align*}
B_{1,T}^{\left(2,1\right)} & :=-\frac{2f\left(0\right)}{\sqrt{Th}}\sum^{T}_{t= p+1}
K\left(\frac{t-\intp[u_0T]}{Th}\right)
w_{t-1}\left(u_0\right) \left\{\beta\left(\frac{t}{T}\right)-\beta\left(u_0\right)\right\}^\top X_{t-1}\left(u_0\right)X^\top_{t-1}\left(u_0\right)\mnu\\
&=-\frac{2f\left(0\right)}{\sqrt{Th}}\sum^{T}_{t= p+1}\left(\frac{t}{T}-u_0\right)
K\left(\frac{t-\intp[u_0T]}{Th}\right)w_{t-1}\left(u_0\right) \beta^{\prime}\left(u_0\right)^\top X_{t-1}\left(u_0\right)X^\top_{t-1}\left(u_0\right)\mnu
\\
&-\frac{f\left(0\right)}{\sqrt{Th}}\sum^{T}_{t= p+1}\left(\frac{t}{T}-u_0\right)^2
K\left(\frac{t-\intp[u_0T]}{Th}\right)w_{t-1}\left(u_0\right) \beta^{\prime\prime}\left(u_0\right)^\top X_{t-1}\left(u_0\right)X^\top_{t-1}\left(u_0\right)\mnu\\
&+\frac{1}{\sqrt{Th}}\sum^{T}_{t= p+1}\left(\frac{t}{T}-u_0\right)^3
K\left(\frac{t-\intp[u_0T]}{Th}\right)r_{1,t,T}^{\left(2,1\right)}\mnu,
\end{align*}
\begin{align*}
B_{1,T}^{\left(2,2\right)} & :=\frac{2f\left(0\right)}{\sqrt{Th}}\sum^{T}_{t= p+1}
K\left(\frac{t-\intp[u_0T]}{Th}\right)
\left\{w_{t-1}\left(u_0\right)-w_{t-1}\left(\frac{t}{T}\right)\right\} \left\{\beta\left(\frac{t}{T}\right)-\beta\left(u_0\right)\right\}^\top X_{t-1}\left(u_0\right)X^\top_{t-1}\left(u_0\right) \mnu\\
&=\frac{2f\left(0\right)}{\sqrt{Th}}\sum^{T}_{t= p+1}\left(\frac{t}{T}-u_0\right)^2
K\left(\frac{t-\intp[u_0T]}{Th}\right)g^{\prime}\left(X_{t-1}\left(u_0\right)\right)^\top \frac{\partial X_{t-1}\left(u_0\right)}{\partial u} \beta^{\prime}\left(u_0\right)^\top X_{t-1}\left(u_0\right)X^\top_{t-1}\left(u_0\right) v\\
&+\frac{1}{\sqrt{Th}}\sum^{T}_{t= p+1}\left(\frac{t}{T}-u_0\right)^3
K\left(\frac{t-\intp[u_0T]}{Th}\right)r_{1,t,T}^{\left(2,2\right)}\mnu,
\end{align*}
\begin{align*}
B_{1,T}^{\left(2,3\right)} & :=\frac{2f\left(0\right)}{\sqrt{Th}}\sum^{T}_{t= p+1}
K\left(\frac{t-\intp[u_0T]}{Th}\right)
w_{t-1}\left(u_0\right) \left\{\beta\left(\frac{t}{T}\right)-\beta\left(u_0\right)\right\}^\top \Bigl\{X_{t-1}\left(u_0\right)X^\top_{t-1}\left(u_0\right) \\  
& -X_{t-1}\left(\frac{t}{T}\right)X^\top_{t-1}\left(\frac{t}{T}\right)\Bigr\}v \\
&=\frac{4f\left(0\right)}{\sqrt{Th}}\sum^{T}_{t= p+1}
\left(\frac{t}{T}-u_0\right)^2K\left(\frac{t-\intp[u_0T]}{Th}\right)w_{t-1}\left(u_0\right) \beta^{\prime}\left(u_0\right)^\top \frac{\partial X_{t-1}\left(u_0\right)}{\partial u}X^\top_{t-1}\left(u_0\right)\mnu\\
&+\frac{1}{\sqrt{Th}}\sum^{T}_{t= p+1}\left(\frac{t}{T}-u_0\right)^3
K\left(\frac{t-\intp[u_0T]}{Th}\right)r_{1,t,T}^{\left(2,3\right)}\mnu,
\end{align*}
\begin{align*}
B_{1,T}^{\left(2,4\right)} & :=\frac{2f\left(0\right)}{\sqrt{Th}}\sum^{T}_{t= p+1}
K\left(\frac{t-\intp[u_0T]}{Th}\right)\left\{
w_{t-1}\left(\frac{t}{T}\right)-w_{t-1}\left(u_0\right) \right\}\left\{\beta\left(\frac{t}{T}\right)-\beta\left(u_0\right)\right\}^\top\\
& \left\{X_{t-1}\left(u_0\right)X_{t-1}\left(u_0\right)^\top-X_{t-1}\left(\frac{t}{T}\right)X_{t-1}\left(\frac{t}{T}\right)^\top\right\}\mnu\\
&=\frac{1}{\sqrt{Th}}\sum^{T}_{t= p+1}\left(\frac{t}{T}-u_0\right)^3
K\left(\frac{t-\intp[u_0T]}{Th}\right)r_{1,t,T}^{\left(2,4\right)}\mnu.
\end{align*}
Then 
\begin{align*}
E\left\{\frac{1}{\sqrt{Th}}\sum^{T}_{t= p+1}\left(\frac{t}{T}-u_0\right)^3
K\left(\frac{t-\intp[u_0T]}{Th}\right)r_{1,t,T}^{\left(2,i\right)}\mnu\right\}
& \le \frac{Th}{\sqrt{Th}}\sup_t\left|E\left(r_{1,t,T}^{\left(2,i\right)}\right)\right|\mnu\frac{1}{Th}\sum^{T}_{t= p+1}\left|K\left(\frac{t-\intp[u_0T]}{Th}\right)\left(\frac{t}{T}-u_0\right)^3\right|\\
&\le \blue{C}\sqrt{Th}h^3 \sup_t\left|E\left(r_{1,t,T}^{\left(2,i\right)}\right)\right|\mnu\left\{\int \left|K\left(x\right)\right|dx+O\left(\frac{1}{T}\right)\right\} \\ 
&=O\left(\sqrt{Th}h^3\left\|\mnu\right\|\right),
\end{align*}
for $i=1,2,3,4$. Now, we rearange $B_{1,T}^{\left(2\right)}$ as
\begin{align*}
B_{1,T}^{\left(2\right)}=S_{1,T}^{\left(2,1\right)}+S_{1,T}^{\left(2,2\right)}+r_{1,T}^{\left(2\right)},
\end{align*}
where
\begin{align*}
r_{1,T}^{\left(2\right)}:=\sum^{T}_{t= p+1}K\left(\frac{t-\intp[u_0T]}{Th}\right)\left(\frac{t}{T}-u_0\right)^3\left(r_{1,t,T}^{\left(2,1\right)}+r_{1,t,T}^{\left(2,2\right)}+r_{1,t,T}^{\left(2,3\right)}+r_{1,t,T}^{\left(2,4\right)}\right),
\end{align*}
\begin{align*}
S_{1,T}^{\left(2,1\right)} & :=\sum^{T}_{t= p+1}K\left(\frac{t-\intp[u_0T]}{Th}\right)\left(\frac{t}{T}-u_0\right)\left\{-\frac{2f\left(0\right)}{\sqrt{Th}}w_{t-1}\left(u_0\right) \beta^{\prime}\left(u_0\right)^\top X_{t-1}\left(u_0\right)X^\top_{t-1}\left(u_0\right) \mnu\right\}\\
&:=\sum^{T}_{t= p+1}K\left(\frac{t-\intp[u_0T]}{Th}\right)\left(\frac{t}{T}-u_0\right)\left\{\frac{1}{\sqrt{Th}} a_{t}\left(u_0\right)\mnu\right\},
\end{align*}
\begin{align*}
S_{1,T}^{\left(2,2\right)} & :=\sum^{T}_{t= p+1}K\left(\frac{t-\intp[u_0T]}{Th}\right)\left(\frac{t}{T}-u_0\right)^2\frac{f\left(0\right)}{\sqrt{Th}}\left\{-w_{t-1}\left(u_0\right) \beta^{\prime\prime}\left(u_0\right)^\top X_{t-1}\left(u_0\right)X^\top_{t-1}\left(u_0\right)\right.\\
&\left.+2g^{\prime}\left(X_{t-1}\left(u_0\right)\right)^\top \frac{\partial X_{t-1}\left(u_0\right)}{\partial u} \beta^{\prime}\left(u_0\right)^\top X_{t-1}\left(u_0\right)X_{t-1}\left(u_0\right)^\top+4w_{t-1}\left(u_0\right) \beta^{\prime}\left(u_0\right)^\top \frac{\partial X_{t-1}\left(u_0\right)}{\partial u}X^\top_{t-1}\left(u_0\right)\right\}\mnu\\
&:=\sum^{T}_{t= p+1}K\left(\frac{t-\intp[u_0T]}{Th}\right)\left(\frac{t}{T}-u_0\right)^2\left\{\frac{1}{\sqrt{Th}} b_{t}\left(u_0\right)\mnu\right\}
\end{align*}
and
\begin{align*}
&a_{t}\left(u_0\right)=-2f\left(0\right)w_{t-1}\left(u_0\right) \beta^{\prime}\left(u_0\right)^\top X_{t-1}\left(u_0\right)X^\top_{t-1}\left(u_0\right),
\end{align*}
\begin{align*}
b_{t}\left(u_0\right) & =f\left(0\right)\Bigg\{-w_{t-1}\left(u_0\right) \beta^{\prime\prime}\left(u_0\right)^\top X_{t-1}\left(u_0\right)X^\top_{t-1}\left(u_0\right)\\
&+2g^{\prime}\left(X_{t-1}\left(u_0\right)\right)^\top \frac{\partial X_{t-1}\left(u_0\right)}{\partial u} \beta^{\prime}\left(u_0\right)^\top X_{t-1}\left(u_0\right)X^\top_{t-1}\left(u_0\right)+4w_{t-1}\left(u_0\right) \beta^{\prime}\left(u_0\right)^\top \frac{\partial X_{t-1}\left(u_0\right)}{\partial u}X^\top_{t-1}\left(u_0\right)\Bigg\}.\notag
\end{align*}
It follows that 
\begin{align*}
E\left(B_{1,T}^{\left(2\right)}\right) & =E\left(S_{1,T}^{\left(2,1\right)}+S_{1,T}^{\left(2,2\right)}+r_{1,T}^{\left(2\right)}\right)\\
&=\frac{1}{\sqrt{Th}} E\left\{a_{t}\left(u_0\right)\right\}\mnu\sum^{T}_{t= p+1}K\left(\frac{t-\intp[u_0T]}{Th}\right)\left(\frac{t}{T}-u_0\right) \\ 
&+\frac{1}{\sqrt{Th}} E\left\{b_{t}\left(u_0\right)\right\}\mnu\sum^{T}_{t= p+1}K\left(\frac{t-\intp[u_0T]}{Th}\right)\left(\frac{t}{T}-u_0\right)^2+O\left(\sqrt{Th}h^3\left\|\mnu\right\|\right)\\
&=\frac{Th}{\sqrt{Th}} E\left\{a_{t}\left(u_0\right)\right\}\mnu\left\{\int^{Ch}_{-Ch}\frac{1}{h}K\left(\frac{x}{h}\right)xdx+O\left(\frac{1}{T}\right)\right\} \\
&+\frac{Th}{\sqrt{Th}} E\left\{b_{t}\left(u_0\right)\right\}\mnu\left\{h^2\int K\left(x\right)x^2dx+O\left(\frac{1}{T}\right)\right\}+O\left(\sqrt{Th}h^3\left\|\mnu\right\|\right)\\
&=\sqrt{Th}h^2 E\left\{b_{t}\left(u_0\right)\right\}\mnu \int K\left(x\right)x^2dx+O\left(\sqrt{Th}\left\|\mnu\right\|\left(h^3+\frac{1}{T}\right)\right).
\end{align*}

\subsubsection{Proof of \eqref{eq:B_{1,T}}}

Applying  Assumption \ref{LemmaA1} we have that 
\begin{align*}
\mathrm{Var}\left(B_{1,T}^{\left(2\right)}\right) & =\mathrm{Var}\left(\frac{2f\left(0\right)}{\sqrt{Th}}\sum^{T}_{t= p+1}
K\left(\frac{t-\intp[u_0T]}{Th}\right)
w_{t-1}\left(\frac{t}{T}\right) \left\{\beta\left(\frac{t}{T}\right)-\beta\left(u_0\right)\right\}^\top X_{t-1}\left(\frac{t}{T}\right)X^\top_{t-1}\left(\frac{t}{T}\right) \mnu
\right)\\
&\le\frac{C_3h^2}{Th}\left(1+o\left(1\right)\right) 
\sum^{T}_{t_1,t_2= p+1} \Biggl[
K\left(\frac{t_1-\intp[u_0T]}{Th}\right)K\left(\frac{t_2-\intp[u_0T]}{Th}\right)\\ 
& \times  \left|\mathrm{Cov}\left(w_{t_1-1}\left(u_0\right)X_{t_1-1}\left(u_0\right)X_{t_1-1}\left(u_0\right),w_{t_2-1}
\left(u_0\right)X_{t_2-1}\left(u_0\right)X_{t_2-1}\left(u_0\right)\right)\right| \Biggr]\\
& =  O(h^2).
\end{align*}

\subsubsection{Proof of \eqref{eq:E(B_{2,T})}}
Recall that 
\begin{align*}
&B_{2,T}:=\sum^{T}_{t= p+1}K\left(\frac{t-\intp[u_0T]}{Th}\right)\left\{
w_{t-1}\left(\frac{t}{T}\right){H_{t,T}\left(\frac{t}{T}\right)}-
w_{t-1}\left(u_0\right){H_{t,T}(u_0)}\right\}\\
&:=B_{2,T}^{\left(1\right)}+B_{2,T}^{\left(2\right)}+B_{2,T}^{\left(3\right)},
\end{align*}
\begin{align*}
B_{2,T}^{\left(1\right)} & =\sum^{T}_{t= p+1}K\left(\frac{t-\intp[u_0T]}{Th}\right)\left\{
w_{t-1}\left(\frac{t}{T}\right)-w_{t-1}\left(u_0\right)\right\}{H}_{t,T}\left(u_0;{v}\right)\\
&=\sum^{T}_{t= p+1}K\left(\frac{t-\intp[u_0T]}{Th}\right)\left(\frac{t}{T}-u_0\right)\frac{\partial w_{t-1}\left(u_0\right)}{\partial u}{H}_{t,T}\left(u_0;{v}\right) \\ 
& +\frac{1}{2}\sum^{T}_{t= p+1}K\left(\frac{t-\intp[u_0T]}{Th}\right)\left(\frac{t}{T}-u_0\right)^2\frac{\partial^2 w_{t-1}\left(u_0\right)}{\partial u^2}{H}_{t,T}\left(u_0;{v}\right)+\sum^{T}_{t= p+1}K\left(\frac{t-\intp[u_0T]}{Th}\right)\left(\frac{t}{T}-u_0\right)^3r_{2,t,T}^{\left(1\right)},\\
B_{2,T}^{\left(2\right)} & =\sum^{T}_{t= p+1}K\left(\frac{t-\intp[u_0T]}{Th}\right)w_{t-1}\left(u_0\right)\left\{{H}_{t,T}\left(\frac{t}{T};{v}\right)
-{H}_{t,T}\left(u_0;{v}\right)\right\}\\
&=\sum^{T}_{t= p+1}K\left(\frac{t-\intp[u_0T]}{Th}\right)\left(\frac{t}{T}-u_0\right)w_{t-1}\left(u_0\right)\frac{\partial {H}_{t,T}\left(u_0;{v}\right)}{\partial u} \\ 
&+\frac{1}{2}\sum^{T}_{t= p+1}K\left(\frac{t-\intp[u_0T]}{Th}\right)\left(\frac{t}{T}-u_0\right)^2w_{t-1}\left(u_0\right)\frac{\partial^2 {H}_{t,T}\left(u_0;{v}\right)}{\partial u^2}
+\sum^{T}_{t= p+1}K\left(\frac{t-\intp[u_0T]}{Th}\right)\left(\frac{t}{T}-u_0\right)^3r_{2,t,T}^{\left(2\right)},\\
B_{2,T}^{\left(3\right)} & =\sum^{T}_{t= p+1}K\left(\frac{t-\intp[u_0T]}{Th}\right)\left\{w_{t-1}\left(\frac{t}{T}\right)-w_{t-1}\left(u_0\right)\right\}\left\{{H}_{t,T}\left(\frac{t}{T};{v}\right)-{H}_{t,T}\left(u_0;{v}\right)\right\}\\
&=\sum^{T}_{t= p+1}K\left(\frac{t-\intp[u_0T]}{Th}\right)\left(\frac{t}{T}-u_0\right)^2\frac{\partial w_{t-1}\left(u_0\right)}{\partial u}\frac{\partial {H}_{t,T}\left(u_0;{v}\right)}{\partial u}+\sum^{T}_{t= p+1}K\left(\frac{t-\intp[u_0T]}{Th}\right)\left(\frac{t}{T}-u_0\right)^3r_{2,t,T}^{\left(3\right)}
\end{align*}
and
\begin{align*}
&E\left\{\sum^{T}_{t= p+1}K\left(\frac{t-\intp[u_0T]}{Th}\right)\left(\frac{t}{T}-u_0\right)^3r_{2,t,T}^{\left(i\right)}\right\}\le Th\sup_t\left|E\left(r_{2,t,T}^{\left(i\right)}\right)\right|\frac{1}{Th}\sum^{T}_{t= p+1}\left|K\left(\frac{t-\intp[u_0T]}{Th}\right)\left(\frac{t}{T}-u_0\right)^3\right|\\
&\le C_2Th^4 \sup_t\left|E\left(r_{2,t,T}^{\left(i\right)}\right)\right|\left\{\int \left|K\left(x\right)\right|dx+O\left(\frac{1}{T}\right)\right\},
\end{align*}
for $i=1,2,3$.  But 
\begin{align*}
E\left\{\frac{\partial^j X_{t-1}\left(u\right)}{\partial u^j}\mathrm{sign}\left(\epsilon_t\right)\right\}=0
\end{align*}
for $j=0,1,2,3$, so 
\begin{align*}
E\left(r_{2,t,T}^{\left(i\right)}\right)=O\left(\frac{\left\|\mnu\right\|^2}{Th}\right)
\end{align*}
for $i=1,2,3$. Now, we re-express  $B_{2,T}$ as
\begin{align*}
B_{2,T}=S_{2,T}^{\left(1\right)}+S_{2,T}^{\left(2\right)}+r_{2,T},
\end{align*}
where
\begin{align*}
r_{2,T}:=\sum^{T}_{t= p+1}K\left(\frac{t-\intp[u_0T]}{Th}\right)\left(\frac{t}{T}-u_0\right)^3\left(r_{2,t,T}^{\left(1\right)}+r_{2,t,T}^{\left(2\right)}+r_{2,t,T}^{\left(3\right)}\right),
\end{align*}
\begin{align*}
S_{2,T}^{\left(1\right)} & :=\sum^{T}_{t= p+1}K\left(\frac{t-\intp[u_0T]}{Th}\right)\left(\frac{t}{T}-u_0\right)\left\{\frac{\partial w_{t-1}\left(u_0\right)}{\partial u}{H}_{t,T}\left(u_0;{v}\right)+w_{t-1}\left(u_0\right)\frac{\partial {H}_{t,T}\left(u_0;{v}\right)}{\partial u}\right\}\\
& =\sum^{T}_{t= p+1}K\left(\frac{t-\intp[u_0T]}{Th}\right)\left(\frac{t}{T}-u_0\right)\left\{\frac{1}{\sqrt{Th}}\mnu^\top c_{t}^{\left(1\right)}\left(u_0\right) +\frac{1}{Th}\mnu^\top c_{t}^{\left(2\right)}\left(u_0\right)\mnu\right\},
\end{align*}
\begin{align*}
S_{2,T}^{\left(2\right)} & :=\sum^{T}_{t= p+1}K\left(\frac{t-\intp[u_0T]}{Th}\right)\left(\frac{t}{T}-u_0\right)^2\frac{1}{2}
\Biggl\{ \frac{\partial^2 w_{t-1}\left(u_0\right)}{\partial u^2} {H}_{t,T}\left(u_0;{v}\right)+w_{t-1}\left(u_0\right)\frac{\partial^2 {H}_{t,T}\left(u_0;{v}\right)}{\partial u^2}\\
&+2\frac{\partial w_{t-1}\left(u_0\right)}{\partial u}\frac{\partial {H}_{t,T}\left(u_0;{v}\right)}{\partial u}
\Biggr \}\\
&=\sum^{T}_{t= p+1}K\left(\frac{t-\intp[u_0T]}{Th}\right)\left(\frac{t}{T}-u_0\right)^2\left\{\frac{1}{\sqrt{Th}}\mnu^\top d_{t}^{\left(1\right)}\left(u_0\right) +\frac{1}{Th}\mnu^\top d_{t}^{\left(2\right)}\left(u_0\right)\mnu\right\}
\end{align*}
and
\begin{align*}
c_{t}^{\left(1\right)}\left(u_0\right) & =-g^{\prime}\left(X_{t-1}\left(u_0\right)\right)^{\top}\frac{\partial X_{t-1}\left(u_0\right)}{\partial u}X_{t-1}\left(u_0\right)\mathrm{sign}\left(\epsilon_t\right)-g\left(X_{t-1}\left(u_0\right)\right)\frac{\partial X_{t-1}\left(u_0\right)}{\partial u}\mathrm{sign}\left(\epsilon_t\right)\\
&=-\left[g^{\prime}\left(X_{t-1}\left(u_0\right)\right)^{\top}\frac{\partial X_{t-1}\left(u_0\right)}{\partial u}X_{t-1}\left(u_0\right)+g\left(X_{t-1}\left(u_0\right)\right)\frac{\partial X_{t-1}\left(u_0\right)}{\partial u}\right]\mathrm{sign}\left(\epsilon_t\right),\\
c_{t}^{\left(2\right)}\left(u_0\right) & =f\left(0\right)g^{\prime}\left(X^\top_{t-1}\left(u_0\right)\right)\frac{\partial X_{t-1}\left(u_0\right)}{\partial u}X_{t-1}\left(u_0\right)X^\top_{t-1}\left(u_0\right)+2f\left(0\right)g\left(X_{t-1}\left(u_0\right)\right) \frac{\partial X_{t-1}\left(u_0\right)}{\partial u} X^\top_{t-1}\left(u_0\right)\\
&=f\left(0\right)\left[g^{\prime}\left(X_{t-1}\left(u_0\right)\right)^{\top}\frac{\partial X_{t-1}\left(u_0\right)}{\partial u}X_{t-1}\left(u_0\right)X^\top_{t-1}\left(u_0\right)+2g\left(X_{t-1}\left(u_0\right)\right) \frac{\partial X_{t-1}\left(u_0\right)}{\partial u} X^\top_{t-1}\left(u_0\right)\right],
\end{align*}
\begin{align*}
2d_{t}^{\left(1\right)}\left(u_0\right) & =-\left\{g^{\prime}\left(X_{t-1}\left(u_0\right)\right)^\top \frac{\partial^2 X_{t-1}\left(u_0\right)}{\partial u^2}+\frac{\partial X_{t-1}\left(u_0\right)}{\partial u}^\top g^{\prime\prime}\left(X_{t-1}\left(u_0\right)\right)\frac{\partial X_{t-1}\left(u_0\right)}{\partial u}\right\}X_{t-1}\left(u_0\right)\mathrm{sign}\left(\epsilon_t\right)\\
&-g\left(X_{t-1}\left(u_0\right)\right)\frac{\partial^2 X_{t-1}\left(u_0\right)}{\partial u^2}\mathrm{sign}\left(\epsilon_t\right)
-2g^{\prime}\left(X_{t-1}\left(u_0\right)\right)^{\top}\frac{\partial X_{t-1}\left(u_0\right)}{\partial u}\frac{\partial X_{t-1}\left(u_0\right)}{\partial u}\mathrm{sign}\left(\epsilon_t\right),
\end{align*}
\begin{align}
2d_{t}^{\left(2\right)}\left(u_0\right) & =f\left(0\right)\left\{g^{\prime}\left(X_{t-1}\left(u_0\right)\right)^\top \frac{\partial^2 X_{t-1}\left(u_0\right)}{\partial u^2}+\frac{\partial X_{t-1}\left(u_0\right)}{\partial u}^\top g^{\prime\prime}\left(X_{t-1}\left(u_0\right)\right)\frac{\partial X_{t-1}\left(u_0\right)}{\partial u}\right\}X_{t-1}\left(u_0\right)X^\top_{t-1}\left(u_0\right) \nonumber \\ 
&+2f\left(0\right)g\left(X_{t-1}\left(u_0\right)\right)\left\{\frac{\partial^2 X_{t-1}\left(u_0\right)}{\partial u^2} X_{t-1}\left(u_0\right)^{\top}+\frac{\partial X_{t-1}\left(u_0\right)}{\partial u}\frac{\partial X_{t-1}\left(u_0\right)}{\partial u}^{\top}\right\} \label{dt} \\
&+4f\left(0\right)g^{\prime}\left(X_{t-1}\left(u_0\right)\right)^{\top}\frac{\partial X_{t-1}\left(u_0\right)}{\partial u}\frac{\partial X_{t-1}\left(u_0\right)}{\partial u} X^\top_{t-1}\left(u_0\right).   \nonumber 
\end{align}
Since $E\left(c_{t}^{\left(1\right)}\left(u_0\right)\right)=0$ and $\left(d_{t}^{\left(1\right)}\left(u_0\right)\right)=0$, we can conclude that
\begin{align*}
E\left(B_{2,T}\right)  = & E\left(S_T^{\left(2,1\right)}+S_T^{\left(2,2\right)}+r_{2,T}\right)
=\frac{1}{Th}\mnu^\top E\left\{c_{t}^{\left(2\right)}\left(u_0\right)\right\}\mnu\sum^{T}_{t= p+1}K\left(\frac{t-\intp[u_0T]}{Th}\right)\left(\frac{t}{T}-u_0\right) \\ 
+& \frac{1}{Th}\mnu^\top E\left\{d_{t}^{\left(2\right)}\left(u_0\right)\right\}\mnu\sum^{T}_{t= p+1}K\left(\frac{t-\intp[u_0T]}{Th}\right)\left(\frac{t}{T}-u_0\right)^2+O\left(\left\|\mnu\right\|^2h^3\right)\\
= & \mnu^\top E\left\{c_{t}^{\left(2\right)}\left(u_0\right)\right\}\mnu\left\{\int^{Ch}_{-Ch}\frac{1}{h}K\left(\frac{x}{h}\right)xdx+O\left(\frac{1}{T}\right)\right\} \\
+ & \mnu^\top E\left\{d_{t}^{\left(2\right)}\left(u_0\right)\right\}\mnu\left\{h^2\int K\left(x\right)x^2dx+O\left(\frac{1}{T}\right)\right\}+O\left(\left\|\mnu\right\|^2h^3\right)\\
=& h^2\mnu^\top E\left\{d_{t}^{\left(2\right)}\left(u_0\right)\right\}\mnu \int K\left(x\right)x^2dx+O\left(\left\|\mnu\right\|^2\left(h^3+\frac{1}{T}\right)\right).
\end{align*}

\subsubsection{Proof of \eqref{eq:B_{2,T}}}
By applying Assumption \ref{LemmaA1},
\begin{align*}
&\mathrm{Var}\left\{\frac{1}{\sqrt{Th}}\sum^{T}_{t= p+1}K\left(\frac{t-\intp[u_0T]}{Th}\right)\left(\frac{t}{T}-u_0\right)\mnu^\top c_{t}^{\left(1\right)}\left(u_0\right)\right\}\\
&=\frac{1}{Th}\sum^{T}_{t_1,t_2= p+1}K\left(\frac{t_1-\intp[u_0T]}{Th}\right)\left(\frac{t_1}{T}-u_0\right)K\left(\frac{t_2-\intp[u_0T]}{Th}\right)\left(\frac{t_2}{T}-u_0\right)\mnu^{\top}\mathrm{Cov}\left(c_{t_1}^{\left(1\right)}\left(u_0\right),c_{t_2}^{\left(1\right)}\left(u_0\right)\right) \mnu\\
&=\mnu^{\top}\mathrm{Var}\left(c_{t}^{\left(1\right)}\left(u_0\right)\right) \mnu\frac{1}{Th}\sum^{T}_{t= p+1}K^2\left(\frac{t-\intp[u_0T]}{Th}\right)\left(\frac{t}{T}-u_0\right)^2=\mnu^{\top}\mathrm{Var}\left(c_{t}^{\left(1\right)}\left(u_0\right)\right) \mnu\left\{h^2\int K^2\left(x\right)x^2dx +O\left(\frac{1}{T}\right)\right\},
\end{align*}
\begin{align*}
&\mathrm{Var}\left\{\frac{1}{Th}\sum^{T}_{t= p+1}K\left(\frac{t-\intp[u_0T]}{Th}\right)\left(\frac{t}{T}-u_0\right)\mnu^\top c_{t}^{\left(2\right)}\left(u_0\right)\mnu\right\}\\
&=\frac{1}{\left(Th\right)^{2}}\sum^{T}_{t_1,t_2= p+1}K\left(\frac{t_1-\intp[u_0T]}{Th}\right)\left(\frac{t_1}{T}-u_0\right)K\left(\frac{t_2-\intp[u_0T]}{Th}\right)\left(\frac{t_2}{T}-u_0\right)\mnu^{\top}\mathrm{Cov}\left(c_{t_1}^{\left(2\right)}\left(u_0\right)\mnu,\mnu^{\top}c_{t_2}^{\left(2\right)}\left(u_0\right)\right) \mnu\\
&\le\frac{C_4h^2\left\|K\right\|_{\infty}}{\left(Th\right)^2}\sum^{T}_{t= p+1}K\left(\frac{t-\intp[u_0T]}{Th}\right)\sum_{s}\left|\mathrm{Cov}\left(c_{t}^{\left(2\right)}\left(u_0\right),c_{t+s}^{\left(2\right)}\left(u_0\right)\right)\right|=O\left(\frac{h}{T}\right)
\end{align*}
and
\begin{align*}
&\mathrm{Cov}\left\{\frac{1}{\sqrt{Th}}\sum^{T}_{t= p+1}K\left(\frac{t-\intp[u_0T]}{Th}\right)\left(\frac{t}{T}-u_0\right)\mnu^\top c_{t}^{\left(1\right)}\left(u_0\right),\frac{1}{Th}\sum^{T}_{t= p+1}K\left(\frac{t-\intp[u_0T]}{Th}\right)\left(\frac{t}{T}-u_0\right)\mnu^\top c_{t}^{\left(2\right)}\left(u_0\right)\mnu\right\}\\
&\le  \sqrt{ \prod_{i=1}^{2}\mathrm{Var}\left\{\frac{1}{\sqrt{Th}}\sum^{T}_{t= p+1}K\left(\frac{t-\intp[u_0T]}{Th}\right)\left(\frac{t}{T}-u_0\right)\mnu^\top c_{t}^{\left(i\right)}\left(u_0\right)\right\}} 
=O\left(\sqrt{\frac{h^3}{T}}\right),
\end{align*}
we have
\begin{align*}
\mathrm{Var}\left(S_{2,T}^{\left(1\right)}\right)=O\left(h^2\right)
\end{align*}
and similarly
\begin{align*}
\mathrm{Var}\left(S_{2,T}^{\left(2\right)}\right)=O\left(h^4\right)\quad\mbox{and}\quad\mathrm{Var}\left(r_{2,T}\right)=o\left(h^4\right).
\end{align*}

\subsection{Additional simulation results}

\subsubsection{Estimation} 

We also carried out the same simulation experiments for the data from tvAR(2) model 
\begin{align}
Y_{t,T}= 0.8 \sin(2 \pi t/T) Y_{t-1, T} + 0.2 \sin(2 \pi (t/T+0.1)) Y_{t-2, T} + \epsilon_{t}.\label{eq:AR2}
\end{align}
In this case,  in addition to the mean absolute error \eqref{mae}, 
we also calculate mean squared error defined as
\begin{align}
\mathrm{MSE} = \frac{1}{n} \sum_{i=1}^{n} \| \hat{\beta}(t_{i})-\beta(t_{i}) \|_{2}.\notag
\end{align}
The other settings are the same as in the tvAR(1) case.
The estimated mean squared errors are calculated in the similar way as \eqref{eq:est_mae}.
Tables \ref{tbl_MAE2} and \ref{tbl_MSE2} show the results in the tvAR(2) case.

\begin{table}[htbp]
\centering
\caption{Estimated mean absolute errors for the model \eqref{eq:AR2} based on 1000 simulations and using different sample sizes. Minimum in each line is indicated by boldface fonts.}\label{tbl_MAE2}
\medskip
\begin{tabular}{lccccccc}
$\epsilon_t\sim N(0,1)$ & L2 & LAD & LSW1c1 & LSW1c2 & LSW1q1 & LSW1q2 & LSW3\\\hline
$T=100$ 		& \textbf{0.4080} & 0.4362 & 0.4659 & 0.4241 & 0.4558 & 0.4793 & 0.4398 \\
$T=500$ 		& \textbf{0.1930} & 0.2214 & 0.3219 & 0.2399 & 0.2632 & 0.3091 & 0.2386 \\
$T=1000$ 	& \textbf{0.1404} & 0.1681 & 0.2900 & 0.1974 & 0.2152 & 0.2684 & 0.1886 \\\hline
$\epsilon_t\sim t_2$&  &  &  &  &  &  &  \\\hline
$T=100$ 		& 0.4131 & 0.3905 & 0.4464 & \textbf{0.3796} & 0.3959 & 0.4046 & 0.4224 \\
$T=500$ 		& 0.1990 & \textbf{0.1754} & 0.3059 & 0.2139 & 0.1844 & 0.2046 & 0.2039 \\
$T=1000$ 	& 0.1461 & \textbf{0.1229} & 0.2721 & 0.1739 & 0.1343 & 0.1566 & 0.1575 \\\hline
$\epsilon_t\sim $Cauchy &  &  &  &  &  &  &  \\\hline
$T=100$ 		& 0.4353 & 0.3810 & 0.4700 & 0.3853 & 0.3928 & \textbf{0.3794} & 9.2486 \\
$T=500$ 		& 0.2204 & 0.1808 & 0.3118 & 0.2174 & 0.1690 & \textbf{0.1589} & 0.2067 \\
$T=1000$ 	& 0.1616 & 0.1272 & 0.2715 & 0.1748 & 0.1119 & \textbf{0.1063} & 0.1475 \\\hline
\end{tabular}
\end{table}
\begin{table}[htbp]
\centering
\caption{Estimated mean squared errors for the model \eqref{eq:AR2} based on  1000 simulations and using different sample sizes. Minimum in each line is indicated by boldface fonts.} \label{tbl_MSE2}
\medskip
\begin{tabular}{lccccccc}
$\epsilon_t\sim N(0,1)$ & L2 & LAD & LSW1c1 & LSW1c2 & LSW1q1 & LSW1q2 & LSW3\\\hline
$T=100$ 		& \textbf{0.3172} & 0.3391 & 0.3712 & 0.3318 & 0.3586 & 0.3804 & 0.3424 \\
$T=500$ 		& \textbf{0.1507} & 0.1722 & 0.2589 & 0.1880 & 0.2083 & 0.2490 & 0.1845 \\
$T=1000$ 	& \textbf{0.1095} & 0.1303 & 0.2339 & 0.1550 & 0.1706 & 0.2169 & 0.1451 \\\hline
$\epsilon_t\sim t_2$&  &  &  &  &  &  &  \\\hline
$T=100$ 		& 0.3225 & 0.3043 & 0.3590 & \textbf{0.2997} & 0.3094 & 0.3187 & 0.3295 \\
$T=500$ 		& 0.1561 & \textbf{0.1382} & 0.2481 & 0.1702 & 0.1450 & 0.1623 & 0.1579 \\
$T=1000$ 	& 0.1143 & \textbf{0.0967} & 0.2213 & 0.1391 & 0.1053 & 0.1244 & 0.1212 \\\hline
$\epsilon_t\sim $Cauchy &  &  &  &  &  &  &  \\\hline
$T=100$ 		& 0.3465 & 0.3024 & 0.3825 & 0.3093 & 0.3064 & \textbf{0.2970} & 6.5867 \\
$T=500$ 		& 0.1766 & 0.1462 & 0.2545 & 0.1755 & 0.1338 & \textbf{0.1262} & 0.1612 \\
$T=1000$ 	& 0.1299 & 0.1035 & 0.2216 & 0.1415 & 0.0887 & \textbf{0.0844} & 0.1142 \\\hline
\end{tabular}
\end{table}

Overall, we observed the similar results as in the tvAR(1) case.

\clearpage

\subsubsection{Coverage probability of confidence region}
We next consider the model tvAR(2) model \eqref{eq:AR2}, and construct the confidence region for $\beta(u_0) = (\beta_{1}(u_0), \beta_{2}(u_0))^\top$. 
Throughout this subsection, we fix $u_0 = 0.5$.
For this purpose, define a criterion function
$\hat{C}_{T,M}(\beta) := Th(\beta-\hat{\beta}_T(u_0))^\top \hat{\Xi}_{T,M}^*(u_0)^{-1}(\beta-\hat{\beta}_T(u_0))$ $(\beta\in\R^2)$,
where $\hat{\beta}_T(u_0)$ is SWLADE based on an observed stretch $Y_{1,T},...,Y_{T,T}$, 
and $\hat{\Xi}_{T,M}^*(u_0)$ is the bootstrap asymptotic covariance matrix estimator of the asymptotic covariance matrix of $\sqrt{Th}(\hat{\beta}_T(u_0) - \beta(u_0))$ obtained by Theorem \ref{thm:BS}. 
Then, the $100(1-\delta)$\%-confidence region ($\delta\in(0,1)$) of $\beta(u_0)$ is calculated as
$\hat{R}_{T,M} := \left\{
b\in \R^2: \hat{C}_{T,M}(b)\leq q_{2,\delta}
\right\}$,
where $q_{2,\delta}$ is the upper $\delta$ quantile of $\chi^2_2$-distribution.
Table \ref{tbl:a2} shows the empirical coverage probability of confidence region $\hat{R}_{T,M}$ based on 1000 replications 
(i.e., $\sum^{1000}_{l=1}\mathbb{I}(\beta(u_0)\in\hat{R}_{T,M}^{(l)})/1000$, where
$\hat{R}_{T,M}^{(l)}$ is the confidence region in $l$th iteration)
with theoretical coverage probabilities 90\% and 95\%.
Generally, the performance of the proposed confidence region becomes better as sample size increases.
In particular, the empirical coverage probability is quite close to the true one when the error follows Cauchy distribution  and $T=1000$.
Then, we can see the robustness of our method even when the error distribution departs far from normality.

\begin{table}[htbp]
\centering
\caption{Empirical coverage probabilities of $\hat{R}_{T,M}$ for $\beta(u_0)$}\label{tbl:a2}
\medskip
\begin{tabular}{ccc}
\begin{tabular}{lcc}
	\multicolumn{3}{l}{$\epsilon_t\sim N(0,1)$}\\
	$T$ & 90\% & 95\%\\\hline
	100 & 0.879 & 0.927 \\
	500 & 0.878 & 0.933 \\
	1000 & 0.889 & 0.936 \\\hline
\end{tabular}
&
\begin{tabular}{lcc}
	\multicolumn{3}{l}{$\epsilon_t\sim t_2$}\\
	$T$ & 90\% & 95\% \\\hline
	100 & 0.887 & 0.933 \\
	500 & 0.902 & 0.940 \\
	1000 & 0.915 & 0.950 \\\hline
\end{tabular}
&
\begin{tabular}{lcc}
	\multicolumn{3}{l}{$\epsilon_t\sim$ Cauchy}\\
	$T$ & 90\% & 95\% \\\hline
	100 & 0.860 & 0.920 \\
	500 & 0.884 & 0.945 \\
	1000 & 0.902 & 0.947 \\\hline
\end{tabular}
\end{tabular}
\end{table}

\end{document}